\documentclass[12pt,leqno]{article}
\usepackage{amssymb, amscd, amsmath, amsthm}
\usepackage{dsfont}
\allowdisplaybreaks

\def\beq{\begin{equation}}
\def\eeq{\end{equation}}

\theoremstyle{definition}

\newtheorem{problem}{Problem}

\newtheorem*{rema}{Remark}

\theoremstyle{plain}
\newtheorem{theorem}{Theorem}
\newtheorem{claim}{Claim}
\newtheorem{fact}{Fact}
\newtheorem{lemma}{Lemma}

\newtheorem{corollary}{Corollary}
\newtheorem{proposition}{Proposition}

\numberwithin{equation}{section}
\numberwithin{proposition}{section}
\numberwithin{observation}{section}
\numberwithin{definition}{section}
\numberwithin{theorem}{section}
\numberwithin{problem}{section}
\numberwithin{example}{section}
\numberwithin{claim}{section}
\numberwithin{fact}{section}
\numberwithin{lemma}{section}
\numberwithin{conjecture}{section}
\numberwithin{corollary}{section}

\begin{document}

\title{On the maximum of the sum of the sizes of non-trivial cross-intersecting families}

\author{by P. Frankl
\\
R\'enyi Institute, Budapest, Hungary\
}

\date{}
\maketitle

\begin{abstract}
Let $n \geq 2k \geq 4$ be integers, ${[n]\choose k}$ the collection of $k$-subsets of $[n] = \{1, \ldots, n\}$.
Two families $\mathcal F, \mathcal G \subset {[n]\choose k}$ are said to be \emph{cross-intersecting} if $F \cap G \neq \emptyset$ for all $F \in \mathcal F$ and $G \in \mathcal G$.
A family is called non-trivial if the intersection of all its members is empty.
The best possible bound $|\mathcal F| + |\mathcal G| \leq {n \choose k} - 2 {n - k\choose k} + {n - 2k \choose k} + 2$ is established under the assumption that $\mathcal F$ and $\mathcal G$ are non-trivial and cross-intersecting.
For the proof a strengthened version of the so-called \emph{shifting technique} is introduced.
\end{abstract}

\section{Introduction}
\label{sec:1}

Let $n, k$ be integers, $n \geq 2k \geq 4$.
Let $[n] = \{1, \ldots, n\}$ be the standard $n$-element set, ${[n]\choose k}$ the collection of its $k$-subsets.
Subsets of ${[n]\choose k}$ are called \emph{$k$-graphs} or \emph{$k$-uniform} families.
A family $\mathcal F \subset {[n]\choose k}$ is called \emph{intersecting} if $F \cap F' \neq \emptyset$ for all $F, F'\in \mathcal F$.
Similarly, if $F \cap G \neq \emptyset$ for all $F \in \mathcal F$ and $G \in \mathcal G$ then $\mathcal F, \mathcal G \subset {[n]\choose k}$ are called \emph{cross-intersecting}.

For a family $\mathcal F$ set $\cap \mathcal F = \cap \{F : F \in \mathcal F\}$.
If $\cap \mathcal F = \emptyset$ then $\mathcal F$ is called \emph{non-trivial} and if $\cap \mathcal F \neq \emptyset$ then it is called a \emph{star}.
For $i \in [n]$ let $\mathcal S_i = \left\{S \in {[n]\choose k} : i \in S\right\}$ be the \emph{full star}, $k$ is understood from the context.
Note that $|\mathcal S_i| = {n - 1\choose k - 1}$.

Let us recall the Erd\H{o}s--Ko--Rado Theorem, one of the central results in extremal set theory.

\begin{theorem} [{\cite{EKR}}]
\label{th:1.1}
Suppose that $\mathcal F \subset {[n] \choose k}$, $n \geq 2k \geq 4$ and $\mathcal F$ is intersecting.
Then
\beq
\label{eq:1.1}
|\mathcal F| \leq {n - 1\choose k - 1}.
\eeq
\end{theorem}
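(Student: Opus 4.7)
The plan is to give Katona's cyclic permutation argument, which I find the cleanest proof of \eqref{eq:1.1} and which avoids shifting entirely.

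First, fix a cyclic arrangement $\pi$ of $[n]$ on a circle. Call a $k$-subset an \emph{arc} of $\pi$ if its elements occupy $k$ consecutive positions on the circle. The heart of the proof is the following combinatorial bound: for every $\pi$, the number of arcs of $\pi$ that lie in $\mathcal{F}$ is at most $k$. To see this, pick any arc $A \in \mathcal{F}$ (if no arc of $\pi$ lies in $\mathcal{F}$ we are done). Among the $2(k-1)$ other arcs of $\pi$ that meet $A$, the one starting $i$ positions clockwise from $A$ (with $1 \leq i \leq k-1$) is disjoint from the one starting $i$ positions counterclockwise from the opposite endpoint; this produces $k-1$ pairs of disjoint arcs. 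The hypothesis $n \geq 2k$ is precisely what guarantees that such a pair really is disjoint on the whole cycle, since otherwise the two arcs would wrap around and overlap. Because $\mathcal{F}$ is intersecting, at most one arc from each pair belongs to $\mathcal{F}$, giving a total of at most $1 + (k-1) = k$.

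Next, I double count pairs $(F, \pi)$ with $F \in \mathcal{F}$ and $F$ an arc of $\pi$. There are $(n-1)!$ cyclic arrangements, each contributing at most $k$ admissible arcs, hence at most $k(n-1)!$ such pairs. In the other direction, a fixed $F \in {[n] \choose k}$ is an arc in exactly $k!(n-k)!$ arrangements (choose the internal cyclic order of $F$ and of $[n]\setminus F$). Equating the two counts gives
\[
|\mathcal{F}| \cdot k!(n-k)! \;\leq\; k\,(n-1)!,
\]
which simplifies directly to $|\mathcal{F}| \leq {n-1 \choose k-1}$.

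The only delicate point is the ``at most $k$ arcs per cycle'' lemma, and in particular the role of the hypothesis $n \geq 2k$: if $n$ were smaller, the paired arcs would start to overlap through the complementary side of the cycle and the pairing would no longer certify disjointness. Everything else is routine counting. A natural alternative would be the shifting proof, reducing to a left-compressed $\mathcal{F}$ and showing that every member then contains the element $1$; that would also serve as a warm-up for the strengthened shifting technique the paper intends to develop for its main theorem.
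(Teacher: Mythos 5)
The paper does not actually prove Theorem~\ref{th:1.1}; it is stated as a classical result with a citation to \cite{EKR} and used as background, so there is no ``paper proof'' to compare against. Your argument is the standard Katona cycle (circle) method, and it is a complete and correct proof of \eqref{eq:1.1}. The double count is set up properly: each of the $(n-1)!$ cyclic orders contributes at most $k$ members of $\mathcal{F}$ among its arcs, and each $k$-set is an arc in exactly $k!\,(n-k)!$ cyclic orders, which indeed yields $|\mathcal{F}| \le \binom{n-1}{k-1}$. Your description of exactly which two of the $2(k-1)$ arcs meeting a fixed $A\in\mathcal F$ are paired together is worded loosely --- the cleanest formulation is to pair the arc shifted $i$ positions clockwise with the arc shifted $k-i$ positions counterclockwise, so that the two together occupy a block of $2k$ consecutive positions --- but the essential facts you use (there are $k-1$ pairs, each pair is disjoint precisely because $n\ge 2k$, and an intersecting family contains at most one arc from each pair) are all correct. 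Worth noting, as you do yourself at the end, that the paper's own methodology throughout is shifting (and a strengthened ``shifting ad extremis'' variant), so while Katona's argument is elegant and avoids compression entirely, a compression-based proof of EKR would be more in the spirit of what follows in Sections~\ref{sec:2}--\ref{sec:4}.
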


The full star shows that \eqref{eq:1.1} is best possible.

The Hilton--Milner Theorem shows in a strong way that for $n > 2k$ only full stars achieve equality in \eqref{eq:1.1}.

\begin{theorem}[\cite{HM}]
\label{th:1.2}
Suppose that $\mathcal F \subset  {[n]\choose k}$, $n > 2k \geq 4$, $\mathcal F$ is intersecting and non-trivial.
Then
\beq
\label{eq:1.2}
|\mathcal F| \leq {n - 1\choose k - 1} - {n - k - 1\choose k - 1} + 1.
\eeq
\end{theorem}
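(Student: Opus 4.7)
The plan is to prove Theorem~\ref{th:1.2} via the shifting technique. For $1 \leq i < j \leq n$, recall that the shift $S_{ij}$ replaces each $F \in \mathcal F$ with $j \in F$, $i \notin F$ and $(F \setminus \{j\}) \cup \{i\} \notin \mathcal F$ by $(F \setminus \{j\}) \cup \{i\}$. Every shift preserves both $|\mathcal F|$ and the intersecting property, but it can collapse a non-trivial family into a star. To cope with this, I would run a restricted shifting process that applies $S_{ij}$ only when the result remains non-trivial, iterating until no admissible shift remains. Denote the resulting family by $\mathcal F^*$; it is still intersecting, non-trivial, and $|\mathcal F^*| = |\mathcal F|$.

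Assume first that $\mathcal F^*$ is fully shifted. By non-triviality there is $F_0 \in \mathcal F^*$ with $1 \notin F_0$, and shiftedness forces the minimum such $F_0$ in colex order to be $\{2, 3, \ldots, k+1\}$. Partition $\mathcal F^* = \mathcal F_1 \cup \mathcal F_2$ according to whether the set contains $1$. Every $F \in \mathcal F^*$ meets $F_0$, so
\[
|\mathcal F_1| \leq \binom{n-1}{k-1} - \binom{n-k-1}{k-1}.
\]
If $\mathcal F_2 = \{F_0\}$, the claimed bound is immediate. Otherwise, by shiftedness of $\mathcal F^*$ restricted to $\binom{[2,n]}{k}$, the second-smallest shifted member $G := \{2, \ldots, k, k+2\}$ also lies in $\mathcal F_2$. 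Every $F \in \mathcal F_1$ must now meet $G$ as well, so each set of the form $\{1, k+1\} \cup T$ with $T \subset [k+3, n]$ and $|T| = k-2$ is excluded from $\mathcal F_1$: it meets $F_0$ through $k+1$ but misses $G$. Building an injection from the surplus $\mathcal F_2 \setminus \{F_0\}$ into such forbidden configurations---using the shiftedness of $\mathcal F^*$ to enlarge the pool of excluded sets whenever $\mathcal F_2$ grows---yields the matching inequality $|\mathcal F_1| + |\mathcal F_2| \leq \binom{n-1}{k-1} - \binom{n-k-1}{k-1} + 1$.

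Finally, I would address the residual case where the modified shifting halts because every remaining $S_{ij}$ would trivialize $\mathcal F^*$. A blocked pair $(i,j)$ imposes a rigid local structure: every set in $\mathcal F^*$ meets $\{i, j\}$, and every $F \in \mathcal F^*$ with $j \in F$, $i \notin F$ has its partner $(F \setminus \{j\}) \cup \{i\}$ already present in $\mathcal F^*$. A counting argument parallel to the one above then yields the same bound. The principal obstacle is the injection in the main case: one must track which members of $\mathcal F_1$ are killed by each new element of $\mathcal F_2$ and verify the distinctness of these exclusions across different $G$'s. This is precisely where the hypothesis $n > 2k$ enters, and the sharper form of the shifting technique announced in the abstract is presumably designed to streamline this bookkeeping for the more delicate cross-intersecting version.
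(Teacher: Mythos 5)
The paper does not actually give a proof of Theorem~\ref{th:1.2}: it is the Hilton--Milner Theorem, stated and attributed to \cite{HM}, and the author recalls it only as background (a short bijective proof along the lines you sketch appears in the cited \cite{F4}). There is therefore no in-paper proof to compare against, so let me evaluate your proposal on its own.

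Your outline has two genuine gaps, and they are the whole theorem. First, the injection from $\mathcal F_2 \setminus \{F_0\}$ into sets of $\mathcal S_1$ that meet $[2,k+1]$ but are forbidden from $\mathcal F_1$ is asserted but never constructed, and it is not routine: once $\mathcal F_2$ grows large, the exclusion regions caused by different members of $\mathcal F_2$ overlap heavily, so a naive ``one new forbidden set per new $G$'' bookkeeping does not obviously go through (try $\mathcal F_2$ close to a full star at $2$ inside $[2,n]$: each new member of $\mathcal F_2$ forbids an enormous but largely repeated slab of $\mathcal S_1$, and separating out genuinely new exclusions is the crux). The map that does work is of the type used in Section~\ref{sec:5}: send $G \mapsto G \,\Delta\, [2p(G)]$ where $p(G)$ is the largest $p$ with $|G\cap[2p]|\geq p$; for a shifted intersecting family with $n>2k$ one has $p(G)\geq 1$ for all $G$, so the image lies in $\mathcal S_1$, is disjoint from $\mathcal F$ by shiftedness, and one checks injectivity directly. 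Your sketch gestures toward something like this without giving it, and the phrase ``using shiftedness to enlarge the pool of excluded sets'' is exactly where the argument has to live.

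Second, your process of restricted shifting does \emph{not} necessarily terminate in a fully shifted family: it terminates when every remaining $S_{ij}$ would trivialize $\mathcal F^*$, which is an honest second case, not a residual one. The paper's Fact~\ref{fact:2.1} records what a blocked pair $(i,j)$ forces (all members meet $\{i,j\}$ and $\mathcal F(i)\cap\mathcal F(j)=\emptyset$), but converting that local rigidity into the target bound is precisely what Section~\ref{sec:4} spends most of its length doing in the harder cross-intersecting setting; it is not ``a counting argument parallel to the one above.'' As written, both halves of your proposal are placeholders for the argument rather than the argument itself.
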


For $k \neq 3$ the only family providing equality in \eqref{eq:1.2} is the Hilton--Milner Family,
$\mathcal H(n, k) = \left\{H \in {[n]\choose k} : 1 \in H, [2, k + 1] \cap H \neq \emptyset \right\} \cup \{[2, k + 1]\}$.
For $k = 3$ the \emph{triangle family} $\mathcal T(n, k) = \left\{T \in {[n]\choose k} : |T \cap [3]| \geq 2\right\}$ is the only other family attaining the bound \eqref{eq:1.2}.

By now there are dozens of papers proving and reproving results related to these basic theorems (\cite{A}, \cite{AK}, \cite{B}, \cite{D}, \cite{F2}, \cite{F4}, \cite{FF}, \cite{FK}, \cite{FT}, \cite{HK}, \cite{KZ}, \cite{M}, \cite{P}, etc.).

The author might lack modesty, but he pretends to have found a closely related natural question that has not been investigated before.

\setcounter{problem}{2}
\begin{problem}
\label{pr:1.4}
Let $\mathcal F, \mathcal G \subset {[n]\choose k}$ be non-trivial cross-intersecting families.
Determine or estimate
\beq
\label{eq:1.4}
h(n, k) := \max \bigl\{|\mathcal F| + |\mathcal G|\bigr\}.
\eeq
\end{problem}

The construction that we propose is really simple.
Let $\mathcal A = \bigl\{A_1, A_2\bigr\}$ where $A_1, A_2 \in {[n]\choose k}$ are disjoint.
Set
$$
\mathcal B = \left\{ B \in {[n]\choose k} : B \cap A_i \neq \emptyset, \ i = 2\right\}.
$$

Clearly,
$|\mathcal B| = {n\choose k} - 2{n - k\choose k} + {n - 2k\choose k}$.
Note that for $k$ fixed and $n \to \infty$,
$$
|\mathcal B| = k^2 {n - 2\choose k - 2} + O\left({n - 3\choose k - 3}\right).
$$

\setcounter{theorem}{3}

\begin{theorem}
\label{th:1.5}
Let $n > 2k \geq 4$ be integers.
Then
\beq
\label{eq:1.5}
h(n, k) = 2 + {n\choose k} - 2{n - k\choose k} + {n - 2k\choose k},
\eeq
moreover for $k \geq 3$ up to automorphism the above example is unique.
\end{theorem}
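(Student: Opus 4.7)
The plan is to reduce to the case of shifted (left-compressed) $\mathcal F, \mathcal G$ via a carefully controlled shifting procedure, and then extract the bound from the rigid structure of shifted non-trivial cross-intersecting pairs. Standard shifts $S_{ij}$ preserve $|\mathcal F|+|\mathcal G|$ and the cross-intersection property but may destroy non-triviality, so the central technical tool will be a \emph{strengthened} shifting that also protects non-triviality of both families.

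First I would dispose of the sub-case $|\mathcal G|=2$ directly: non-triviality forces $\mathcal G=\{G_1,G_2\}$ with $G_1\cap G_2=\emptyset$, and then $\mathcal F\subseteq\{F:F\cap G_1\neq\emptyset,\ F\cap G_2\neq\emptyset\}$, of size $\binom{n}{k}-2\binom{n-k}{k}+\binom{n-2k}{k}$ by inclusion--exclusion. This yields the claimed bound with equality, and pins down the extremal configuration, in this sub-case. Henceforth I may assume $|\mathcal F|\geq|\mathcal G|\geq 3$. I would then apply $S_{ij}$ to both families simultaneously whenever this preserves non-triviality of each. The dangerous moment is when some $S_{ij}$ would turn, say, $\mathcal F$ into a star centred at $i$: this forces every $F\in\mathcal F$ with $i\notin F$ to contain $j$, while at least one such $F$ must exist, a very rigid structural condition. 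The strengthened shifting lemma should thus be a dichotomy: either a modified operation does the shift while keeping both families non-trivial (and preserving sum and cross-intersection), or the original pair already satisfies $|\mathcal F|+|\mathcal G|<2+\binom{n}{k}-2\binom{n-k}{k}+\binom{n-2k}{k}$ by a direct argument from the rigid structure.

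After iterating, we may assume both $\mathcal F$ and $\mathcal G$ are shifted, non-trivial and cross-intersecting. The standard observation that a shifted non-trivial family contains $[2,k+1]$ then gives $[2,k+1]\in\mathcal F\cap\mathcal G$, so every set in $\mathcal F\cup\mathcal G$ meets $[2,k+1]$. Splitting each family by whether $1\in F$, the ``$1$-parts'' $\mathcal F_1,\mathcal G_1$ lie in $\mathcal S_1$ and meet $[2,k+1]$, each of size at most $\binom{n-1}{k-1}-\binom{n-k-1}{k-1}$. The ``non-$1$-parts'' $\mathcal F_0,\mathcal G_0$ are cross-intersecting on $[2,n]$ and each must additionally meet the $(k-1)$-shadows of $\mathcal G_1$ and $\mathcal F_1$ respectively. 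Combining these constraints, with $|\mathcal G|\geq 3$ providing the extra slack needed to beat the target, should yield a strict inequality in this case, completing the bound.

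The hard part will be the strengthened shifting lemma. The delicate point is that the extremal construction $(\mathcal A,\mathcal B)$ is itself destroyed by standard shifting (shifting $\mathcal A=\{A_1,A_2\}$ merges it into a star), so the modified procedure cannot simply discard such pairs --- it must recognise the rigid ``one shift from a star'' situation and in that case invoke the direct inclusion--exclusion bound above. Uniqueness for $k\geq 3$ then follows by tracing equality through the reduction: every non-trivial inequality used collapses to the $|\mathcal G|=2$ sub-case, and for $k\geq 3$ two disjoint $k$-subsets determine the pair up to an automorphism of $[n]$; the exclusion $k=2$ is genuine because triangle-type configurations such as $\mathcal F=\mathcal G=\{\{1,2\},\{1,3\},\{2,3\}\}$ attain the bound without matching the displayed example.
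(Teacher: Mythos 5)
Your disposal of the sub-case $|\mathcal G|=2$ is correct and is indeed the heart of the extremal configuration (if $\mathcal G$ is non-trivial with two members they must be disjoint, and inclusion--exclusion pins everything down). The trouble lies in the ``strengthened shifting lemma'' you posit. You claim a dichotomy: either a \emph{modified} shift preserves non-triviality of both families, or the pair is ``one shift from a star'' and the inclusion--exclusion bound applies directly. Neither horn of this dichotomy is available in the form you describe.

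On the first horn: there is no modified version of $S_{ij}$ that preserves $|\mathcal F|+|\mathcal G|$, cross-intersection, \emph{and} non-triviality of both families whenever the ordinary shift would produce a star. When $S_{ij}(\mathcal F)$ becomes a star centred at $i$, the obstruction is built into $\mathcal F$ itself ($\mathcal F(\overline i,\overline j)=\emptyset$ and $\mathcal F(i)\cap\mathcal F(j)=\emptyset$); there is no local fix. What the paper does instead is define \emph{shifting ad extremis}: perform $S_{ij}$ only when it preserves non-triviality of both families, revisit failed pairs, and iterate until, for every pair $(i,j)$, one of three possibilities holds --- both families are fixed by $S_{ij}$, or $S_{ij}(\mathcal F)$ is a star, or $S_{ij}(\mathcal G)$ is a star. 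The resulting pair is generally \emph{not} shifted, and most of the proof (all of Section~4's Case~A and Case~B) is spent analysing this partially-shifted situation.

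On the second horn: knowing $S_{ij}(\mathcal F)$ is a star tells you only that every $F\in\mathcal F$ meets $\{i,j\}$ and that $\mathcal F(i)\cap\mathcal F(j)=\emptyset$. This does not force $|\mathcal F|=2$ or put you in the inclusion--exclusion situation; $\mathcal F$ can be very large. The paper handles these pairs by a genuine case split: Case~A, where some such pair still has $\mathcal F(\overline i,\overline j)$ non-trivial, is dispatched by induction on $k+\ell$ using the old Frankl--Tokushige inequality; Case~B, where $\mathcal F(\overline i,\overline j)$ is always a star, leads to a rigid structural analysis (the set $Z=\{i,j,x,y\}$, Facts~4.8--4.11, Proposition~4.15) and a second application of the induction hypothesis to $\mathcal F(\overline j)$ and $\mathcal G(j)$. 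None of this collapses to ``recognise the rigid situation and apply inclusion--exclusion.'' Finally, your treatment of the fully shifted case (splitting by whether $1\in F$) is too optimistic as stated: proving the shifted-families bound $g(n,k,\ell)$ (Theorem~3.3) requires an averaging/LP argument over full chains in $[k+1]$, and then showing $g(n,k,\ell)<h(n,k,\ell)$ for all $n>k+\ell$, $k\ge 3$ is a separate (non-obvious) transversal count, not a consequence of the simple shadow constraints you sketch.
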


Let us recall the \emph{shifting partial order} that can be traced back to Erd\H{o}s, Ko and Rado \cite{EKR}.
For two $k$-sets $A = \{x_1, \ldots, x_k\}$ and $B = \{y_1, \ldots, y_k\}$ where $x_1 < x_2 < \ldots < x_k$,
$y_1 < \ldots < y_k$ we say that $A$ \emph{precedes} $B$ and denote it by $A \prec B$ if $x_i \leq y_i$ for all
$1 \leq i \leq k$.
A family $\mathcal F \subset {[n]\choose k}$ is called \emph{initial} or \emph{shifted} if $A \prec B$ and $B \in \mathcal F$ always imply $A \in \mathcal F$.

Note that the full star $\mathcal S_1$ and the Hilton--Milner Family as well as $\mathcal T(n, k)$ and many other important families are initial.

Erd\H{o}s, Ko and Rado invented the shifting operator (cf.\ definition below) that maintains the size of a family along the intersection or cross-intersection properties.
It might destroy non-triviality, but there are certain ways to circumvent this difficulty (cf.\ \cite{FF}).

In great contrast the families $\mathcal A$ and $\mathcal B$ defined above are \emph{not} initial.
In fact the answer for initial families is completely different.
Define $\mathcal P = {[k + 1]\choose k}$, $\mathcal R = \left\{R \in {[n]\choose k} : |R \cap [k + 1]| \geq 2\right\}$.
It is easy to check that $\mathcal P$ and $\mathcal R$ are non-trivial and cross-intersecting.

\begin{theorem}
\label{th:1.6}
Suppose that $\mathcal F, \mathcal G \subset {[n]\choose k}$ are non-trivial, cross-intersecting initial families, then for $n \geq 2k \geq 4$,
\beq
\label{eq:1.6}
|\mathcal F| + |\mathcal G| \leq k + 1 + \sum_{2 \leq i \leq k} {k + 1\choose i} {n - k - 1\choose k - i}.
\eeq
\end{theorem}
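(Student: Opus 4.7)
The proof proceeds by induction on $n$. First, the \emph{initial} and \emph{non-trivial} hypotheses yield $\mathcal P \subseteq \mathcal F \cap \mathcal G$ and $\mathcal F \cup \mathcal G \subseteq \mathcal R$: each family contains the $\prec$-minimal $k$-set avoiding $1$, namely $[2,k+1]$, and since $\{A : A \prec [2,k+1]\} = \binom{[k+1]}{k} = \mathcal P$, this forces $\mathcal P \subseteq \mathcal F \cap \mathcal G$; dually, cross-intersection with the $k+1$ sets $[k+1] \setminus \{j\} \in \mathcal P \subseteq \mathcal G$ forces $|F \cap [k+1]| \geq 2$ for every $F \in \mathcal F$. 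After rearranging, the target bound is equivalent to
\[
|(\mathcal F \cap \mathcal G) \setminus \mathcal P| \leq |\mathcal R \setminus (\mathcal F \cup \mathcal G)|,
\]
and cross-intersection supplies the natural matching material: any $Y \in \mathcal R$ disjoint from some $X \in \mathcal F \cap \mathcal G$ automatically lies outside both $\mathcal F$ and $\mathcal G$. The base case $n=2k$ is immediate: Vandermonde collapses the right-hand side of \eqref{eq:1.6} to $\binom{2k}{k}$, and cross-intersection in $\binom{[2k]}{k}$ already gives $|\mathcal F|+|\mathcal G| \leq \binom{2k}{k}$ via the complementation $F \mapsto [2k]\setminus F$.

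For the inductive step $n>2k$, split each family on the element $n$: write $\mathcal F = \mathcal F_0 \sqcup \{F \cup \{n\} : F \in \mathcal F_1\}$ with $\mathcal F_0 \subseteq \binom{[n-1]}{k}$ and $\mathcal F_1 \subseteq \binom{[n-1]}{k-1}$, and likewise for $\mathcal G$. The pair $(\mathcal F_0, \mathcal G_0)$ is initial, non-trivial (each part still contains $\mathcal P$) and cross-intersecting in $\binom{[n-1]}{k}$, so the inductive hypothesis gives $|\mathcal F_0|+|\mathcal G_0| \leq \mathrm{RHS}(n-1,k)$. A Pascal-identity computation yields $\mathrm{RHS}(n,k) - \mathrm{RHS}(n-1,k) = |\mathcal R''|$, where $\mathcal R'' := \{Y \in \binom{[n-1]}{k-1} : |Y\cap[k+1]| \geq 2\}$, so the induction closes provided
\[
|\mathcal F_1| + |\mathcal G_1| \leq |\mathcal R''|.
\]
Containment $\mathcal F_1, \mathcal G_1 \subseteq \mathcal R''$ follows from $\mathcal F \cup \mathcal G \subseteq \mathcal R$. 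Moreover, if $Y \in \mathcal F_1, Z \in \mathcal G_1$ were disjoint then --- since $|Y \cup Z| \leq 2k-2 < n-1$ --- one could pick distinct $i, j \in [n-1]\setminus(Y\cup Z)$ and lift, via shiftedness ($Y\cup\{i\} \in \mathcal F_0$, $Z\cup\{j\} \in \mathcal G_0$), to a violation of the cross-intersection of $\mathcal F_0, \mathcal G_0$. Hence $\mathcal F_1, \mathcal G_1$ are themselves cross-intersecting in $\binom{[n-1]}{k-1}$.

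The main obstacle is the auxiliary inequality $|\mathcal F_1| + |\mathcal G_1| \leq |\mathcal R''|$. Crucially, $\mathcal F_1, \mathcal G_1$ are not arbitrary shifted cross-intersecting families inside $\mathcal R''$: via the shadow inclusion $\mathcal F_0 \supseteq \{Y \cup \{i\} : Y \in \mathcal F_1,\, i \in [n-1]\setminus Y\}$ (and symmetrically for $\mathcal G$) they inherit substantial rigidity from the original problem. I would attack the inequality either by a parallel induction on $k$ --- the base $k=2$ is trivial since then $\mathcal R'' = \emptyset$ --- exploiting the shadow inclusion together with a layer-by-layer control on $\mathcal F_1 \cap \mathcal G_1$ stratified by $|Y \cap [k+1]|$, or by constructing an explicit injection $(\mathcal F_1 \cap \mathcal G_1) \hookrightarrow \mathcal R'' \setminus (\mathcal F_1 \cup \mathcal G_1)$ from canonical disjoint complements within $\mathcal R''$. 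Making this injection genuinely injective under the ``$|Y \cap [k+1]| \geq 2$'' constraint --- which is not preserved under naive lex-smallest disjoint choice --- is the delicate technical point I would expect to be the crux of the proof.
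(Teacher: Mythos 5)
Your setup is essentially correct and takes a genuinely different route from the paper: the paper does not induct on $n$ at all. Instead it traces both families to $[k+1]$, sets $\alpha(P)=|\mathcal F(P,\overline{[k+1]\setminus P})|/\binom{n-k-1}{k-|P|}$ and likewise $\beta(Q)$, proves $\alpha(P)+\beta(Q)\le 1$ for disjoint $P,Q\subseteq[k+1]$ via Sperner's shadow theorem (Lemma~\ref{lem:2.4}), couples complementary pairs with a monotonicity inequality (Lemma~\ref{lem:2.5}), and closes by averaging over all disjoint pairs of $i$-sets in $[k+1]$. Your first-paragraph observations ($\mathcal P\subseteq\mathcal F\cap\mathcal G$, $\mathcal F\cup\mathcal G\subseteq\mathcal R$), the base case at $n=2k$ via Vandermonde and complementation, and the bookkeeping identity $\mathrm{RHS}(n,k)-\mathrm{RHS}(n-1,k)=|\mathcal R''|$ are all correct, as is the fact that $\mathcal F_1,\mathcal G_1$ are shifted, cross-intersecting on $[n-1]$, and contained in $\mathcal R''$.

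However, there is a genuine gap: the auxiliary inequality $|\mathcal F_1|+|\mathcal G_1|\le|\mathcal R''|$ --- which you yourself call ``the crux of the proof'' --- is never established. You gesture at two possible strategies (a parallel induction on $k$, or an explicit injection from $\mathcal F_1\cap\mathcal G_1$ into $\mathcal R''\setminus(\mathcal F_1\cup\mathcal G_1)$) but carry out neither, and you explicitly flag the injectivity issue under the constraint $|Y\cap[k+1]|\ge 2$ as unresolved. This is not a routine lemma: it does not follow from Lemma~\ref{lem:2.4} (which only gives the weaker $|\mathcal F_1|+|\mathcal G_1|\le\binom{n-1}{k-1}$) nor from cross-intersection plus containment in $\mathcal R''$ by any standard argument I can see, and whether the additional shadow/shiftedness rigidity you invoke suffices is exactly the nontrivial work that would need to be done. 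As written, the induction does not close. The paper's chain-averaging argument sidesteps this difficulty entirely by working globally on $[k+1]$ rather than peeling off the last vertex; if you want to salvage your approach, you would need a self-contained proof of $|\mathcal F_1|+|\mathcal G_1|\le|\mathcal R''|$, and I would expect that to be at least as hard as the original problem.
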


Note that for fixed $k$ and $n \to \infty$ the RHS is ${k + 1\choose 2} {n - 2\choose k - 2} + O\left({n - 3\choose k - 3}\right)$, i.e., it is asymptotic to $\frac{k + 1}{2k} h(n, k)$.

For the proof of Theorem \ref{th:1.5} we need the following old result.

\begin{theorem}[\cite{FT}]
\label{th:1.7}
Let $k \geq \ell > 0$ be integers.
Suppose that $n \geq k + \ell$, $\mathcal F \subset {[n]\choose k}$, $\mathcal G \subset {[n]\choose \ell}$ and the families $\mathcal F, \mathcal G$ are non-empty and cross-intersecting.
Then
\beq
\label{eq:1.7}
|\mathcal F| + |\mathcal G| \leq {n\choose k} - {n - \ell \choose k} + 1.
\eeq
Moreover unless $n = k + \ell$ or $k = \ell = 2$ the equality is strict for $|\mathcal G| > 1$.
\end{theorem}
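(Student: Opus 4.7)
The plan is to apply the standard shifting technique to reduce to the case where $\mathcal F$ and $\mathcal G$ are initial; the $\prec$-minimum of a shifted non-empty $\mathcal G$ is $[\ell]$, which forces $\mathcal F$ into the star on $[\ell]$ and immediately gives the bound when $|\mathcal G|=1$; for $|\mathcal G|>1$, the extra ``$+1$'' saving and the strict inequality of the ``moreover'' clause will come from an injection sending each additional element of $\mathcal G$ to a distinct $k$-set that lies in the star on $[\ell]$ but cannot belong to $\mathcal F$.

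\emph{Shifting and the star bound.} I would first apply the shift operators $S_{ij}$, $1\le i<j\le n$, to $\mathcal F$ and $\mathcal G$ in parallel. These preserve both sizes, non-emptiness, and the cross-intersection property, so we may assume $\mathcal F$ and $\mathcal G$ are both shifted. Then $[\ell]\in\mathcal G$, cross-intersection forces $F\cap[\ell]\neq\emptyset$ for every $F\in\mathcal F$, and hence $|\mathcal F|\le\binom{n}{k}-\binom{n-\ell}{k}$. Combined with $|\mathcal G|=1$ this already yields \eqref{eq:1.7} at equality, realised by $\mathcal G=\{[\ell]\}$ and $\mathcal F$ the full star on $[\ell]$.

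\emph{The ``$+1$'' saving.} For $|\mathcal G|\ge 2$ I would build an injection $\phi\colon \mathcal G\setminus\{[\ell]\}\hookrightarrow \{F\in\binom{[n]}{k}:F\cap[\ell]\neq\emptyset\}\setminus\mathcal F$; its existence gives $|\mathcal F|+|\mathcal G|-1\le\binom{n}{k}-\binom{n-\ell}{k}$, the strict form of \eqref{eq:1.7}. For $G\in\mathcal G\setminus\{[\ell]\}$ the set $[\ell]\setminus G$ is non-empty, so any $k$-subset of $[n]\setminus G$ that contains $[\ell]\setminus G$ lies in the star on $[\ell]$ and, being disjoint from $G$, cannot lie in $\mathcal F$ by cross-intersection; the natural candidate is $\phi(G):=([\ell]\setminus G)\cup H_G$ with $H_G$ a canonical $(k-|[\ell]\setminus G|)$-subset of $[\ell+1,n]\setminus G$. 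The hypothesis $n\ge k+\ell$ is exactly what ensures $|[\ell+1,n]\setminus G|\ge k-|[\ell]\setminus G|$ so that $H_G$ exists.

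\emph{Main obstacle and equality cases.} The hard part will be verifying injectivity of $\phi$: while $G\cap[\ell]=[\ell]\setminus\phi(G)$ is immediately recoverable, the tail $G\cap[\ell+1,n]$ also has to be readable from $H_G$, which requires a choice of $H_G$ tied to the shifted structure of $\mathcal G$ (for instance, the lex-smallest admissible one). A cleaner alternative is induction on $n$: the base $n=k+\ell$ is immediate because $G\mapsto[n]\setminus G$ injects $\mathcal G$ into $\binom{[n]}{k}\setminus\mathcal F$, giving $|\mathcal F|+|\mathcal G|\le\binom{k+\ell}{k}$; for $n>k+\ell$ one splits each family by membership of $n$ and uses the fact that for shifted $\mathcal F$ the link $\mathcal F(n)$ sits inside the shadow $\Delta\mathcal F(\bar n)$. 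The exceptions $n=k+\ell$ and $k=\ell=2$ in the ``moreover'' clause correspond to the degenerate regimes where the saving $\binom{n-\ell-1}{k-1}$ normally extracted from a second element of $\mathcal G$ collapses; for instance when $k=\ell=2$ the configuration $\mathcal F=\mathcal G=\mathcal S_1$ achieves $|\mathcal F|+|\mathcal G|=2(n-1)=\binom{n}{2}-\binom{n-2}{2}+1$ with $|\mathcal G|=n-1>1$.
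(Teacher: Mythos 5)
Your overall strategy — shift to initial families, note $[\ell]\in\mathcal G$ forces $\mathcal F$ into the star on $[\ell]$, then find an injection from $\mathcal G\setminus\{[\ell]\}$ into the star minus $\mathcal F$ — is precisely the framework of the paper's proof in Section~\ref{sec:5}. But the entire content of that proof lies in actually constructing such an injection, and this is exactly the step you leave open.

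The candidate $\phi(G)=([\ell]\setminus G)\cup H_G$ with $H_G$ the lex-smallest $(k-|[\ell]\setminus G|)$-subset of $[\ell+1,n]\setminus G$ is not injective. Take $k=\ell=3$, $n$ large, $G=\{1,4,100\}$, $G'=\{1,4,101\}$: both have $[\ell]\setminus G=[\ell]\setminus G'=\{2,3\}$ and both give $H_G=H_{G'}=\{5\}$, so $\phi(G)=\phi(G')=\{2,3,5\}$. The difficulty you flag (``the tail $G\cap[\ell+1,n]$ also has to be readable from $H_G$'') is real and is not resolved by any canonical greedy choice supported only on $[\ell+1,n]\setminus G$: whenever two members of $\mathcal G$ agree on $[\ell]$ and their tails avoid a common small prefix of $[\ell+1,n]$, they collide. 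The paper's injection $\varphi(G)=G\,\Delta\,[2p(G)+k-\ell]$, with $p(G)$ the largest $p$ satisfying $|G\cap[2p+k-\ell]|\ge p$, is built precisely to avoid this: the cut-point $2p(G)+k-\ell$ moves with $G$, and the value $p(G)$ is recoverable from $\varphi(G)$ by counting how much of an initial segment it fills, which is what makes $\varphi$ injective. Proving $\varphi(G)\notin\mathcal F$ then needs a separate shiftedness argument (replacing the tail of $G$ by consecutive even shifts and the tail of $\varphi(G)$ by consecutive odd shifts to produce disjoint sets in $\mathcal G$ and $\mathcal F$). None of this is a cosmetic refinement of your $\phi$; it is a genuinely different construction, and without it the proof does not close. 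Your fallback (``a cleaner alternative is induction on $n$'') is also only a gesture: the split by membership of $n$ plus a shadow inequality is a plausible route but you have not set up the induction hypothesis, handled the non-emptiness constraint under restriction, nor said how the strictness clause and the two exceptional regimes would propagate through it. As it stands the proposal identifies the right skeleton but is missing the key idea.
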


Since in \cite{FT} uniqueness was not proven, we provide a full proof of \eqref{eq:1.7}.
This proof uses no computation what the reader might find nice.

Let us recall some standard notations.
For subsets $A,\, B$ define $\mathcal F(A) = \{ F \setminus A : A \subset F \in \mathcal F\}$,
$\mathcal F(\overline{B}) = \left\{F \in \mathcal F: F \cap \overline B = \emptyset \right\}$ and if
$A \cap B = \emptyset$ then set $\mathcal F(A, \overline B) = \left\{F \setminus A : A \subset F \in \mathcal F, F \cap \overline B = \emptyset\right\}$.
If $A = \{i\}$, $B = \{j\}$ then we use the shorthand notations $\mathcal F(i)$, $\mathcal F(\overline j)$, $\mathcal F(i, \overline j)$, etc.
Let us mention that $\mathcal F(A, \overline B)$ is a family on the ground set $[n]\setminus (A \cup B)$.
The notation $\mathcal F(A, A \cup B)$ is also quite common for disjoint sets $A, B$.
Note that $\mathcal F(A, A \cup B) = \mathcal F(A, \overline B)$ in this case.

\section{Shifting and some more tools}
\label{sec:2}

For a family $\mathcal F \subset {[n]\choose k}$ and integers $1 \leq i \neq j \leq n$ one defines the shifting (operator) $S_{i   j}$ by $$
S_{i   j}(\mathcal F) = \left\{S_{i   j}(F) : F \in \mathcal F \right\}
$$
where
$$
S_{i   j}(F) = \begin{cases}
F' := (F \setminus \{j\}) \cup \{i\}\ &\text{ if }\ i \notin F,\ j \in F \ \text{ and } \ F' \notin \mathcal F,\\
F &\text{ otherwise.}
\end{cases}
$$

Note that $\left|S_{i   j} (\mathcal F)\right| = |\mathcal F|$ and $S_{i   j}(\mathcal F) \subset {[n]\choose k}$.
It is well known (cf.\ \cite{F3}) that if $\mathcal F, \mathcal G$ are cross-intersecting, then $S_{i  j}(\mathcal F)$ and $S_{i   j}(\mathcal G)$ are also.

In the present paper we are mostly dealing with non-trivial families.
However it might happen that $\mathcal F$ is non-trivial but $S_{i  j}(\mathcal F)$ is a star.
The next statement is easy to prove.

\begin{fact}[cf. e.g.\ \cite{FW}]
\label{fact:2.1}
Suppose that $\mathcal F \subset {[n]\choose k}$ is non-trivial but $S_{i   j}(\mathcal F)$ is a star.
Then $S_{i   j}(\mathcal F) \subset \mathcal S_i$ and

\phantom{i}{\rm (i)} \ $\mathcal F(i) \cap \mathcal F(j) = \emptyset$,

{\rm (ii)} $F \cap \{i, j\} \neq \emptyset$ for all $F \in \mathcal F$.
\end{fact}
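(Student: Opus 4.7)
The plan is to exploit two structural features of the shifting operator $S_{ij}$: it only ever alters the incidence of $i$ and $j$ in a set, never touching any other coordinate; and it never adds $j$ to a set (it only replaces $j$ by $i$, never the reverse). These two observations are enough to pin down the center of the star and deliver both (i) and (ii) with essentially no computation.

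First I would locate the star's center. Write $S_{ij}(\mathcal F) \subset \mathcal S_m$ for some $m \in [n]$. If $m \notin \{i,j\}$, then because every $S_{ij}$-image has the same intersection with $[n] \setminus \{i,j\}$ as its preimage, we would get $\mathcal F \subset \mathcal S_m$, contradicting non-triviality. The case $m = j$ is ruled out by the second observation above: $S_{ij}$ never introduces $j$, so any set in $S_{ij}(\mathcal F)$ that contains $j$ already contained $j$ in $\mathcal F$, which would force $\mathcal F \subset \mathcal S_j$. Hence $m = i$, i.e.\ $S_{ij}(\mathcal F) \subset \mathcal S_i$. Conclusion (ii) then falls out immediately: for any $F \in \mathcal F$, the image $S_{ij}(F) \in \mathcal S_i$ is either $F$ itself (so $i \in F$) or equals $(F \setminus \{j\}) \cup \{i\}$ after a genuine shift (so $j \in F$), and in either case $F \cap \{i,j\} \neq \emptyset$.

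For (i) I would argue by contradiction. Suppose some $A \subset [n] \setminus \{i,j\}$ lies in $\mathcal F(i) \cap \mathcal F(j)$, so that both $A \cup \{i\}$ and $A \cup \{j\}$ belong to $\mathcal F$. The crucial point is the ``blocking'' clause in the definition of $S_{ij}$: since $A \cup \{i\}$ is already a member of $\mathcal F$, the operator leaves $A \cup \{j\}$ unchanged, so $A \cup \{j\}$ itself appears in $S_{ij}(\mathcal F)$. But this set avoids $i$, contradicting the inclusion $S_{ij}(\mathcal F) \subset \mathcal S_i$ just established.

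There is no real obstacle here; the only subtlety worth highlighting is that ``$S_{ij}(F) = F$'' must be read carefully, since a set containing $j$ but not $i$ can still be fixed by the operator precisely when its would-be image is already in $\mathcal F$, and this is exactly the mechanism that drives the contradiction in (i).
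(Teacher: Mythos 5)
Your proof is correct. The paper itself gives no argument for Fact~\ref{fact:2.1} (it states that it is ``easy to prove'' and cites~\cite{FW}), so there is nothing to compare against; but your approach is exactly the standard one. You correctly identify the key preliminary step, namely pinning the center of the star to $i$: a shift never touches coordinates outside $\{i,j\}$ (ruling out $m \notin \{i,j\}$), and it never introduces $j$ (ruling out $m = j$). Once $S_{ij}(\mathcal F) \subset \mathcal S_i$ is in hand, (ii) follows by inspecting the two cases $S_{ij}(F) = F$ and $S_{ij}(F) = (F\setminus\{j\})\cup\{i\}$, and (i) follows from the blocking clause: if $A\cup\{i\}$ and $A\cup\{j\}$ were both in $\mathcal F$, then $A\cup\{j\}$ would be fixed by $S_{ij}$ and would survive into $S_{ij}(\mathcal F)$ while avoiding $i$. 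Your remark about reading ``$S_{ij}(F)=F$'' carefully is well placed; that is precisely the mechanism that makes the contradiction in (i) go through.
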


The cross-intersecting families $\mathcal F, \mathcal G \subset {[n]\choose k}$ are called \emph{saturated} if they cease to be cross-intersecting upon the addition of any new $k$-set $H \in {[n]\choose k}$.

For a family $\mathcal F \subset {[n]\choose k}$ and a positive integer $\ell$ define
$$
\mathcal T^{(\ell)}(\mathcal F) = \left\{H \in {[n]\choose \ell} : H \cap F \neq \emptyset\ \forall F \in \mathcal F\right\}.
$$
With this notation $\mathcal F$ and $\mathcal G$ are saturated iff $\mathcal F = \mathcal T^{(k)}(\mathcal G)$ and $\mathcal G = \mathcal T^{(k)}(\mathcal F)$.

\setcounter{claim}{1}
\begin{claim}
\label{cl:2.2}
Suppose that $\mathcal F, \mathcal G \subset {[n]\choose k}$ are cross-intersecting and saturated and $F \cap \{i, j\} \neq \emptyset$ for all $F \in \mathcal F$ (equivalently $\{i, j\} \in \mathcal T^{(2)}(\mathcal F)$).
Then $\mathcal S_{ij} = \left\{ S \in {[n]\choose k} :\{i, j\} \subset S\right\} \subset \mathcal G$. \hfill $\square$
\end{claim}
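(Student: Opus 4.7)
The plan is essentially to unpack the saturation hypothesis and observe the claim is immediate. The paper has just noted that a cross-intersecting saturated pair satisfies $\mathcal G = \mathcal T^{(k)}(\mathcal F)$, i.e., $\mathcal G$ consists of \emph{every} $k$-subset of $[n]$ that meets each member of $\mathcal F$. So to prove $\mathcal S_{ij} \subset \mathcal G$ I would fix an arbitrary $S \in \mathcal S_{ij}$ and just verify membership in $\mathcal T^{(k)}(\mathcal F)$.

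That verification is immediate from the hypothesis. Given any $F \in \mathcal F$, we know $F \cap \{i,j\} \neq \emptyset$; since $\{i,j\} \subset S$, this yields
\[
F \cap S \;\supseteq\; F \cap \{i,j\} \;\neq\; \emptyset.
\]
Hence $S$ intersects every member of $\mathcal F$, so $S \in \mathcal T^{(k)}(\mathcal F) = \mathcal G$. Since $S \in \mathcal S_{ij}$ was arbitrary, $\mathcal S_{ij} \subset \mathcal G$.

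There is no real obstacle here: the only content is the reformulation of "saturated" as $\mathcal G = \mathcal T^{(k)}(\mathcal F)$, which was already recorded in the paragraph preceding the claim. The role of the claim in the sequel will presumably be to let one replace, inside a saturated configuration, the combinatorial condition "$\{i,j\}$ is a transversal of $\mathcal F$" by the very concrete structural conclusion "$\mathcal G$ contains the entire full star through $\{i,j\}$", which is a strong handle for subsequent counting or shifting arguments.
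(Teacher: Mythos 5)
Your proof is correct and is exactly the intended argument: the paper states the claim with no proof (just a closing box), treating it as immediate from the reformulation of saturation as $\mathcal G = \mathcal T^{(k)}(\mathcal F)$, which is precisely what you unpack.
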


\begin{claim}
\label{cl:2.3}
Let $1 \leq i \neq j \leq n$, $\mathcal F \subset {[n]\choose k}$.
Then
$$
\left|\mathcal T^{(2)}(\mathcal F)\right| \leq \left|\mathcal T^{(2)}\left(S_{i   j}(\mathcal F)\right)\right|.
$$
\hfill $\square$
\end{claim}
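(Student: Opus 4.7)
The plan is to prove the stronger inclusion $S_{ij}\bigl(\mathcal T^{(2)}(\mathcal F)\bigr) \subset \mathcal T^{(2)}\bigl(S_{ij}(\mathcal F)\bigr)$. Since the shifting operator always preserves cardinality, $\bigl|S_{ij}(\mathcal H)\bigr| = |\mathcal H|$ for any family $\mathcal H$, the desired inequality then follows immediately. Here $S_{ij}$ acts on the $2$-uniform family $\mathcal T^{(2)}(\mathcal F)$ in the usual way, sending a $2$-set $T$ to $T' = (T \setminus \{j\}) \cup \{i\}$ whenever $j \in T$, $i \notin T$, and $T' \notin \mathcal T^{(2)}(\mathcal F)$, and fixing $T$ otherwise.

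Write $\mathcal F' = S_{ij}(\mathcal F)$ and split $\mathcal F$ into $\mathcal F_0 = \{F \in \mathcal F : j \in F,\ i \notin F,\ (F\setminus\{j\})\cup\{i\} \notin \mathcal F\}$ (the sets that genuinely move) and $\mathcal F_1 = \mathcal F \setminus \mathcal F_0$ (the sets that stay). Then $\mathcal F' = \mathcal F_1 \cup \bigl\{(F\setminus\{j\})\cup\{i\} : F \in \mathcal F_0\bigr\}$. Fix $T \in \mathcal T^{(2)}(\mathcal F)$; the task is to check $S_{ij}(T)$ meets every member of $\mathcal F'$, split into cases by how $T$ sits relative to $\{i,j\}$.

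The easy cases are $i \in T$ and $T \cap \{i,j\} = \emptyset$. In both, $S_{ij}(T) = T$. If $i \in T$, every shifted set $(F\setminus\{j\})\cup\{i\}$ contains $i$, and the sets in $\mathcal F_1$ are already met by $T$. If $T \cap \{i,j\} = \emptyset$, then for $F \in \mathcal F_0$ the nonempty set $T \cap F$ lies in $F \setminus \{j\}$, which is contained in the shifted image. The only delicate case is $j \in T$, $i \notin T$, say $T = \{j,c\}$, with $T' = \{i,c\}$.

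If $T' \in \mathcal T^{(2)}(\mathcal F)$, then $S_{ij}(T) = T$; to handle $F \in \mathcal F_0$ one uses $T' \cap F \neq \emptyset$ together with $i \notin F$, forcing $c \in F$ and thus $c \in T \cap ((F\setminus\{j\})\cup\{i\})$. If $T' \notin \mathcal T^{(2)}(\mathcal F)$, then $S_{ij}(T) = T'$; sets $F \in \mathcal F_0$ are met by $T'$ at $i$, while for $F \in \mathcal F_1$ with $j \in F$, $i \notin F$ the definition of $\mathcal F_1$ supplies the sibling $\widetilde F := (F\setminus\{j\})\cup\{i\} \in \mathcal F$, and intersecting $T$ with $\widetilde F$ (using $j \notin \widetilde F$ and $c \neq i$) forces $c \in F$, so $c \in T' \cap F$. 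The remaining sub-cases $j \notin F$ or $i \in F$ are immediate. The expected obstacle is exactly this last paragraph: the bookkeeping of "sibling" sets guaranteed by membership in $\mathcal F_1$ is the one place the proof is not immediate from the definition of the shift.
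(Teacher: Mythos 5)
Your argument is correct. The paper leaves Claim~2.3 unproved (it is marked with a $\square$ as an easy exercise), and your verification that $S_{ij}\bigl(\mathcal T^{(2)}(\mathcal F)\bigr) \subset \mathcal T^{(2)}\bigl(S_{ij}(\mathcal F)\bigr)$, which yields the cardinality bound since shifting preserves size, is the natural way to fill this in. The one genuinely non-automatic step, extracting the sibling $\widetilde F=(F\setminus\{j\})\cup\{i\}\in\mathcal F$ for a non-moving $F\in\mathcal F_1$ with $j\in F$, $i\notin F$ and using $T\cap\widetilde F\neq\emptyset$ to force $c\in F$, is handled correctly, so the case analysis is complete.
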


For a family $\mathcal F \subset {[n]\choose k}$ and an integer $\ell$, $0 \leq \ell \leq k$ define the \emph{$\ell$-shadow} $\sigma_\ell(\mathcal F) := \left\{H \in {[n]\choose \ell}: \exists F \in \mathcal F, H \subset F\right\}$.

Let us recall the shadow theorem of Sperner \cite{S}.
\beq
\label{eq:2.1}
\left|\sigma_\ell(\mathcal F)\right| \Bigr/{n\choose \ell} \geq |\mathcal F|\Bigr/{n\choose k}.
\eeq

This has an important consequence for us.

\setcounter{lemma}{3}
\begin{lemma}
\label{lem:2.4}
Let $m, a, b$ be positive integers, $m \geq a + b$.
Suppose that $\mathcal A \subset {[m]\choose a}$ and $\mathcal B \subset {[m]\choose b}$ are cross-intersecting.
Then
\beq
\label{eq:2.2}
\frac{|\mathcal A|}{{m\choose a}} + \frac{|\mathcal B|}{{m\choose b}} \leq 1.
\eeq
\end{lemma}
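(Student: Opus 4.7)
The plan is to translate the cross-intersecting condition into a shadow-avoidance condition and then apply the shadow inequality \eqref{eq:2.1}. For each $B \in \mathcal B$ let $\overline B = [m] \setminus B$, and set $\overline{\mathcal B} = \{\overline B : B \in \mathcal B\} \subset {[m] \choose m-b}$. The cross-intersecting property is precisely the statement that no $A \in \mathcal A$ is contained in any $\overline B$; equivalently, $\mathcal A$ is disjoint from the $a$-shadow $\sigma_a(\overline{\mathcal B})$. Since $m \geq a+b$ we have $m - b \geq a$, so this shadow is well defined.

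The next step is to bound $|\sigma_a(\overline{\mathcal B})|$ from below. Applying \eqref{eq:2.1} with $k = m-b$, $\ell = a$, $n = m$ to the family $\overline{\mathcal B}$ gives
$$
\frac{|\sigma_a(\overline{\mathcal B})|}{{m\choose a}} \;\geq\; \frac{|\overline{\mathcal B}|}{{m\choose m-b}} \;=\; \frac{|\mathcal B|}{{m\choose b}}.
$$
Since $\mathcal A \subset {[m]\choose a} \setminus \sigma_a(\overline{\mathcal B})$, we conclude
$$
\frac{|\mathcal A|}{{m\choose a}} \;\leq\; 1 - \frac{|\sigma_a(\overline{\mathcal B})|}{{m\choose a}} \;\leq\; 1 - \frac{|\mathcal B|}{{m\choose b}},
$$
which rearranges to \eqref{eq:2.2}.

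There is no real obstacle here: the only conceptual step is noticing that cross-intersection is equivalent to the shadow containment condition, after which Sperner's normalized shadow bound does all the work. If either family is empty the inequality is immediate, so we may assume both are non-empty, which legitimizes the application of \eqref{eq:2.1}.
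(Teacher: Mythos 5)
Your proof is correct and follows essentially the same route as the paper's: complement $\mathcal B$, observe that cross-intersection forces $\mathcal A$ to avoid the $a$-shadow of the complement family, and apply Sperner's normalized shadow inequality \eqref{eq:2.1}. The paper phrases the middle step as ``$\mathcal A \cup \mathcal B^c$ is an antichain,'' but that is just another way of saying what you said, so the arguments coincide.
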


\begin{proof}
Define the family of complements $\mathcal B^c = \left\{ [m] \setminus B, B \in \mathcal B\right\}$.
By the cross-intersecting property $\mathcal A \cup \mathcal B^c$ is an antichain.
Consequently $\sigma_a(\mathcal B^c) \cap \mathcal A = \emptyset$.
In view of \eqref{eq:2.1} $\frac{|\mathcal B|}{{n\choose b}} {n \choose a} + |\mathcal A| \leq {n\choose a}$ which is equivalent to \eqref{eq:2.2}.
\end{proof}

Let us prove a corollary of \eqref{eq:2.1} for initial families.

\begin{lemma}
\label{lem:2.5}
Let $\mathcal F \subset {[n]\choose k}$ be initial and $q$ a positive integer, $q < n$.
Let $P \subset R \subset [q]$ with $|R| \leq k$.
Then
\beq
\label{eq:2.3}
\left|\mathcal F\left(P, \overline{[q]\setminus P}\right)\right|\Bigr/{n - q\choose k - |P|} \leq \left|\mathcal F\left(R, \overline{[q]\setminus R}\right)\right|\Bigr/{n - q\choose k - |R|}.
\eeq
\end{lemma}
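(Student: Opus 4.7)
The plan is to reduce to a single-element step and then iterate. By a chain $P = R_0 \subsetneq R_1 \subsetneq \cdots \subsetneq R_t = R$ with $|R_{s+1}\setminus R_s| = 1$, it suffices to prove \eqref{eq:2.3} when $R = P \cup \{j\}$ for some $j \in [q]\setminus P$. Set $p = |P|$, and assume $p < k$ (the case $p = k$ forces $R = P$ and is trivial).

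The key observation is that, because every element of $[q]$ is smaller than every element of $[n]\setminus[q]$, the initial property of $\mathcal F$ allows us to trade any ``large'' element of a set in $\mathcal F$ for the ``small'' element $j$. Concretely, take any $F = P \cup T \in \mathcal F$ with $T \subset [n]\setminus[q]$ and $F\cap [q] = P$, and any $t \in T$. I claim that $F' := (P \cup \{j\}) \cup (T\setminus\{t\})$ satisfies $F' \prec F$. Indeed, writing both sets in increasing order, the first $p+1$ positions of $F'$ lie in $[q]$ while the $(p+1)$st position of $F$ is already $> q$; within the first $p$ positions, inserting the extra element $j$ into the sorted copy of $P$ only decreases the $i$th smallest entry; and in the last $k-p-1$ positions we simply delete $t$ from $T$, which again only decreases each entry. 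Hence $F' \prec F$, so $F' \in \mathcal F$ by initiality, and since $F'\cap [q] = P\cup\{j\}$ we obtain $T\setminus\{t\} \in \mathcal F(P\cup\{j\},\overline{[q]\setminus(P\cup\{j\})})$. As this holds for every $t \in T$,
$$\sigma_{k-p-1}\!\left(\mathcal F(P,\overline{[q]\setminus P})\right) \subset \mathcal F\!\left(P\cup\{j\},\,\overline{[q]\setminus(P\cup\{j\})}\right).$$

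Now apply Sperner's shadow inequality \eqref{eq:2.1} to the family $\mathcal F(P,\overline{[q]\setminus P})$, regarded as a family of $(k-p)$-subsets of the $(n-q)$-element ground set $[n]\setminus[q]$:
$$\frac{\bigl|\sigma_{k-p-1}(\mathcal F(P,\overline{[q]\setminus P}))\bigr|}{\binom{n-q}{k-p-1}} \;\geq\; \frac{\bigl|\mathcal F(P,\overline{[q]\setminus P})\bigr|}{\binom{n-q}{k-p}}.$$
Combining this with the inclusion above yields \eqref{eq:2.3} for the single step $R = P\cup\{j\}$, and the general case follows by iterating along the chain.

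The only mildly delicate point is the verification of $F' \prec F$ in the first paragraph; once that is in hand, the proof is a direct combination of the initial property with the Sperner shadow bound, with no further calculation required.
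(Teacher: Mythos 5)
Your proof is correct and follows essentially the same route as the paper: initiality shows that a shadow of $\mathcal F(P,\overline{[q]\setminus P})$ (as a family on $[n]\setminus[q]$) sits inside $\mathcal F(R,\overline{[q]\setminus R})$, and then Sperner's shadow inequality \eqref{eq:2.1} finishes; the paper merely exchanges all $|R\setminus P|$ elements at once instead of reducing to single-element increments. One small caution on wording: ``delete $t$ from $T$, which again only decreases each entry'' is imprecise, since deleting an element cannot lower the remaining small entries; what you actually need, and what is true, is that the $s$-th smallest element of $T\setminus\{t\}$ is at most the $(s+1)$-th smallest element of $T$, which is exactly the one-position offset created by inserting $j$ into the $[q]$-part.
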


\begin{proof}
Set $d = |R\setminus P|$.
Note that $R \subset [q]$ implies $R \setminus P \prec S$ for every $S \in {[q + 1, n]\choose |R \setminus P|}$.
Take a set $F \in \mathcal F$ satisfying $F \cap [q] = P$ and a $d$-element subset $S$ of $F \setminus [q]$.
Then $(F\cup (R \setminus P)) \setminus S \prec F$ implies that $F \setminus ([q] \cup S)$ is in $\mathcal F\left(R, \overline{[q]\setminus R}\right)$.
Now \eqref{eq:2.3} follows from \eqref{eq:2.1}.
\end{proof}

\section{The proof of Theorem \ref{th:1.6}}
\label{sec:3}

Let us define $H_i = [k + 1] \setminus \{i\}$ for $1 \leq i \leq k + 1$.
Let $H \subset {[2, n]\choose k}$.
Then $H_{k + 1} \prec H_k \prec \ldots \prec H_1 \prec H$.
Let $\mathcal H = \mathcal F$ or $\mathcal G$.
Non-triviality implies $\mathcal H \cap {[2, n]\choose k} \neq \emptyset$.
By initiality ${[k + 1]\choose k} = \left\{ H_1, \ldots, H_{k + 1}\right\} \subset \mathcal H$ follows.
By cross-intersection $\left|H\cap [k + 1]\right| \geq 2$ for all $H \in \mathcal F \cup \mathcal G$.
For $P \subset [k + 1]$, $2 \leq |P| \leq k$ define $\mathcal F(P) = \mathcal F\left(P, \overline{[k + 1] \setminus P}\right)$ and
$\alpha(P) = |\mathcal F(P)|\Bigr/{n - k - 1\choose k - |P|}$.
Define $\beta(P)$ analogously using $\mathcal G$ instead of $\mathcal F$.

In view of Lemma \ref{lem:2.5} for $P \subset R \subsetneqq [k + 1]$, $\alpha(P) \leq \alpha(R)$.

\begin{claim}
\label{cl:3.1}
If $P \cap Q = \emptyset$ then
\beq
\label{eq:3.1}
\alpha(P) + \beta(Q) \leq 1.
\eeq
\end{claim}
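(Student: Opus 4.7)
The plan is to reduce Claim~\ref{cl:3.1} to the ``extremal'' configuration $P\cup Q=[k+1]$ and then apply Lemma~\ref{lem:2.4} to the residual families living on the ground set $[k+2,n]$.

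First I would handle the boundary case $|P|+|Q|=k+1$, in which $Q=[k+1]\setminus P$. Set $m:=n-k-1$, $a:=k-|P|$, $b:=k-|Q|$, and regard
$$
\mathcal{A}:=\mathcal{F}\bigl(P,\overline{[k+1]\setminus P}\bigr)\subseteq\binom{[k+2,n]}{a},\qquad \mathcal{B}:=\mathcal{G}\bigl(Q,\overline{[k+1]\setminus Q}\bigr)\subseteq\binom{[k+2,n]}{b}
$$
as families on the ground set $[k+2,n]$. Any $A\in\mathcal{A}$ and $B\in\mathcal{B}$ arise from some $F\in\mathcal{F}$ with $F\cap[k+1]=P$ and $G\in\mathcal{G}$ with $G\cap[k+1]=Q$; since $P\cap Q=\emptyset$ the sets $F,G$ cannot meet inside $[k+1]$, so the cross-intersection of $\mathcal{F}$ with $\mathcal{G}$ forces $A\cap B=(F\cap G)\cap[k+2,n]\neq\emptyset$. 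As $a+b=k-1\leq n-k-1=m$, Lemma~\ref{lem:2.4} applies and yields $\alpha(P)+\beta(Q)=|\mathcal{A}|/\binom{m}{a}+|\mathcal{B}|/\binom{m}{b}\leq 1$.

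For the general case $|P|+|Q|\leq k+1$, I would set $P':=[k+1]\setminus Q$, so that $P\subseteq P'\subsetneq[k+1]$, $P'\cap Q=\emptyset$, $|P'|+|Q|=k+1$, and $2\leq|P'|\leq k$ (the lower bound uses $|Q|\leq k-1$, which in turn follows from the disjointness of $P$ and $Q$ together with $|P|\geq 2$). Lemma~\ref{lem:2.5} with $q=k+1$ and $R=P'$ then gives $\alpha(P)\leq\alpha(P')$, while the boundary case applied to the pair $(P',Q)$ gives $\alpha(P')+\beta(Q)\leq 1$; chaining these two inequalities finishes the proof.

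The main thing to get right is the bookkeeping: one must verify that cross-intersection on $[n]$ really descends to cross-intersection of the residual families on $[k+2,n]$, and this step uses $P\cap Q=\emptyset$ essentially---were $P$ and $Q$ to overlap, every $F\supseteq P$ and $G\supseteq Q$ would automatically meet inside $[k+1]$, no constraint at all would be placed on $A,B$, and the inequality could genuinely fail (e.g.\ $\alpha(P)=\beta(P)=1$ is possible when $P=Q$). Beyond that point, the only numerical input is the dimension check $n\geq 2k\Rightarrow m\geq a+b$ at the boundary, which is precisely the standing hypothesis of Theorem~\ref{th:1.6}.
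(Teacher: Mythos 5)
Your proof is correct and follows essentially the same two-step structure as the paper's: reduce to the complementary case $P\cup Q=[k+1]$ via the monotonicity Lemma~\ref{lem:2.5}, then apply Lemma~\ref{lem:2.4} to the residual cross-intersecting families on $[k+2,n]$. The only cosmetic difference is that you enlarge $P$ to $P'=[k+1]\setminus Q$ and use $\alpha(P)\leq\alpha(P')$, whereas the paper enlarges $Q$ to $[k+1]\setminus P$ and uses $\beta(Q)\leq\beta([k+1]\setminus P)$; these are symmetric variants of the same argument.
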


\begin{proof}
If $Q = [k + 1]\setminus P$ then $\mathcal F(P)$ and $\mathcal G(Q)$ are cross-intersecting and $(k - |P|) + k - |Q| = k - 1 \leq n - (k + 1)$ implies that we can apply Lemma \ref{lem:2.4}.
Then \eqref{eq:3.1} follows from \eqref{eq:2.2}.
If $Q \subsetneqq [k + 1] \setminus P$ then we can deduce \eqref{eq:3.1} using monotonicity: $\beta(Q) \leq \beta([k + 1]\setminus P)$.

Let us use \eqref{eq:3.1} to prove the main lemma.

\setcounter{lemma}{1}
\begin{lemma}
\label{lem:3.2}
Let $2 \leq i \leq \frac{k}{2}$ and let $P, Q \in {[k + 1]\choose i}$ be disjoint.
Then
\beq
\label{eq:3.2}
|\mathcal F(P)| + |\mathcal G(Q)| + |\mathcal G([k + 1]\setminus P) + |\mathcal F([k + 1] \setminus Q)| \leq {n - k - 1\choose k - i} + {n - k - 1\choose i - 1}.
\eeq
\end{lemma}

\begin{proof}
From \eqref{eq:3.1} we derive the following three inequalities
\begin{align}
\label{eq:3.3}
|\mathcal F(P)| + |\mathcal G(Q)| &\leq {n - k - 1\choose k - i},\\
\label{eq:3.4}
\alpha(P) + \beta([k + 1]\setminus P) &\leq 1,\\
\label{eq:3.5}
\beta(Q) + \alpha([k + 1]\setminus Q) &\leq 1.
\end{align}
Note that $2i \leq k$ implies $i - 1 < k - i$ and for $n \geq 2k$, $(k - i) + (i - 1) = k - 1 \leq n - k - 1$.
Thus ${n - k - i\choose i - 1} \leq {n - k - 1\choose k - i}$.
Set $\gamma = {n - k - 1\choose i - 1}\Bigr/ {n - k - 1 \choose k - i}$.
Multiplying by ${n - k - 1\choose i - 1}$ the sum of \eqref{eq:3.4} and \eqref{eq:3.5} yields
$$
|\mathcal G([k + 1] \setminus P)| + |\mathcal F((k + 1)\setminus Q)| + \gamma\bigl(|\mathcal F(P)| + |\mathcal G(Q)|\bigr) \leq 2 {n - k - 1\choose i - 1}.
$$
Adding $(1 - \gamma)$ times \eqref{eq:3.3} yields \eqref{eq:3.2}.
\end{proof}

To deduce Theorem \ref{th:1.6} from Lemma \ref{lem:3.2} is easy.
Let us define for $0 \leq i \leq k$,
$f_i = \bigl|\{F \in \mathcal F : |F \cap [k + 1]| = i\}\bigr|$.
Define $g_i$ analogously.
Obviously,
$$
|\mathcal F| = f_0 + \ldots + f_k = f_2 + \ldots + f_{k - 1} + {k + 1\choose k}, \qquad |\mathcal G| = g_2 + \ldots + g_{k - 1} + {k + 1\choose k}.
$$
For a fixed $i$, $2 \leq i \leq \frac{k}2$, averaging \eqref{eq:3.2} over all choices of disjoint $i$-sets $P$ and $Q$ yields
$$
f_i + g_i + f_{k + 1 - i} + g_{k + 1 - i} \leq {k + 1\choose i} {n - k - 1\choose k - i} + {k + 1\choose k + 1 - i} {n - k - 1\choose i - 1}.
$$
In case of $k + 1 = 2q$ is even summing \eqref{eq:3.3} over all ordered complementary pairs $(P, Q)$ we infer
$$
f_q + g_q \leq {k + 1\choose q} {n - k - 1\choose k - q}.
$$
Summing these inequalities we obtain
$$
|\mathcal F| + |\mathcal G| \leq \sum_{2 \leq j \leq k - 1} {k + 1\choose j} {n - k - 1\choose k - j} + 2 {k + 1\choose k} \quad \text{ proving \eqref{eq:1.6}.}
$$
In case of equality, equality must hold all the way, in particular in \eqref{eq:3.3} for all $2 \leq i \leq \frac{k + 1}{2}$ and all pairs of disjoint $i$-sets $P$ and $Q$.
Using uniqueness in case of Sperner's shadow theorem \eqref{eq:3.1}, we infer that one of $\mathcal F(P)$ and $\mathcal G(Q)$ must be empty and the other the full set ${[k + 2, n]\choose k - i}$.
Using equality in Lemma \ref{lem:2.5} we eventually arrive at the conclusion that either $\mathcal F = {[k + 1]\choose k}$ or $\mathcal G = {[k + 1]\choose k}$ and the other is $\left\{H \in {[n]\choose k} : |H \cap [k + 1]| \geq 2\right\}$.
\end{proof}

Let us prove the analogous result for families of distinct uniformities as well.

\setcounter{theorem}{2}
\begin{theorem}
\label{th:3.3}
Suppose that $n \geq k + \ell$, $k \geq \ell \geq 2$.
Let $F \subset {[n]\choose k}$ and $\mathcal G \subset {[n]\choose \ell}$ be non-trivial, shifted and cross-intersecting.
Then
\beq
\label{eq:3.6}
|\mathcal F| + |\mathcal G| \leq \sum_{2 \leq i \leq \ell + 1} {\ell + 1\choose i} {n - \ell - 1\choose k - i} + {\ell + 1\choose \ell} =: g(n, k, \ell).
\eeq
\end{theorem}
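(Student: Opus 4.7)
The strategy is to adapt the proof of Theorem \ref{th:1.6} to the mixed uniformity setting. First, since $\mathcal{F}$ and $\mathcal{G}$ are shifted and non-trivial, one derives $\binom{[k+1]}{k} \subset \mathcal{F}$ and $\binom{[\ell+1]}{\ell} \subset \mathcal{G}$ by the same argument (any non-star shifted family contains $[2, k+1]$, and further shifting produces all $k$-subsets of $[k+1]$). Cross-intersection with $\binom{[\ell+1]}{\ell}$ forces $|F \cap [\ell+1]| \geq 2$ for every $F \in \mathcal{F}$. Partition $\mathcal{F}$ and $\mathcal{G}$ by their intersection with $[\ell+1]$ via $f_i = |\{F \in \mathcal{F} : |F \cap [\ell+1]| = i\}|$ and $g_j$ analogously, and set $F_i = \binom{\ell+1}{i}\binom{n-\ell-1}{k-i}$, $G_j = \binom{\ell+1}{j}\binom{n-\ell-1}{\ell-j}$; note that $f_0 = f_1 = 0$ and $g_\ell = \binom{\ell+1}{\ell} = \ell+1$.

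Next, define $\alpha(P) = |\mathcal{F}(P, \overline{[\ell+1]\setminus P})|/\binom{n-\ell-1}{k-|P|}$ and analogously $\beta(P)$ for $\mathcal{G}$; by Lemma \ref{lem:2.5} both are monotone in $P \subset [\ell+1]$. The key inequality $\alpha(P) + \beta(Q) \leq 1$ for all disjoint $P, Q \subset [\ell+1]$ is established first for complementary $Q = [\ell+1]\setminus P$ by applying Lemma \ref{lem:2.4} on $[\ell+2, n]$: the uniformities $k - |P|$ and $|P| - 1$ sum to $k - 1$, which is at most $n - \ell - 1$ precisely when $n \geq k + \ell$. The non-complementary case reduces to this via monotonicity.

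I would then prove an analog of Lemma \ref{lem:3.2}: for each $i$ with $2 \leq i \leq \lfloor \ell/2 \rfloor$ and every disjoint pair $P, Q \in \binom{[\ell+1]}{i}$, a weighted combination of the three inequalities $\alpha(P) + \beta(Q) \leq 1$, $\alpha(P) + \beta([\ell+1]\setminus P) \leq 1$, $\beta(Q) + \alpha([\ell+1]\setminus Q) \leq 1$ should yield
$$|\mathcal{F}(P)| + |\mathcal{G}(Q)| + |\mathcal{G}([\ell+1]\setminus P)| + |\mathcal{F}([\ell+1]\setminus Q)| \leq \binom{n-\ell-1}{k-i} + \binom{n-\ell-1}{k-\ell-1+i},$$
which is tight at the extremal configuration $\mathcal{F} = \{F: |F \cap [\ell+1]| \geq 2\}$, $\mathcal{G} = \binom{[\ell+1]}{\ell}$. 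Averaging over disjoint pairs and summing over $i$ (with separate treatment of the central index when $\ell$ is odd, as in the proof of Theorem \ref{th:1.6}) gives $\sum_{j=2}^{\ell-1}(f_j + g_j) \leq \sum_{j=2}^{\ell-1} F_j$. The residual indices $f_\ell, f_{\ell+1}, g_0, g_1$ are controlled by summing $\alpha(P) + \beta([\ell+1]\setminus P) \leq 1$ over $P \in \binom{[\ell+1]}{\ell}$ and at $P = [\ell+1]$, which yield $f_\ell/F_\ell + g_1/G_1 \leq 1$ and $f_{\ell+1}/F_{\ell+1} + g_0/G_0 \leq 1$; combined with the binomial inequality $\binom{n-\ell-1}{k-j} \geq \binom{n-\ell-1}{j-1}$ (which holds in the relevant range because $n \geq k + \ell$) and $g_\ell = \ell + 1$, these contribute $F_\ell + F_{\ell+1} + (\ell+1)$. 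Summing everything gives $g(n,k,\ell)$.

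The main obstacle lies in executing the Lemma \ref{lem:3.2} analog. In Theorem \ref{th:1.6} the four terms fell into two pairs with a common denominator, so a single weight $\gamma = \binom{n-k-1}{i-1}/\binom{n-k-1}{k-i}$ sufficed. Here the four denominators $\binom{n-\ell-1}{k-i}$, $\binom{n-\ell-1}{\ell-i}$, $\binom{n-\ell-1}{i-1}$, $\binom{n-\ell-1}{k-\ell-1+i}$ are pairwise distinct, and identifying weights that cancel the unwanted contributions to produce exactly the two-binomial right-hand side — while keeping every multiplier nonnegative throughout the range $2 \leq i \leq \lfloor \ell/2 \rfloor$ — requires both the shadow condition $n - \ell - 1 \geq k - 1$ and careful monotonicity of the relevant binomial coefficients. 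This is the heart of the computation, and it is where the hypothesis $n \geq k + \ell$ is genuinely used.
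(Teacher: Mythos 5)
Your plan --- mimic the proof of Theorem~\ref{th:1.6} by establishing a four-term analogue of Lemma~\ref{lem:3.2} for each disjoint pair $(P,Q)$ of size $i$ and then averaging over such pairs --- does not survive the passage to $k>\ell$, because the proposed four-term inequality is \emph{false}. Take $n=19$, $k=9$, $\ell=8$, $i=2$, and set $\mathcal F=\bigl\{F\in{[19]\choose 9}:|F\cap[9]|\geq 8\bigr\}$, $\mathcal G=\bigl\{G\in{[19]\choose 8}:|G\cap[9]|\geq 2\bigr\}$; these are non-trivial, shifted and cross-intersecting. For $P=\{1,2\}$, $Q=\{3,4\}$ one has $|\mathcal F(P)|=|\mathcal F([\ell+1]\setminus Q)|=0$, $|\mathcal G(Q)|={10\choose 6}=210$, $|\mathcal G([\ell+1]\setminus P)|={10\choose 1}=10$, so the left-hand side is $220$, while the right-hand side you claim, ${10\choose 7}+{10\choose 2}=165$, is smaller. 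This is precisely the obstacle you flagged: with four genuinely distinct denominators the weight system $w_1+w_2=a$, $w_1+w_3=b$, $w_2=c$, $w_3=d$ is overdetermined, and no nonnegative combination of the three two-term bounds --- even augmented by the Lemma~\ref{lem:2.5} monotonicity relations among just these four sets --- can produce the desired inequality. The constraints that do rule out such configurations couple $\alpha$'s and $\beta$'s at \emph{many} subsets simultaneously and so are invisible at the level of a single disjoint pair; the difficulty you sensed is not a computation to be finished but a wall.

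This is exactly what Section~\ref{sec:3} abandons in the mixed-uniformity case. Instead of a pointwise four-term lemma, Proposition~\ref{prop:3.5} expresses $(\ell+1)!\,(|\mathcal F|+|\mathcal G|)$ as a sum over all $(\ell+1)!$ maximal chains in $[\ell+1]$; together with the monotonicity from Lemma~\ref{lem:2.5} this collapses the problem to a single linear program in the size-indexed variables $\alpha_2,\dots,\alpha_{\ell-1},\beta_2,\dots,\beta_{\ell-1}$ with $\alpha_i+\beta_j\leq 1$ for $i+j\leq\ell+1$. That LP is then maximised by an exchange argument, and the binomial comparison \eqref{eq:3.10}--\eqref{eq:3.11} required at the critical step is supplied by Lemma~\ref{lem:3.4} on appropriate sequences. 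The global coupling of all intersection sizes at once is the ingredient your sketch is missing. A secondary gap in your boundary treatment: cross-intersection with ${[k+1]\choose k}\subset\mathcal F$ only gives $|G\cap[k+1]|\geq 2$, not $|G\cap[\ell+1]|\geq 2$, so $g_0,g_1$ need not vanish once $k\geq\ell+2$; and your bound $f_{\ell+1}/F_{\ell+1}+g_0/G_0\leq 1$ does not yield $f_{\ell+1}+g_0\leq F_{\ell+1}$, since $G_0={n-\ell-1\choose \ell}$ can dwarf $F_{\ell+1}={n-\ell-1\choose k-\ell-1}$ (for $k=\ell+1$ the latter is $1$).
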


Note that for $k = \ell$, ${n - \ell - 1\choose k - \ell - 1} = 0$ and \eqref{eq:3.6} follows from Theorem \ref{th:1.6}.
In the sequel $k > \ell \geq 2$.
By non-triviality and shiftedness $[2, k + 1] \in \mathcal F$ and $[2, \ell + 1] \in \mathcal G$.
Then by shiftedness ${[k + 1]\choose k} \subset \mathcal F$ and ${[\ell + 1]\choose \ell} \subset \mathcal G$.
The cross-intersecting property implies $|H| \geq 2$ for all $H \in \mathcal F \cup \mathcal G$.

For the proof of \eqref{eq:3.6} we need an inequality and an identity.

A sequence $(c_0, c_1, \ldots, c_m)$ of length $m + 1$ is called \emph{appropriate} if $c_i = c_{m - i}$, $0 \leq i < \frac{m}2$ and
$0 \leq c_0 \leq \ldots \leq c_{\lfloor m / 2\rfloor}$.

\setcounter{lemma}{3}
\begin{lemma}
\label{lem:3.4}
Let $q > p + 1$ and consider two appropriate sequences $(a_0, \ldots, a_q)$ and $(b_0,\ldots, b_p)$.
Let $u, v$ be non-negative integers satisfying $u < v$, $u + v \leq q - p$.
Then
\beq
\label{eq:3.7}
\sum_{0 \leq i \leq p} a_{i + u} b_i \leq \sum_{0 \leq i \leq p} a_{i + v} b_i.
\eeq
\end{lemma}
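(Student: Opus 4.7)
The plan is to introduce $\Phi(t) := \sum_{i=0}^{p} a_{i+t}\, b_i$ for $0 \leq t \leq q-p$ and prove that $\Phi$ is symmetric about the centre $(q-p)/2$ and unimodal, with its maximum at that centre. The inequality \eqref{eq:3.7} then follows immediately, since the hypothesis $0 \leq u < v$ with $u + v \leq q - p$ says precisely that $v$ is no farther from $(q-p)/2$ than $u$ is. The symmetry $\Phi(t) = \Phi(q-p-t)$ is immediate from substituting $i \mapsto p-i$ in the sum together with $a_j = a_{q-j}$ and $b_i = b_{p-i}$, so it remains to show $\Phi(s) - \Phi(s-1) \geq 0$ for $1 \leq s \leq \lceil(q-p)/2\rceil$.

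Writing $\delta_j := a_j - a_{j-1}$, appropriateness of $(a_j)$ gives $\delta_j \geq 0$ for $j \leq \lfloor q/2\rfloor$ as well as the antisymmetry $\delta_{q+1-j} = -\delta_j$, and
\[
\Phi(s) - \Phi(s-1) \;=\; \sum_{j=s}^{s+p} \delta_j\, b_{j-s}.
\]
The key step is to pair each index $j$ with its reflection $q+1-j$. When both members of a pair lie in $[s, s+p]$, the pooled contribution collapses to
\[
\delta_j\, b_{j-s} + \delta_{q+1-j}\, b_{q+1-j-s} \;=\; \delta_j\bigl(b_{j-s} - b_{q+1-j-s}\bigr).
\]
Taking the representative with $j \leq \lfloor q/2\rfloor$ makes $\delta_j \geq 0$, and the index sum $(j-s) + (q+1-j-s) = q+1-2s \geq p$ is merely a rewriting of $s \leq \lceil(q-p)/2\rceil$. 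For the appropriate sequence $(b_i)$ this is exactly the condition that forces $b_{j-s} \geq b_{q+1-j-s}$, via the one-line observation that a symmetric unimodal $b$ with centre $p/2$ satisfies $b_x \geq b_y$ whenever $x \leq y$ and $x + y \geq p$.

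The \emph{unpaired} indices are those $j \in [s, s+p]$ whose mate $q+1-j$ overshoots the range; in our regime $2s \leq q+1-p$ this can only happen on the upper side, so the unpaired $j$'s are precisely those with $j \leq q-p-s$. Whether $s+p$ lies to the left or to the right of $q/2$, this bound forces $j \leq \lfloor q/2\rfloor$, so $\delta_j \geq 0$ and each individual unpaired term $\delta_j\, b_{j-s}$ is already non-negative. Adding the paired and unpaired contributions yields $\Phi(s) \geq \Phi(s-1)$, hence unimodality of $\Phi$.

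I expect the main obstacle to be the careful book-keeping of the pairing: deciding for every $j \in [s, s+p]$ whether it is paired or not, checking that the representative of each pair can be chosen on the correct side of $q/2$, and verifying that the single arithmetic fact $q+1-2s \geq p$ simultaneously controls both the $b$-comparison on paired terms and the sign of $\delta_j$ on unpaired terms. Once this is set up cleanly, the remainder of the proof is routine.
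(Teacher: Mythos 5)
Your argument is correct, and it takes a genuinely different route from the paper. Frankl first reduces to the case where $(a_i)$ and $(b_i)$ are $0$--$1$ sequences (an appropriate $0$--$1$ sequence is the indicator of a centred symmetric interval, so $\sum a_{i+t}b_i$ counts overlaps between two translated intervals and the inequality becomes visually obvious); the general case then follows because every appropriate sequence is a non-negative combination of appropriate $0$--$1$ sequences. That decomposition is the whole content of his proof — the combinatorial picture does the analytic work. You instead prove symmetry and unimodality of $\Phi(t)=\sum_i a_{i+t}b_i$ directly: you Abel-sum to get $\Phi(s)-\Phi(s-1)=\sum_{j=s}^{s+p}\delta_j\,b_{j-s}$, exploit $\delta_{q+1-j}=-\delta_j$ to pair reflected indices, and check sign conditions termwise. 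This avoids the decomposition trick entirely, at the cost of more index bookkeeping. I checked the bookkeeping and it holds up: the unpaired indices are exactly those with $j\le q-p-s$ (the overshoot is one-sided because $2s\le q+1-p$), and since $s+p$ and $q-p-s$ sum to $q$ their minimum is automatically at most $\lfloor q/2\rfloor$, so $\delta_j\ge 0$ on the unpaired range. One small caveat in your closing remark: the bound $q+1-2s\ge p$ is what drives the $b$-comparison on paired terms, but the non-negativity of $\delta_j$ on unpaired terms comes for free from the min-of-reflected-integers observation rather than from that inequality; the two conditions are not quite controlled by "the single arithmetic fact" as stated, though both hold in the relevant range. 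Neither approach is obviously preferable: Frankl's is shorter once one trusts the $0$--$1$ reduction, while yours is self-contained and arguably more transparent about exactly why each term is non-negative.
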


We are convinced that this is a known inequality but could not find a reference.
Therefore we provide a proof.

\begin{proof}[Proof of \eqref{eq:3.7}]
Let us first assume that both sequences are sequences of $0$ and $1$ only.
Appropriateness implies that the $1$'s constitute a symmetric interval in the middle.
The formula \eqref{eq:3.7} counts the number of overlaps between translates of $(a_0, \ldots, a_q)$ and $(b_0,\ldots, b_p)$.
Clearly the maximum is attained (not necessarily uniquely) when $u = \left\lfloor \frac{q - p}{2}\right\rfloor$ and $u = \left\lceil \frac{q - p}{2}\right\rceil$.
Now \eqref{eq:3.7} follows by ``discrete continuity''.

To prove \eqref{eq:3.7} just note that every appropriate sequence can be expressed as a non-negative linear combination of appropriate $0$-$1$-sequences.
\end{proof}

Let us fix $\ell \geq 2$ and consider a maximal chain $A_1 \subset A_2 \subset \ldots \subset A_\ell$ where $A_i \in {[\ell + 1]\choose i}$, $1 \leq i \leq \ell$.
Set also $B_i = [\ell + 1] \setminus A_i$.

For a subset $P \subset [\ell + 1]$ define $\alpha_p = \left(\dfrac{|\mathcal F(P, [\ell + 1])|}{{n - \ell - 1\choose k - |P|}}\right)$ and $\beta_p = \left(\dfrac{|\mathcal G(P, [\ell + 1])|}{{n - \ell - 1\choose \ell - |P|}}\right)$.

\setcounter{proposition}{4}
\begin{proposition}
\label{prop:3.5}
\begin{align}
\label{eq:3.8}
&\sum_{\text{\rm chains}} \sum_{1 \leq i \leq \ell} {\ell + 1\choose i} {n - \ell - 1\choose k - i} \alpha_{A_i} + {\ell + 1\choose \ell + 1 - i} {n - \ell - 1\choose i - 1}\beta_{B_i}\\
&\qquad = (\ell + 1)! (|\mathcal F| + |\mathcal G|) \nonumber
\end{align}
where the outer summation is over all $(\ell + 1)!$ full chains.
\end{proposition}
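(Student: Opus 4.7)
The identity is a double-counting statement. The plan is to rewrite the $\alpha$- and $\beta$-factors as raw set counts, exchange the order of summation, and check that each $F\in\mathcal F$ and each $G\in\mathcal G$ contributes exactly $(\ell+1)!$ to the left-hand side. By the definitions of $\alpha_P$ and $\beta_P$,
\[
\binom{n-\ell-1}{k-i}\alpha_{A_i} = |\{F\in\mathcal F : F\cap[\ell+1]=A_i\}|,\qquad \binom{n-\ell-1}{i-1}\beta_{B_i} = |\{G\in\mathcal G : G\cap[\ell+1]=B_i\}|,
\]
so \eqref{eq:3.8} amounts to counting pairs (chain, $F$) with $A_i=F\cap[\ell+1]$, weighted by $\binom{\ell+1}{i}$, together with pairs (chain, $G$) with $B_i=G\cap[\ell+1]$, weighted by $\binom{\ell+1}{\ell+1-i}$.

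Swapping the order of summation, I would fix $F\in\mathcal F$ and set $s := |F\cap[\ell+1]|$. The requirement $A_i=F\cap[\ell+1]$ forces $i=s$ and pins down the $s$-th level of the chain; the number of full chains on $[\ell+1]$ whose $s$-th level equals a prescribed $s$-subset is $s!(\ell+1-s)!$, obtained by ordering that $s$-subset at the bottom and its complement at the top. Multiplying by $\binom{\ell+1}{s}$ yields a total contribution of $s!(\ell+1-s)!\binom{\ell+1}{s}=(\ell+1)!$ per $F$. A mirror calculation handles $G\in\mathcal G$ with $t := |G\cap[\ell+1]|$: $B_i=G\cap[\ell+1]$ forces $i=\ell+1-t$, the number of such chains is $(\ell+1-t)!\,t!$, and the coefficient $\binom{\ell+1}{\ell+1-i}=\binom{\ell+1}{t}$ again gives $(\ell+1)!$. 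Summing over all $F$ and $G$ produces the right-hand side $(\ell+1)!(|\mathcal F|+|\mathcal G|)$.

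The only step I expect to require care is checking that every $F$ and $G$ is indeed captured by the index range $1\leq i\leq\ell$. For $G$ this is automatic from $|G|=\ell$, and the cross-intersecting property together with $\binom{[k+1]}{k}\subset\mathcal F$ and $\binom{[\ell+1]}{\ell}\subset\mathcal G$ forces $|F\cap[\ell+1]|,|G\cap[\ell+1]|\geq 2$. The delicate case is $|F\cap[\ell+1]|=\ell+1$, i.e.\ $F\supset[\ell+1]$, which can occur when $k\geq\ell+1$; I would check that such $F$ are absorbed by implicitly extending the chain with the trivial top level $A_{\ell+1}=[\ell+1]$, at which point the same $s!(\ell+1-s)!\binom{\ell+1}{s}=(\ell+1)!$ bookkeeping still applies. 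Once this indexing convention is pinned down, the proof is pure combinatorial double-counting.
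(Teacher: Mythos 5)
Your approach is the same combinatorial double-counting the paper uses: each $i$-subset of $[\ell+1]$ lies on exactly $i!\,(\ell+1-i)!$ of the $(\ell+1)!$ maximal chains, and combined with the coefficient ${\ell+1\choose i}$ this turns every $\alpha$- and $\beta$-term into $(\ell+1)!$ times a raw count. Where the paper disposes of this in one line, you carry out the bookkeeping explicitly, and in doing so you catch a genuine (though easily repaired) inaccuracy in the statement as written: when $k>\ell$ a set $F\in\mathcal F$ with $[\ell+1]\subset F$ contributes to $\alpha_{[\ell+1]}$ but to none of $\alpha_{A_1},\ldots,\alpha_{A_\ell}$, since no level of a chain $A_1\subset\cdots\subset A_\ell$ equals $[\ell+1]$. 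Thus \eqref{eq:3.8} as literally stated undercounts the left-hand side by $(\ell+1)!\cdot |\{F\in\mathcal F: [\ell+1]\subset F\}|$. Your fix --- viewing every chain as implicitly carrying the top level $A_{\ell+1}=[\ell+1]$, $B_{\ell+1}=\emptyset$, and letting $i$ run to $\ell+1$ --- is exactly right and restores the identity, since $\binom{\ell+1}{\ell+1}\cdot(\ell+1)!\cdot 0!=(\ell+1)!$. This is a real improvement on the paper's one-line argument.

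One caveat on the $\mathcal G$ side: from $\binom{[k+1]}{k}\subset\mathcal F$ one deduces $|G\cap[k+1]|\ge 2$, but \emph{not} $|G\cap[\ell+1]|\ge 2$, and for $k>\ell$ one can indeed have $G\in\mathcal G$ with $G\cap[\ell+1]=\emptyset$ (e.g.\ $\mathcal F=\binom{[k+1]}{k}$, $\mathcal G=\binom{[k+1]}{\ell}$). Such a $G$ falls outside the range $1\le i\le\ell$ for the same reason the sets $F\supset[\ell+1]$ do, and it is absorbed by $B_{\ell+1}=\emptyset$ under the very extension you proposed; so your fix simultaneously repairs both ends of the index range, but the clause asserting $|G\cap[\ell+1]|\ge 2$ should be dropped, as it is not justified by the facts you cite and is not needed once $i$ runs to $\ell+1$.
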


\begin{proof}
Since $A \subset {[\ell + 1]\choose i}$ occurs in $i!(\ell + 1 - i)!$ full chains, \eqref{eq:3.8} follows easily from the definition.
\end{proof}

Let us note that for the families giving equality in \eqref{eq:3.6}, $\alpha_A = 1$ for $|A| \geq 2$ and $\beta_B = 1$ for $|B| = \ell$ and $0$ otherwise.

Since $\alpha_A = \beta_B = 1$ for $\ell$-subsets and it is $0$ for singletons, to prove \eqref{eq:3.4} via \eqref{eq:3.8} it is sufficient to prove that subject to the conditions $0 \leq \alpha_2 \leq \ldots \leq \alpha_{\ell - 1} \leq 1$, $0 \leq \beta_2 \leq \ldots \leq \beta_{\ell - 1} \leq 1$ and $\alpha_i + \beta_j \leq 1$ for $i + j \leq \ell + 1$,
\beq
\label{eq:3.9}
\sum_{2\leq i \leq \ell - 1}\!\! {\ell \!+\! 1\choose i}\! \left(\!{n\! -\! \ell \!- \!1\choose k - i} \alpha_i \! +\! {n \! -\! \ell\! -\! 1\choose i - 1}\beta_{\ell + 1 - i}\!\right)\! \leq\! \sum_{2 \leq i \leq \ell - 1} \!\! {\ell \! +\! 1\choose i} {n \!- \!\ell\! - \!1\choose k - i}\!.
\eeq
I.e., the LHS of \eqref{eq:3.9} is maximized for $\alpha_2 = \ldots = \alpha_{\ell - 1} = 1$, $\beta_2 = \ldots = \beta_{\ell - 1} = 0$.
This is a LP problem for these variables.
Setting $\beta_{\ell + 1 - i} = 1 - \alpha_i$ can only increase the LHS.
If $\alpha_2 = 1$, we are done.
Otherwise let $t \leq \ell - 1$ be the maximal value of $i$ such that $\alpha_t < 1$, $\beta_t = 1 - \alpha_t > 0$.
Suppose that $t \geq \frac{\ell + 1}{2}$.
All we have to show is that changing $\alpha_i$ to $\alpha_i + \beta_t$ and $\beta_i$ to $\beta_i - \beta_t$ for $\ell + 1 - t \leq i \leq t$ does not decrease the LHS.
This is equivalent to
\beq
\label{eq:3.10}
\sum_{\ell + 1 - t \leq i \leq t} {\ell + 1\choose i} {n - \ell - 1\choose k - i} \geq \sum_{\ell + 1 - t \leq i \leq t} {\ell + 1\choose i} {n - \ell - 1\choose \ell - i}.
\eeq
Here both ${\ell + 1\choose 2}, \ldots, {\ell + 1\choose \ell - 1}$ and ${n - \ell - 1\choose 0},\ldots, {n - \ell - 1\choose n - \ell - 1}$ are appropriate sequences.
To have the sums in the same form as in \eqref{eq:3.7}, set $w = n - k - \ell - i$ and use
${\ell + 1\choose i} = {\ell + 1\choose \ell + 1 - i}$.
This turns \eqref{eq:3.10} into
\beq
\label{eq:3.11}
\sum_{\ell + 1 - t \leq i \leq t} {\ell + 1\choose \ell + 1 - i} {k + w\choose k - i} \geq \sum_{\ell + 1 - t \leq i \leq t} {\ell + 1\choose \ell + 1 - i} {k + w\choose \ell - i}
\eeq
Then the values of $k - i$ cover the interval $[k - t, k + t - \ell - 1]$ and the values of $\ell - i$ the interval $[\ell - t, t - 1]$.
Now $\ell - t + k - t \leq k - 1 < k + w$.
Hence \eqref{eq:3.11} follows from \eqref{eq:3.7}.

The case $t < \frac{\ell + 1}{2}$ is much easier.
Namely, for $n \geq k + \ell + 1$, $n - \ell - 1 \geq k$.
Thus for $2 \leq t < \frac{\ell + 1}{2}$, $t - 1 < \ell - t \leq k - t$ implying $1 \leq t - 1 < k - t$ and $(t - 1) + (k - t) = k - 1 < n - \ell - 1$.
Consequently, ${n - \ell - 1\choose t - 1} < {n - \ell - 1\choose k - t}$.
This shows that increasing $\alpha_t$ while diminishing $\beta_{\ell + i - k}$ by the same amount increases the LHS of \eqref{eq:3.9}.
Eventually we get $\alpha_2 = \ldots = \alpha_{\ell - 1} = 1$, $\beta_2 = \ldots = \beta_{\ell - 1} = 0$ proving \eqref{eq:3.9}.
Even more, equality holds only in this setting which proves \eqref{eq:3.6} together with uniqueness.
\hfill $\square$

\section{The proof of Theorem \ref{th:1.5}}
\label{sec:4}

Let us fix $k \geq \ell \geq 1$, $n \geq k + \ell$ and define $\mathcal G_0 = \{U, V\}$ where $U$ and $V$ are two disjoint $\ell$-subsets of $[n]$.
Set $\mathcal F_0 = \left\{F \in {[n]\choose k}: F \cap U \neq \emptyset, F \cap V \neq \emptyset \right\}$.

Obviously $\mathcal F_0$ and $\mathcal G_0$ are cross-intersecting and
\beq
\label{eq:4.1}
|\mathcal F_0| + |\mathcal G_0| = {n\choose k} - 2 {n - \ell\choose k} + {n - 2\ell \choose k} + 2 =: h(n, k, \ell).
\eeq
Moreover, for $\ell \geq 2$ both $\mathcal F_0$ and $\mathcal G_0$ are non-trivial.

Let $\mathcal F \subset {[n]\choose k}$, $\mathcal G \subset {[n]\choose \ell}$ be cross-intersecting.
In the case $n = k + \ell$, $h(n, k, \ell) = {k + \ell \choose k}$ and the bound $|\mathcal F| + |\mathcal G| \leq {n \choose k + \ell}$ is very easy to prove.
Just note that $F \in \mathcal F$ implies $[n] \setminus F \notin \mathcal G$.
If $\ell = 1$ and $|\mathcal G| = r > 1$ then $|\mathcal F| \leq {n - r\choose k - r}$ is obvious.
Now ${n - r\choose k - r} + r \leq {n - 2\choose k - 2} + 2 = h(n, k, 1)$.
I.e., in the case $\ell = 1$, \eqref{eq:4.1} provides the maximum of $|\mathcal F| + |\mathcal G|$ for cross-intersecting families subject to $|\mathcal G| \geq 2$ without requiring that $\mathcal F$ is non-trivial.
Our main result shows that \eqref{eq:4.1} provides the maximum in general.

\begin{theorem}
\label{th:4.1}
Let $k \geq \ell \geq 2$, $n > k + \ell$.
Suppose that $\mathcal F \subset {[n]\choose k}$ and $\mathcal G\subset {[n]\choose \ell}$ are non-trivial and cross-intersecting.
Then
\beq
\label{eq:4.2}
|\mathcal F| + |\mathcal G| \leq h(n, k, \ell).
\eeq
Moreover, unless $k = \ell = 2$, up to symmetry $\mathcal F_0, \mathcal G_0$ are the only families achieving equality in \eqref{eq:4.2}.
\end{theorem}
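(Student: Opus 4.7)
The strategy is to combine shifting with Theorems \ref{th:3.3} and \ref{th:1.7}, using Fact \ref{fact:2.1} as the bridge that handles the case in which shifting would destroy non-triviality.

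First I would pass to a saturated pair $(\mathcal F,\mathcal G)$: enlarging each family to its transversal-maximal extension preserves both cross-intersection and non-triviality, and can only increase $|\mathcal F|+|\mathcal G|$. Then I would apply shifts $S_{ij}$ in succession, re-saturating in between. Two outcomes arise. If shifting can always be continued without either family becoming a star, the limit is shifted and non-trivial, and Theorem \ref{th:3.3} yields $|\mathcal F|+|\mathcal G|\leq g(n,k,\ell)$; a short Pascal-identity comparison shows $g(n,k,\ell)<h(n,k,\ell)$ for $n>k+\ell$, so this case contributes no extremal example. Otherwise some shift $S_{ij}$ would turn one family -- say $\mathcal G$ -- into a star. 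By Fact \ref{fact:2.1}(ii) every $G\in\mathcal G$ meets $\{i,j\}$, and by saturation plus the cross-uniformity analog of Claim \ref{cl:2.2}, every $k$-set containing $\{i,j\}$ belongs to $\mathcal F$; hence $|\mathcal F_{ij}|=\binom{n-2}{k-2}$, where $\mathcal F_{ij}=\{F\in\mathcal F:\{i,j\}\subset F\}$. The symmetric possibility (that $S_{ij}(\mathcal F)$ becomes a star) is handled identically with $k$ and $\ell$ swapped.

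Setting $X=[n]\setminus\{i,j\}$, I would decompose $\mathcal F=\mathcal F_{ij}\sqcup\mathcal F_i\sqcup\mathcal F_j\sqcup\mathcal F_\emptyset$ and $\mathcal G=\mathcal G_{ij}\sqcup\mathcal G_i\sqcup\mathcal G_j$ by intersection with $\{i,j\}$, and pass to the trace families $\mathcal A_1,\mathcal A_2\subset\binom{X}{k-1}$, $\mathcal A_3\subset\binom{X}{k}$, $\mathcal B_1,\mathcal B_2\subset\binom{X}{\ell-1}$, $\mathcal B_3\subset\binom{X}{\ell-2}$. Non-triviality of $\mathcal G$ forces $\mathcal B_1,\mathcal B_2\neq\emptyset$, and cross-intersection on $X$ is required exactly for the pairs $(\mathcal A_1,\mathcal B_2)$, $(\mathcal A_2,\mathcal B_1)$, $(\mathcal A_3,\mathcal B_1)$, $(\mathcal A_3,\mathcal B_2)$, $(\mathcal A_3,\mathcal B_3)$. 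Applying Theorem \ref{th:1.7} to the first two yields $|\mathcal A_i|+|\mathcal B_{3-i}|\leq\binom{n-2}{k-1}-\binom{n-\ell-1}{k-1}+1$ for $i=1,2$. For $\mathcal A_3$ I would split on whether $\mathcal B_1,\mathcal B_2$ admit a disjoint pair $(B_1,B_2)$: if so, inclusion-exclusion gives $|\mathcal A_3|\leq\binom{n-2}{k}-2\binom{n-\ell-1}{k}+\binom{n-2\ell}{k}$ and Theorem \ref{th:1.7} on $(\mathcal A_3,\mathcal B_3)$ bounds $|\mathcal B_3|$ (extremally $\mathcal B_3=\emptyset$); if not, $\mathcal B_1$ and $\mathcal B_2$ are themselves cross-intersecting on $X$ and a further application of Theorem \ref{th:1.7} tightens $|\mathcal B_1|+|\mathcal B_2|$ by enough to compensate. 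Summing with $|\mathcal F_{ij}|=\binom{n-2}{k-2}$ and collapsing via Pascal's identity recovers $h(n,k,\ell)$ exactly.

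For uniqueness, I would chase back through each inequality using the strengthened uniqueness clause of Theorem \ref{th:1.7}. Equality in $(\mathcal A_1,\mathcal B_2)$ and $(\mathcal A_2,\mathcal B_1)$ forces $\mathcal B_1=\{U'\}$ and $\mathcal B_2=\{V'\}$ to be singletons; equality in the $\mathcal A_3$-bound forces $U'\cap V'=\emptyset$ and $\mathcal B_3=\emptyset$; hence $\mathcal G=\{U,V\}$ with $U=\{i\}\cup U'$, $V=\{j\}\cup V'$ disjoint, and then $\mathcal F=\mathcal F_0$. The exceptional case $k=\ell=2$ inherits directly from the exception in Theorem \ref{th:1.7}'s uniqueness clause. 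The principal obstacle is the step handling $\mathcal A_3$ against the three competing families $\mathcal B_1,\mathcal B_2,\mathcal B_3$: Theorem \ref{th:1.7} is intrinsically a two-family statement, so one must choose transversals and split cases carefully to route the slack in each inequality into the correct additive constants without leaking elsewhere. The subcase in which some $\mathcal A_i$ (or $\mathcal B_3$) is empty must be checked separately via the non-triviality hypothesis on $\mathcal F$, which then forces $\mathcal A_3\neq\emptyset$.
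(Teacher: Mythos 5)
Your high-level plan (shift while guarding non-triviality via Fact~\ref{fact:2.1}, pass to the initial case via Theorem~\ref{th:3.3} and Proposition~\ref{prop:4.2}, otherwise decompose by intersection with $\{i,j\}$ and invoke Theorem~\ref{th:1.7}) is close in spirit to the paper's ``shifting ad extremis,'' but two of your steps have genuine gaps.

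First, the trade-off between $|\mathcal A_3|$ and $|\mathcal B_3|$ does not close as you describe. Bounding $|\mathcal A_3|$ by inclusion--exclusion against a disjoint pair $B_1\in\mathcal B_1$, $B_2\in\mathcal B_2$ is fine in isolation, but you then need to control $|\mathcal B_3|$ jointly, and Theorem~\ref{th:1.7} applied to $(\mathcal A_3,\mathcal B_3)$ only gives $|\mathcal A_3|+|\mathcal B_3|\leq\binom{n-2}{k}-\binom{n-\ell}{k}+1$; for $n$ not very close to $k+\ell$ this is strictly weaker than the target $\binom{n-2}{k}-2\binom{n-\ell-1}{k}+\binom{n-2\ell}{k}$, so the two bounds do not combine into the inequality your final Pascal collapse requires. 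The paper resolves exactly this difficulty by an induction on $k+\ell$: it replaces $\mathcal B_3=\mathcal G(i,j)$ by its shade in $\binom{X}{\ell-1}$, uses Sperner's theorem to show the shade grows by at least two, adjoins $\mathcal G(i,\overline j),\mathcal G(\overline i,j)$ to form a non-trivial $\widetilde{\mathcal G}$, and applies the $(k,\ell-1)$ case to the pair $\bigl(\mathcal F(\overline i,\overline j),\widetilde{\mathcal G}\bigr)$. Your ``if not'' branch (when $\mathcal B_1,\mathcal B_2$ are themselves cross-intersecting) is likewise only asserted, and the implied accounting among $\mathcal A_1,\mathcal A_2,\mathcal B_1,\mathcal B_2,\mathcal A_3,\mathcal B_3$ is not routed.

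Second, and more seriously, you omit the case in which $\mathcal F(\overline i,\overline j)=\mathcal A_3$ is a star or empty. Your claim that non-triviality of $\mathcal F$ forces $\mathcal A_3\neq\emptyset$ is false: a non-trivial family can have every member meeting $\{i,j\}$ without having a common element, and even when $\mathcal A_3\neq\emptyset$ it may be a star, in which case no inductive argument of the above kind applies. Handling this situation is the bulk of the paper's proof (their Case~B): one must show the pair is shifted on $[n]\setminus Z$ for a specific $4$-set $Z=\{i,j,x,y\}$, that $Z$ is a transversal of $\mathcal F$ (Fact~\ref{fact:4.11}), deduce the cross-intersection property localizes to a $(k+\ell)$-set (Lemma~\ref{lem:4.12}), pin down which triples in $Z$ are transversals (Proposition~\ref{prop:4.15}), and then finish by a different decomposition (by a single coordinate $j$). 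Without this your argument is incomplete precisely in the situation where the families are farthest from being shifted, which is where extremal examples like $\mathcal F_0,\mathcal G_0$ live.
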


Let us note that in the case $k = \ell = 2$ there are two more essentially different constructions:
$\mathcal F_1 = \mathcal G_1 = {[3]\choose 2}$ and $\mathcal F_2 = \left\{(1,2) (2,3), (3,4)\right\}$,
$\mathcal G_2 = \left\{(1,3), (2,3), (2,4)\right\}$.
For fixed $k, \ell$ and $n \to \infty$, $h(n, k, \ell) = (\ell^2 - o(1)) {n - 2\choose k - 2}$ while $g(n, k, \ell)$ from Theorem \ref{th:3.3} satisfies $g(n, k, \ell) = \left({\ell + 1\choose 2} - o(1)\right) {n - 2\choose k - 2}$.

For large $n$ and $k \geq 3$ these show that $h(n, k, \ell) > g(n, k, \ell)$.
To prove it for all $n > k + \ell$, $k \geq 3$ is not evident.

\setcounter{proposition}{1}
\begin{proposition}
\label{prop:4.2}
Let $n > k + \ell$, $k \geq 3$, $\ell \geq 2$.
Then
\beq
\label{eq:4.3}
g(n, k, \ell) < h(n, k, \ell).
\eeq
\end{proposition}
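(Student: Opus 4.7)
The inequality is a statement about two fully explicit binomial-coefficient expressions, so the plan is to evaluate both sides in closed form and reduce \eqref{eq:4.3} to a routine estimate. First I would expand $|\mathcal R|$ by partitioning $[n] = [\ell+1] \sqcup [\ell+2,n]$, obtaining
$|\mathcal R| = \binom{n}{k} - \binom{n-\ell-1}{k} - (\ell+1)\binom{n-\ell-1}{k-1}$, and hence $g(n,k,\ell) = |\mathcal R| + (\ell+1)$. Inclusion--exclusion immediately gives $|\mathcal F_0| = \binom{n}{k} - 2\binom{n-\ell}{k} + \binom{n-2\ell}{k}$, and $h(n,k,\ell) = |\mathcal F_0| + 2$.

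Subtracting the two expressions and applying Pascal's rule to $\binom{n-\ell}{k} = \binom{n-\ell-1}{k} + \binom{n-\ell-1}{k-1}$ makes the $\binom{n}{k}$ and $\binom{n-\ell-1}{k}$ terms cancel, leaving
$$h(n,k,\ell) - g(n,k,\ell) \;=\; \ell\binom{n-\ell-1}{k-1} \;-\; \Bigl(\binom{n-\ell}{k} - \binom{n-2\ell}{k}\Bigr) \;+\; 1 - \ell.$$
I would then telescope the bracketed difference twice: first writing $\binom{n-\ell}{k} - \binom{n-2\ell}{k} = \sum_{j=0}^{\ell-1}\binom{n-\ell-1-j}{k-1}$, then expanding each summand via $\binom{n-\ell-1}{k-1} - \binom{n-\ell-1-j}{k-1} = \sum_{i=0}^{j-1}\binom{n-\ell-2-i}{k-2}$, and finally swapping the order of summation to obtain the clean identity
$$h(n,k,\ell) - g(n,k,\ell) \;=\; \sum_{i=0}^{\ell-2}(\ell - 1 - i)\binom{n-\ell-2-i}{k-2} \;-\; (\ell-1).$$

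To conclude, it suffices to retain only the $i = 0$ summand and use the fact that $n > k+\ell$ forces $n-\ell-2 \geq k-1$, so $\binom{n-\ell-2}{k-2} \geq \binom{k-1}{k-2} = k-1$. This gives $h - g \geq (\ell-1)(k-1) - (\ell-1) = (\ell-1)(k-2)$, which is at least $1$ in the stated range $\ell \geq 2$, $k \geq 3$. The only step requiring any care is the double telescoping that produces the closed form above; the final estimate is entirely elementary and I do not anticipate a genuine obstacle. The boundary case $n = k+\ell+1$, $k=3$, $\ell=2$ already gives $h - g = 1$, showing that this crude bound is best possible in general.
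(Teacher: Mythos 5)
Your computation is correct; I checked the double-telescoped identity $h(n,k,\ell) - g(n,k,\ell) = \sum_{i=0}^{\ell-2}(\ell-1-i)\binom{n-\ell-2-i}{k-2} - (\ell-1)$ and the concluding estimate $h-g \ge (\ell-1)(k-2) \ge 1$, and both hold under the stated hypotheses. This is, however, a genuinely different route from the paper's. The paper never manipulates the closed forms for $g$ and $h$ directly: it instead proves the stronger combinatorial statement (Proposition \ref{prop:4.3}) that any non-trivial cross-intersecting pair $\mathcal F \subset \binom{[n]}{k}$, $\mathcal G \subset \binom{[2\ell]}{\ell}$ satisfies $|\mathcal F| + |\mathcal G| \le h(n,k,\ell)$ with strict inequality whenever $|\mathcal G| > 2$, the key step being a count of the $r$-transversals of $\mathcal G$ inside $[2\ell]$ (Lemma \ref{lem:4.4}); Proposition \ref{prop:4.2} then follows because the extremal shifted pair from Theorem \ref{th:3.3} has $\mathcal G = \binom{[\ell+1]}{\ell} \subset \binom{[2\ell]}{\ell}$ with $|\mathcal G| = \ell+1 \ge 3$. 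The paper's longer route buys reusable structure --- Lemma \ref{lem:4.4} and the cross-intersecting-on-$[2\ell]$ framework reappear in Case B of the proof of Theorem \ref{th:4.1} --- whereas your route is strictly elementary, self-contained, and yields an explicit formula for the gap $h-g$ as a by-product. For the inequality \eqref{eq:4.3} in isolation, yours is the cleaner argument.
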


Note that Theorem \ref{th:3.3} and \eqref{eq:4.3} imply Theorem \ref{th:4.1} for shifted (initial) families.
Nevertheless shifting (cf.\ Section \ref{sec:2}) plays a crucial role in the proof of Theorem \ref{th:4.1}.

Recall that $(i, j)$ and more generally $(x_1, \ldots, x_r)$ denote a set where we know that $i < j$, $x_1 < \ldots < x_r$.

Let us now prove \eqref{eq:4.3} in a more general form that will be useful later.

\begin{proposition}
\label{prop:4.3}
Suppose that $\mathcal F \subset {[n]\choose k}$, $\mathcal G \subset {[2\ell]\choose \ell}$ are non-trivial and cross-intersecting.
Let $n > k + \ell$, $k \geq \ell \geq 2$, $k \geq 3$.
Then
\beq
\label{eq:4.4}
|\mathcal F| + |\mathcal G| \leq h(n, k, \ell) \ \text{ and the inequality is strict unless } \ |\mathcal G| = 2.
\eeq
\end{proposition}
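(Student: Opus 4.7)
The key observation is that every $F \in \mathcal F$ intersects every $G \in \mathcal G \subset \binom{[2\ell]}{\ell}$, so $T := F \cap [2\ell]$ must be a transversal of $\mathcal G$ in $[2\ell]$; partitioning $\mathcal F$ by $T$ gives
\[
|\mathcal F| \;\leq\; \sum_{T \subset [2\ell] \text{ transversal of } \mathcal G,\, |T| \leq k} \binom{n-2\ell}{k-|T|}.
\]
The plan is to compare $\mathcal G$ against the baseline $\mathcal G_0 = \{U, V\}$ with $U = [\ell]$, $V = [\ell+1, 2\ell]$ (which attains equality in \eqref{eq:4.4}), splitting the argument according to whether $\mathcal G$ itself contains such a complementary pair in $[2\ell]$.

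If $|\mathcal G| = 2$, non-triviality forces the two members to be disjoint and hence complementary in $[2\ell]$, so direct inclusion--exclusion yields $|\mathcal F| + |\mathcal G| \leq h(n,k,\ell)$ with equality for $\mathcal F = \mathcal F_0$. If $|\mathcal G| \geq 3$ and $\mathcal G$ still contains some complementary pair $\{U, V\}$, write $\mathcal G = \{U, V\} \cup \mathcal G'$ with $\mathcal G' \neq \emptyset$. For each $G \in \mathcal G'$ the $\ell$-set $W_G := [2\ell] \setminus G$ meets both $U$ and $V$ (since $G \notin \{U, V\}$) yet is disjoint from $G$, so $W_G$ is a transversal of $\{U, V\}$ but not of $\mathcal G$, and distinct $G$'s yield distinct $W_G$'s. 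Each $W_G$ accounts for $\binom{n-2\ell}{k-\ell}$ lost $k$-sets (those $F$ with $F \cap [2\ell] = W_G$). For $k > \ell$ and $n > k + \ell$ one has $\binom{n-2\ell}{k-\ell} \geq 2$, which after subtracting the loss from the $\{U,V\}$-bound and adding back $|\mathcal G'|$ to $|\mathcal G|$ yields the strict inequality. In the border case $k = \ell \geq 3$ I would supplement with one further lost transversal $T \subsetneq W_G$ of size $\ell - 1$, obtained by deleting an element from the larger of $W_G \cap U$, $W_G \cap V$ (which has size $\geq 2$ since $\ell \geq 3$); this contributes $n - 2\ell \geq 1$ additional lost $k$-sets, enough for strictness.

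The harder case is $|\mathcal G| \geq 3$ with $\mathcal G$ intersecting in $[2\ell]$ (no complementary pair). I would augment: fix some $G_0 \in \mathcal G$, let $V_0 := [2\ell] \setminus G_0 \notin \mathcal G$, and set $\tilde{\mathcal G} := \mathcal G \cup \{V_0\}$ together with $\tilde{\mathcal F} := \{F \in \mathcal F : F \cap V_0 \neq \emptyset\}$. Then $\tilde{\mathcal G}$ contains the complementary pair $\{G_0, V_0\}$, $|\tilde{\mathcal G}| \geq 4$, and $\tilde{\mathcal F}, \tilde{\mathcal G}$ are cross-intersecting and non-trivial; the previous case then gives a strict bound on $|\tilde{\mathcal F}| + |\tilde{\mathcal G}|$. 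Setting $X := |\mathcal F| - |\tilde{\mathcal F}|$, one has $|\mathcal F| + |\mathcal G| = |\tilde{\mathcal F}| + X + |\tilde{\mathcal G}| - 1$, so the task reduces to controlling $X$. The $F$'s counted by $X$ satisfy $F \cap [2\ell] \subset G_0$, and $F \cap [2\ell]$ must be a transversal of the trace family $\{G \cap G_0 : G \in \mathcal G \setminus \{G_0\}\}$, which is non-trivial on the $\ell$-set $G_0$ (non-triviality following from $\bigcap \mathcal G = \emptyset$).

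The main obstacle is this bookkeeping in the intersecting case: one must show $X$ is small enough relative to the strict margin produced by the complementary-pair analysis to preserve overall strictness. Standard EKR-type bounds applied to $\mathcal G$ on $[2\ell]$ fall at the boundary parameter $n = 2k = 2\ell$, where Hilton--Milner does not apply. I expect to need either (i) an averaging argument over the choice of $G_0 \in \mathcal G$, (ii) an induction on $\ell$ by applying Proposition~\ref{prop:4.3} itself to the trace family on $G_0$, or (iii) the extended shifting technique announced in the abstract, restricted to $[2\ell]$ so as to preserve both the intersecting and non-trivial properties of $\mathcal G$.
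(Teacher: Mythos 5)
Your starting observation --- partition $\mathcal F$ by $T=F\cap[2\ell]$, which must be a transversal of $\mathcal G$ in $[2\ell]$ --- is exactly the paper's first step, and your complementary-pair analysis is sound as far as it goes. But the intersecting case ($|\mathcal G|\ge 3$ with no complementary pair inside $[2\ell]$) is, as you yourself flag, left open, and this is a genuine gap rather than a loose end. Your augmentation $\tilde{\mathcal G}=\mathcal G\cup\{V_0\}$, $\tilde{\mathcal F}=\{F\in\mathcal F: F\cap V_0\neq\emptyset\}$ has two unaddressed problems: non-triviality of $\tilde{\mathcal F}$ is not automatic (you pass to a proper subfamily of $\mathcal F$, which can perfectly well become a star), and even granting it, you would need a quantitative bound showing that $X=|\{F\in\mathcal F: F\cap[2\ell]\subset G_0\}|$ sits below the strictness margin produced by the complementary-pair case; mere non-triviality of the trace family $\{G\cap G_0\}$ on $G_0$ supplies no such control at the tight parameters $n=k+\ell+1$.

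The paper closes the argument without any case split. Writing $t_r=|\mathcal T_r|$ for the number of $r$-transversals of $\mathcal G$ inside $[2\ell]$, one has $t_1=0$ by non-triviality and $t_\ell=\binom{2\ell}{\ell}-|\mathcal G|$ for any $\mathcal G\subset\binom{[2\ell]}{\ell}$ (the non-transversal $\ell$-sets are precisely the complements $[2\ell]\setminus G$, $G\in\mathcal G$). The engine is Lemma~\ref{lem:4.4}: for $2\le r<\ell$, $t_r\le\binom{2\ell}{r}-2\binom{\ell}{r}$, and this holds for \emph{every} non-trivial $\mathcal G$, intersecting or not. Its proof is a short double count of non-transversals: for each of the $2\ell$ choices of $x\in[2\ell]$, non-triviality yields $G\in\mathcal G$ with $x\notin G$, and every $R\in\binom{[2\ell]\setminus G}{r}$ with $x\in R$ is a non-transversal; summing over $x$ and dividing by the worst-case multiplicity $r$ gives at least $2\ell\binom{\ell-1}{r-1}\big/r=2\binom{\ell}{r}$ non-transversals. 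Feeding these bounds into $|\mathcal F|\le\sum_{2\le r\le k}t_r\binom{n-2\ell}{k-r}$ and applying Vandermonde gives \eqref{eq:4.4} directly with surplus $(|\mathcal G|-2)\bigl(\binom{n-2\ell}{k-\ell}-1\bigr)$. Note that your map $G\mapsto W_G=[2\ell]\setminus G$ is exactly the $r=\ell$ instance of this count; what your proposal lacks is the same bound at $r<\ell$, and once you have it the whole argument becomes uniform and your case split (together with its unresolved intersecting branch and your options (i)--(iii)) simply dissolves.
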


\begin{proof}
Let us define the family $\mathcal T_r$ of $r$-transversals of $\mathcal G$ by
$$
\mathcal T_r = \left\{ T \in {[2\ell]\choose r} : T \cap G \neq \emptyset \ \text{ for all } \ G \in \mathcal G\right\}, \quad 1 \leq r \leq k.
$$
Set $t_r = |\mathcal T_r|$.
Note that $t_r = {2\ell\choose r}$ for $\ell < r \leq k$.
The non-triviality of $\mathcal G$ implies $t_1 = 0$.
The maximality of $\mathcal F$ implies
\beq
\label{eq:4.5}
t_\ell = {2\ell\choose \ell} - |\mathcal G|.
\eeq

\setcounter{lemma}{3}
\begin{lemma}
\label{lem:4.4}
For $2 \leq r < \ell$,
\beq
\label{eq:4.6}
t_r \leq {2\ell \choose r} - 2 {\ell \choose r}.
\eeq
\end{lemma}

Let us prove \eqref{eq:4.4} assuming \eqref{eq:4.6}.
Note that the maximality of $|\mathcal F| + |\mathcal G|$ implies
$$
|\mathcal F| = \sum_{2\leq r \leq k} t_r {n - 2\ell\choose k - r}
$$
and using \eqref{eq:4.5}
$$
|\mathcal F| + |\mathcal G| = |\mathcal G| + \sum_{\substack{2\leq r < k\\ r\neq \ell}} t_r {n - k - \ell\choose k - r} + \left({2\ell \choose \ell} - |\mathcal G|\right) {n - 2\ell\choose k - \ell}.
$$
Setting $g = |\mathcal G| - 2$ and noting that for $\mathcal G_0 = \left\{[\ell], [\ell + 1, \ell]\right\}$ equality holds in \eqref{eq:4.6}
$$
|\mathcal F| + |\mathcal G| \leq h(n, k, \ell) - g \left({n - 2\ell\choose k - \ell} - 1\right) \leq h(n, k, \ell).
$$
\end{proof}

To prove \eqref{eq:4.6} we show
\beq
\label{eq:4.7}
{2\ell \choose r} - t_r \geq 2 {\ell \choose r},
\eeq
i.e., there are at least $2{\ell\choose r}$ subsets $R \in {[2\ell]\choose r}$ that are not transversals (covers) of~$\mathcal G$.
To this end pick an arbitrary $x \in [\ell]$ and a $G \in \mathcal G$ with $x \notin G$.
Then all ${\ell - 1\choose r - 1}$ subsets $R \in {[2\ell]\setminus G \choose r}$ satisfying $x \in R$ are non-covers.
Summing this over the $2\ell$ choices for $x$ we infer that there are at least $2\ell {\ell - 1\choose r - 1}\bigr/r = 2 {\ell \choose r}$ non-covers, as desired.
This concludes the proof of \eqref{eq:4.6}.

\smallskip
To prove \eqref{eq:4.1} we introduce a special way of simultaneously transforming the two families.
We call it \emph{shifting ad extremis}.
Let us say that two families $\mathcal F$ and $\mathcal G$ are \emph{CI} if they are \emph{cross-intersecting}.
We proved in \cite{F3} that this property is maintained by shifting, that is $S_{ij}(\mathcal F)$ and $S_{ij}(\mathcal G)$ are
CI as well.
Consequently we can keep on applying $S_{ij}$ simultaneously for arbitrary pairs $1 \leq i < j \leq n$.
However it might happen that $S_{ij}(\mathcal F)$ or $S_{ij}(\mathcal G)$ ceases to be non-trivial.

Throughout the whole proof we keep assuming that $|\mathcal F| + |\mathcal G|$ is maximal.
Consequently, whenever $\mathcal G(\overline i, \overline j) = \emptyset$ its counterpart $\mathcal F(i, j)$ must be \emph{full}, i.e., $\mathcal F(i, j) = \left\{F \in {[n]\choose k} : (i, j) \subset F\right\}$.
Similarly, if $\mathcal F(\overline i, \overline j) = \emptyset$ then $\mathcal G(i,j)$ is full.

The important thing about the shifting process is that if $S_{ij}(\mathcal F)$ or $S_{ij}(\mathcal G)$ is trivial (a star) then we renounce at $S_{ij}$ and choose an arbitrary different pair $(i', j')$ and perform $S_{i'j'}$.
We should stress three things.
First, by abuse of notation, we keep denoting the \emph{current} families by $\mathcal F$ and $\mathcal G$.
Second, since $i < j$, the sum $\sum\limits_{F \in \mathcal F} \sum\limits_{x \in F} x + \sum\limits_{G \in \mathcal G} \sum\limits_{y \in G} y$ keeps decreasing.
Third, we return to previously failed pairs $1 \leq i < j \leq n$, because it might happen that at a later stage in the process $S_{ij}$ does not destroy non-triviality any longer.
The important thing is that shifting ad extremis eventually produces two non-trivial families $\mathcal F$ and $\mathcal G$ that are CI and for each $1 \leq i < j \leq n$ one of (a), (b) and (c) holds.

\noindent
(a) \ $S_{ij}(\mathcal F) = \mathcal F$ \ and \ $S_{ij}(\mathcal G) = \mathcal G$.

\noindent
(b) \ $S_{ij}(\mathcal F)$ \ is a star.

\noindent
(c) \ $S_{ij}(\mathcal G)$ \ is a star.

Then we say that $(\mathcal F, \mathcal G)$ is \emph{shifted ad extremis}.
We are going to see below that this is a very strong property.

For $\mathcal H = \mathcal F$ or $\mathcal G$ let us define an ordinary graph $\mathcal T_2(\mathcal H)$ where $(i, j)$ is an edge of $\mathcal H$ if $\mathcal H(\overline i, \overline j) = \emptyset$.
By the maximality of $|\mathcal F| + |\mathcal G|$, for $\mathcal K = \{\mathcal F, \mathcal G\} \setminus \{\mathcal H\}$, $\mathcal K(i, j)$ is full.

\setcounter{fact}{4}
\begin{fact}
\label{fact:4.6}
$\mathcal T_2(\mathcal F)$ and $\mathcal T_2(\mathcal G)$ are cross-intersecting.
\end{fact}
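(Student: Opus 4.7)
The plan is to argue by contradiction: suppose $\{i_1,j_1\}$ is an edge of $\mathcal T_2(\mathcal F)$ and $\{i_2,j_2\}$ is an edge of $\mathcal T_2(\mathcal G)$ with $\{i_1,j_1\}\cap\{i_2,j_2\}=\emptyset$, and then I will directly exhibit $F\in\mathcal F$, $G\in\mathcal G$ with $F\cap G=\emptyset$, contradicting the CI property.

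The engine is the maximality clause already observed in the text: if $\mathcal F(\overline{i_1},\overline{j_1})=\emptyset$ then by maximality $\mathcal G(i_1,j_1)$ is \emph{full}, meaning every $\ell$-subset of $[n]$ containing $\{i_1,j_1\}$ lies in $\mathcal G$. Symmetrically, from $(i_2,j_2)\in\mathcal T_2(\mathcal G)$ it follows that every $k$-subset of $[n]$ containing $\{i_2,j_2\}$ lies in $\mathcal F$. First I would pick $G\in\mathcal G$ so that $\{i_1,j_1\}\subset G$ and $G\cap\{i_2,j_2\}=\emptyset$; this amounts to choosing $G\setminus\{i_1,j_1\}$ as any $(\ell-2)$-subset of $[n]\setminus\{i_1,j_1,i_2,j_2\}$, which is possible because $n-4\geq\ell-2$ (indeed $n>k+\ell\geq 2\ell$). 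Next I would pick $F\in\mathcal F$ with $\{i_2,j_2\}\subset F$ and $F\cap G=\emptyset$; this amounts to choosing $F\setminus\{i_2,j_2\}$ as a $(k-2)$-subset of $[n]\setminus(\{i_2,j_2\}\cup G)$, and the required inequality $n-\ell-2\geq k-2$ is precisely the hypothesis $n\geq k+\ell$, which we have (with room to spare, since $n>k+\ell$).

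With $F$ and $G$ so constructed, $F$ is disjoint from $G$, contradicting the cross-intersecting property of $\mathcal F$ and $\mathcal G$, and the proof of Fact~\ref{fact:4.6} is complete. The only delicate point, and really the main thing to check, is the arithmetic that guarantees the two selections above are non-vacuous; this is the sole place the strict inequality $n>k+\ell$ (and the assumption $\ell\geq 2$) is used. No shifting or saturation machinery beyond the already-stated maximality consequence is needed.
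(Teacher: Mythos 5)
Your argument is correct and is essentially the same as the paper's: both invoke the maximality of $|\mathcal F|+|\mathcal G|$ to conclude that the relevant link families $\mathcal G(i_1,j_1)$ and $\mathcal F(i_2,j_2)$ are full, and then use $n\geq k+\ell$ to exhibit a disjoint pair $G\in\mathcal G$, $F\in\mathcal F$, contradicting cross-intersection. You are merely a bit more explicit about the greedy two-step selection and the counting that makes it possible, whereas the paper asserts the existence of $G$ and $H$ in one stroke.
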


\begin{proof}
Suppose the contrary and fix $(i, j) \in \mathcal T_2(\mathcal F)$ and $(a, b) \in \mathcal T_2(\mathcal G)$ that are disjoint.
By $n \geq k + \ell$ we can find $G \in {[n]\choose \ell}$ and $H \in {[n]\choose k}$ satisfying $G \cap \{i, j, a, b\} = (i, j)$, $H\cap \{i, j, a, b\} = (a, b)$ and $H \cap G = \emptyset$.

By the maximality of $|\mathcal G| + |\mathcal F|$, $G \in \mathcal G$ and $H \in \mathcal F$, a contradiction.
\end{proof}

We should note that if (b) holds for $(i, j)$ then $(i, j) \in \mathcal T_2(\mathcal F)$, but not necessarily vice versa.

If (a) holds for all $1 \leq i < j \leq n$ then $\mathcal F$ and $\mathcal G$ are initial and $|\mathcal F| + |\mathcal G| \leq g(n, k, \ell)$ follows from Theorem \ref{th:3.3}.
Consequently we may assume that (b) or (c) holds for at least one pair $(i, j)$.

Let us fix such an $(i, j)$ with $S_{ij}(\mathcal G)$ being a star.
The same argument would apply to the case (b) as well.

Since $\mathcal G(\overline i, \overline j) = \emptyset$ but $\mathcal G(\overline i)$ and $\mathcal G(\overline j)$ are non-empty,we can choose $K, H \in  \mathcal G$ with $K \cap (i, j) = \{i\}$, $H \cap \{i, j\} = \{j\}$.
Let us fix these $K$ and $H$ maximizing $|K \cap H|$.
In view of Fact \ref{fact:2.1}, $|K \cap H| \leq \ell - 2$.

\setcounter{lemma}{5}
\begin{lemma}
\label{lem:4.7}
Suppose that $x \neq i$, $y \neq j$ but $x \in K\setminus H$, $y \in H\setminus K$.
Then $\{x, y\}$ is in {\rm (c)}.
\end{lemma}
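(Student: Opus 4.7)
The plan is to argue by contradiction using the trichotomy guaranteed by shifting ad extremis: every pair satisfies (a), (b), or (c). Assume without loss of generality that $x<y$ (the opposite ordering is handled symmetrically). Suppose $\{x,y\}$ is not in (c); I will rule out both (a) and (b). The key lever in each case is the maximality of $|K \cap H|$ among pairs in $\mathcal{G}$ with $K \cap \{i,j\} = \{i\}$ and $H \cap \{i,j\} = \{j\}$.

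If (a) holds, then $S_{xy}(\mathcal{G}) = \mathcal{G}$. Since $y \in H \setminus K$ gives $y \in H$ and $x \in K \setminus H$ gives $x \notin H$, invariance forces $H' := (H \setminus \{y\}) \cup \{x\} \in \mathcal{G}$. The hypotheses $x \ne i$ and $y \ne j$ ensure $H' \cap \{i,j\} = \{j\}$, so $H'$ is admissible in the maximization. But $y \notin K$ and $x \in K$ yield $|K \cap H'| = |K \cap H| + 1$, contradicting maximality.

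If (b) holds, then $S_{xy}(\mathcal{F})$ is a star, so Fact \ref{fact:2.1}(ii) places $\{x,y\}$ in $\mathcal{T}_2(\mathcal{F})$. Meanwhile $(i,j) \in \mathcal{T}_2(\mathcal{G})$ by the starting assumption that $S_{ij}(\mathcal{G})$ is a star. Fact \ref{fact:4.6} then requires $\{x,y\} \cap \{i,j\} \ne \emptyset$. However, $x \in K$ with $j \notin K$ forces $x \ne j$, and combined with $x \ne i$ this gives $x \notin \{i,j\}$; the symmetric argument (using $y \in H$ and $i \notin H$) gives $y \notin \{i,j\}$. Thus $\{x,y\} \cap \{i,j\} = \emptyset$, contradicting Fact \ref{fact:4.6}. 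Hence $\{x,y\}$ must lie in (c).

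No deep obstacle arises; the only subtlety is bookkeeping about the ordering of $x$ and $y$. For $y<x$ one instead applies $S_{yx}$-invariance to $K$ rather than to $H$, producing $K' := (K \setminus \{x\}) \cup \{y\} \in \mathcal{G}$ with $K' \cap \{i,j\} = \{i\}$ (using $y \ne i$ since $i \notin H \ni y$) and $|K' \cap H| > |K \cap H|$, giving the same contradiction. Otherwise the proof is a direct one-line application of the maximality of $|K \cap H|$ and of Fact \ref{fact:4.6}; the lemma is really the first concrete payoff of the shifted-ad-extremis framework, propagating the (c)-property from the single pair $(i,j)$ to many nearby pairs.
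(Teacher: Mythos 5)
Your proof is correct and follows essentially the same route as the paper: rule out (a) by noting that invariance under $S_{xy}$ (or $S_{yx}$ when $y<x$) would force $H'=(H\setminus\{y\})\cup\{x\}$ (resp.\ $K'$) into $\mathcal G$, contradicting the maximality of $|K\cap H|$; rule out (b) via Fact~\ref{fact:4.6} since $\{x,y\}\cap\{i,j\}=\emptyset$. The paper phrases the (a)-step directly ($H'\notin\mathcal G$ hence $S_{xy}(\mathcal G)\ne\mathcal G$) rather than by contradiction, but the content is identical.
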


\begin{proof}
To simplify notation assume $x < y$ for the proof.
The case $y < x$ can be treated in the same way.
Since $S_{xy}(H) =: H' = (H \setminus \{y\}) \cup \{x\}$ and $|H' \cap K| = |H \cap K| \cup \{x\}$, $H' \notin \mathcal G$.
Consequently, $(x,y)$ cannot be of type (a).
In view of $(i, j ) \cap (x, y) = \emptyset$, it is not of type (b) either.
\end{proof}

At this point let us clarify our plan for proving \eqref{eq:4.1}.
There is a Plan A and a Plan~B.
Plan A is to find a pair $(i,j)$ of type (c) with the additional property that $\mathcal F(\overline i, \overline j)$ is non-trivial.
Then use induction.

Accordingly Plan B relates to the case that no such $(i,j)$ exists.
In this case we show that $\mathcal F$ and $\mathcal G$ are of a rather restricted type, e.g., they are shifted on $[n] \setminus \{i, j, x, y\}$, $\{i, j, x, y\}$ is a transversal for $\mathcal F$ etc.
Eventually we use these structural properties to prove \eqref{eq:4.1}.

\medskip
\noindent
\boxed{\text{\bf Case\ A.}}
There are $1 \leq i < j \leq n$ so that $(i, j)$ is of type (c) and $\mathcal F(\overline i, \overline j)$ is non-trivial.

\smallskip
We assume that \eqref{eq:4.1} has been proved for all pairs $k'\geq \ell' \geq 2$ with $k'+ \ell' < k + \ell$ and apply induction.
As noted before, for $\ell = 1$ and $k \geq 2$, \eqref{eq:4.1} holds whenever $\mathcal F, \mathcal G$ are CI and $|\mathcal G| \geq 2$.
This can serve as a base for the induction.

To prove \eqref{eq:4.1} we first note that
\beq
\label{eq:4.7m}
|\mathcal F(i, j)| + |\mathcal G(\overline i, \overline j)| = {n - 2\choose k - 2}.
\eeq
Applying Theorem \ref{th:1.7} to the non-empty CI families $\left(\mathcal F(i, \overline j), \mathcal G(\overline i, j)\right)$ and
$\left(\mathcal F(\overline i, j), \mathcal G(i, \overline j)\right)$ yields
\begin{align}
\label{eq:4.8}
\left|\mathcal F(i, \overline j)\right| + \left|\mathcal G(\overline i, j)\right| &\leq {n - 2\choose k - 1} - {n - \ell - 1\choose k - 1} + 1,\\
\label{eq:4.9}
\left|\mathcal F(\overline i, j)\right| + \left|\mathcal G(i, \overline j)\right| &\leq {n - 2\choose k - 1} - {n - \ell - 1\choose k - 1} + 1.
\end{align}
Moreover, the inequalities are strict unless $\left|\mathcal G(\overline i, j)\right| = 1$, $\left|\mathcal G(i, \overline j)\right| = 1$, respectively.
Finally, we need to prove
\beq
\label{eq:4.10}
\left|\mathcal F(\overline i, \overline j)\right| + \left|\mathcal G(i, j)\right| \leq {n - 2\choose k} - 2{n - \ell - 1\choose k} + {n - 2\ell\choose k}.
\eeq
To prove \eqref{eq:4.10} using the induction hypothesis we construct a non-trivial family $\widetilde{\mathcal G} \subset {[n]\setminus (i, j)\choose \ell - 1}$ satisfying $\bigl|\widetilde{\mathcal G}\bigr| \geq |\mathcal G(i, j)| + 2$.

First note that the non-triviality of $\mathcal G$ and $\mathcal G(\overline i, \overline j) = \emptyset$
and $(i, j)$ being of type (c)
imply that $\mathcal G(\overline i, j)$ and $\mathcal G(i, \overline j)$ are disjoint and $\mathcal G(\overline i, j) \cup \mathcal G(i, \overline j) \cup \mathcal G(i, j)$ is non-trivial on $[n] \setminus (i, j)$.
In the case $\mathcal G(i, j) = \emptyset$, we set $\widetilde{\mathcal G} = \mathcal G(\overline i, j) \cup \mathcal G(i, \overline j)$ and apply the induction hypothesis to the pair $\left(\mathcal F(\overline i, \overline j), \widetilde{\mathcal G}\right)$.
Noting $\bigl|\widetilde{\mathcal G}\bigr| \geq 2$, \eqref{eq:4.10} follows.
If $\mathcal G(i, j) \neq \emptyset$ let $\widehat{\mathcal G}$ be its \emph{shade},
$$
\widehat{\mathcal G} = \left\{\widehat G \in {[n]\setminus (i,j)\choose \ell - 1} : \exists G \in \mathcal G(i, j), G \subset \widehat G\right\}.
$$
By Sperner's Shadow Theorem,
\beq
\label{eq:4.11}
\bigl|\widehat{\mathcal G}\bigr| \geq \left|\mathcal G(i, j)\right| \frac{n - \ell}{\ell - 1}.
\eeq

We define $\widetilde{\mathcal G} = \widehat{\mathcal G} \cup \mathcal G(\overline i, j) \cup \mathcal G(i, \overline j)$.
Obviously $\widetilde{\mathcal G} \subset {[n]\setminus (i, j)\choose \ell - 1}$ and the pair $\left(\mathcal F(\overline i, \overline j), \widetilde{\mathcal G}\right)$ is CI.
The final piece to prove is
\beq
\label{eq:4.12}
\bigl|\widetilde{\mathcal G}\bigr| \geq |\mathcal G(i, j)| + 2.
\eeq
Since for $n > k + \ell$ every $(\ell - 2)$-subset of $[n] \setminus (i, j)$ is contained in $(n - 2) - (\ell - 2) > k \geq \ell$ subsets of size $\ell - 1$, $\bigl|\widehat{\mathcal G}\bigr| \geq \ell + 1$.
Thus in proving \eqref{eq:4.12} we may assume $|\mathcal G(i, j)| \geq \ell$.
From \eqref{eq:4.11} we infer
$$
\bigl|\widetilde{\mathcal G}\bigr| \geq |\mathcal G(i, j) | + |\mathcal G(i,j)| \frac{n - 2\ell + 1}{\ell - 1} \geq |\mathcal G(i, j)| + \frac{\ell}{\ell - 1} (n - 2\ell + 1) > |\mathcal G(i, j)| + 1
$$
proving \eqref{eq:4.12}. \hfill $\square$

\begin{rema}
The above proof shows that \eqref{eq:4.1} holds in the case $n = k + \ell + 1$ even if $\mathcal F(\overline i, \overline j)$ is trivial.
Indeed, the RHS of \eqref{eq:4.10} is ${k + \ell - 1\choose k} - 2$.
Above we constructed $\widetilde{\mathcal G} \in {[n]\choose \ell - 1}$ so that $\mathcal F(\overline i, \overline j)$ and $\widetilde{\mathcal G}$ are CI on the $(k + \ell - 1)$-set $[n]\setminus (i, j)$.
Thus $|\mathcal F(\overline i)| + \bigl|\widetilde{\mathcal G}\bigr| \leq {k + \ell - 1\choose k}$.
Hence \eqref{eq:4.12} implies \eqref{eq:4.10}.
\end{rema}

From now on we assume $n \geq k + \ell + 2$.

\medskip
\noindent
\boxed{\text{\bf Case\ B.}} For all pairs $(i, j)$ of type (c), $\mathcal F(\overline i, \overline j)$ is a star.

Let $z(i, j)$ denote an element common to all members of $\mathcal F(\overline i, \overline j)$.
Let us set $Z = \{i, j, x, y\}$ from Lemma \ref{lem:4.7}.
We claim that $(a,b)$ is of type (a) for all $(a, b) \subset [n]\setminus Z$.

First note that $(a, b)$ is not of type (b) because of Fact \ref{fact:4.6}.
Should it be of type (c), we infer $(a, b) \in \mathcal T_2(\mathcal G)$ whence all $k$-sets $F \subset [n]$ satisfying $F \cap \{i, j, a, b\} = (a, b)$ are in $\mathcal F(\overline i, \overline j)$.
Since the same holds for all $k$-sets with $F \cap \{i, j, x, y\} = \{x, y\}$, $\mathcal F(\overline i, \overline j)$ is non-trivial, a contradiction.

Note that via Lemma \ref{lem:4.7}, $|K \cap H| = \ell - 2$ follows as well.
Set $W = [n]\setminus Z = (w_1, \ldots, w_{n - 4})$.
Define $\widetilde K = \left\{i, x, w_1, \ldots, w_{\ell - 2}\right\}$,
$\widetilde H = \left\{j, y, w_1, \ldots, w_{\ell - 2}\right\}$.
Since all pairs in $W$ are of type (a), $K, H \in \mathcal G$ imply $\widetilde K, \widetilde H \in \mathcal G$.
WLOG assume $K = \widetilde K$, $H = \widetilde H$.

\setcounter{fact}{6}
\begin{fact}
\label{fact:4.8}
If $|L \cap Z| \leq 2$ for some $L \in \mathcal G$ then $L \cap Z = \{i, x\}$ or $\{j, y\}$.
\end{fact}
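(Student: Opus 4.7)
The plan is to first upgrade the hypothesis $|L\cap Z|\le 2$ to an equality. Since $(i,j)$ is of type (c), Fact \ref{fact:2.1} yields that every $L\in\mathcal G$ meets $\{i,j\}$, while $\mathcal G(i)\cap\mathcal G(j)=\emptyset$ says $L$ cannot contain both. Hence $|L\cap\{i,j\}|=1$. By Lemma \ref{lem:4.7}, $(x,y)$ is also of type (c), so the same argument gives $|L\cap\{x,y\}|=1$. Therefore $|L\cap Z|\ge 2$ automatically, and the hypothesis forces $|L\cap Z|=2$ with $L\cap Z$ being one of the four pairs $\{i,x\}$, $\{i,y\}$, $\{j,x\}$, $\{j,y\}$. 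The remaining task is to exclude the ``mixed'' cases $\{i,y\}$ and $\{j,x\}$; I will do $\{i,y\}$, and $\{j,x\}$ follows by swapping $i\leftrightarrow j$ and $x\leftrightarrow y$.

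Suppose for contradiction some $L\in\mathcal G$ has $L\cap Z=\{i,y\}$, and write $L=\{i,y\}\cup L'$ with $L'\subset W$ and $|L'|=\ell-2$. The paragraph preceding Fact \ref{fact:4.8} established that every pair $(a,b)\subset W$ is of type (a), i.e.\ $S_{ab}(\mathcal G)=\mathcal G$. Unpacking this: whenever $a,b\in W$ with $a<b$, $a\notin L$, $b\in L$, the set $(L\setminus\{b\})\cup\{a\}$ must already lie in $\mathcal G$ (otherwise $S_{ab}$ would add a new element, contradicting $S_{ab}(\mathcal G)=\mathcal G$). Iterating such shifts purely within $W$, I push $L'$ down to the minimum $(\ell-2)$-subset of $W$ in the shifting order, namely $\{w_1,\ldots,w_{\ell-2}\}$. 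This yields
$$L^{*}:=\{i,y,w_1,\ldots,w_{\ell-2}\}\in\mathcal G.$$

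Finally, I invoke the extremal choice of $(K,H)$. Since $j\notin L^{*}$ and $i\in L^{*}$, the pair $(L^{*},H)\in\mathcal G\times\mathcal G$ satisfies $L^{*}\cap(i,j)=\{i\}$ and $H\cap(i,j)=\{j\}$, so it is a candidate in the maximization that defines $(K,H)$. But
$$L^{*}\cap H=\{y,w_1,\ldots,w_{\ell-2}\},$$
which has size $\ell-1$, strictly larger than $|K\cap H|=\ell-2$. This contradicts the maximality of $|K\cap H|$, completing the case $L\cap Z=\{i,y\}$; the case $\{j,x\}$ follows symmetrically. The only step with genuine content is the shifting step in the middle paragraph, and even there the real work has already been carried out in Lemma \ref{lem:4.7} and in the verification that every $W$-pair is of type (a); no step is expected to be a serious obstacle.
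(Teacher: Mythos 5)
Your proposal follows the paper's own argument almost line for line: use Fact~\ref{fact:2.1} (for the two type-(c) pairs $(i,j)$ and $(x,y)$) to reduce to ruling out $L\cap Z\in\bigl\{\{i,y\},\{j,x\}\bigr\}$, push $L$ by within-$W$ shifts to $L^{*}=\{i,y,w_1,\ldots,w_{\ell-2}\}\in\mathcal G$, and derive a contradiction; the paper compares $L^{*}$ with $K=\{i,x,w_1,\ldots,w_{\ell-2}\}$ and notes $K\setminus\{x\}=L^{*}\setminus\{y\}\in\mathcal G(x)\cap\mathcal G(y)$, contradicting Fact~\ref{fact:2.1}(i) for the type-(c) pair $(x,y)$, while you compare $L^{*}$ with $H$ and invoke the maximality of $|K\cap H|$ --- the same contradiction in two dressings. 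One correction to your opening paragraph: $\mathcal G(i)\cap\mathcal G(j)=\emptyset$ does \emph{not} by itself forbid a single $L$ from containing both $i$ and $j$ (it forbids $A\cup\{i\}$ and $A\cup\{j\}$ from both lying in $\mathcal G$ when $i,j\notin A$); what actually excludes $\{i,j\}\subset L$ here is the hypothesis $|L\cap Z|\le 2$ combined with $L\cap\{x,y\}\ne\emptyset$, which is precisely how the paper states it, and since your argument also invokes that hypothesis the rest of the proof goes through unchanged.
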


\begin{proof}
From $L \cap (i, j) \neq \emptyset$ and $L \cap (x,y) \neq \emptyset$, $|L \cap Z| \geq 2$ follows.
We have to show that $L \cap Z$ is neither $\{i, y\}$ nor $\{j, x\}$.
By symmetry assume $L \cap Z = \{i, y\}$.
By shiftedness on $W$, $\widetilde L = \left\{ i, y, w_1, \ldots, w_{\ell - 2}\right\} \in \mathcal G$.
However this contradicts $\mathcal G(\overline x) \cap \mathcal G(\overline y) = \emptyset$.
\end{proof}

\setcounter{corollary}{7}
\begin{corollary}
\label{cor:4.9}
$\mathcal G(\overline x, \overline j) = \emptyset = \mathcal G(\overline i, \overline y)$.
\end{corollary}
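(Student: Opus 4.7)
The plan is to derive both equalities as immediate consequences of Fact 4.8. The key observation is that the two options $\{i,x\}$ and $\{j,y\}$ that Fact 4.8 permits for $L \cap Z$ (when $|L \cap Z| \leq 2$) are exactly the intersections realized by the canonical witnesses $K$ and $H$; any member of $\mathcal G$ whose trace on $Z$ avoids either $x$ or $j$ (respectively $i$ or $y$) must fall into the hypothesis $|L \cap Z| \leq 2$ and then violate this dichotomy.

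Concretely, to prove $\mathcal G(\overline x, \overline j) = \emptyset$, I would suppose for contradiction that some $L \in \mathcal G$ satisfies $L \cap \{x, j\} = \emptyset$. Then $L \cap Z \subseteq Z \setminus \{x, j\} = \{i, y\}$, so in particular $|L \cap Z| \leq 2$ and Fact 4.8 applies, forcing $L \cap Z \in \{\{i,x\}, \{j,y\}\}$. But each of these contains an element of $\{x, j\}$, contradicting $L \cap Z \subseteq \{i, y\}$. The second equality $\mathcal G(\overline i, \overline y) = \emptyset$ follows by the symmetric argument obtained by interchanging $i \leftrightarrow j$ and $x \leftrightarrow y$ (the hypotheses of Case B, the choice of $K$ and $H$, and Fact 4.8 are all symmetric under this swap).

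There is essentially no obstacle: once Fact 4.8 has been established, Corollary 4.9 is a one-line book-keeping consequence. All the substantive work has already been carried out upstream, namely in producing the normalized witnesses $\widetilde K, \widetilde H$ with $\widetilde K \cap Z = \{i, x\}$ and $\widetilde H \cap Z = \{j, y\}$, and in proving Fact 4.8 via Lemma 4.7 together with the fact that every pair in $W = [n] \setminus Z$ is of type (a), so that $\mathcal G$ is shifted on $W$.
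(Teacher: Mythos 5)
Your proof is correct and follows essentially the same route as the paper: the paper likewise observes that any $G \in \mathcal G(\overline x, \overline j)$ has $G \cap Z \subseteq \{i,y\}$, then invokes Fact~\ref{fact:4.8} to rule this out, with the symmetric argument handling $\mathcal G(\overline i, \overline y)$.
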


\begin{proof}
If $G \in \mathcal G(\overline x, \overline j)$ then $|G \cap Z| \geq 2$ implies $G \cap Z = \{i, y\}$, a contradiction.
\end{proof}

\begin{corollary}
\label{cor:4.10}
$\{x, j\}$ and $\{i, y\}$ are of type {\rm (c)}.
\end{corollary}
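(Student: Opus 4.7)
The plan is to show directly that $S_{xj}(\mathcal G)$ (with the shift oriented so its target $p$ is the smaller of $x$ and $j$) is a star, which is precisely condition~(c) for $\{x,j\}$; the case $\{i,y\}$ will then follow by the symmetry that swaps $(i,x)\leftrightarrow(j,y)$ and invokes the other half of Corollary~\ref{cor:4.9}. Corollary~\ref{cor:4.9} already supplies condition~(ii) of Fact~\ref{fact:2.1}: every $G\in\mathcal G$ meets $\{x,j\}$. What remain are (A) condition~(i) of Fact~\ref{fact:2.1}, namely no $G\in\mathcal G$ contains both $x$ and $j$, and (B) for every $G\in\mathcal G$ containing $q=\max\{x,j\}$ but not $p=\min\{x,j\}$, the candidate image $G'=(G\setminus\{q\})\cup\{p\}$ lies outside $\mathcal G$, so that $S_{xj}$ actually moves $G$ into the star $\mathcal S_p$.

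For (A), suppose $\{x,j\}\subset G\in\mathcal G$. Since $\{x,j\}$ equals neither $\{i,x\}$ nor $\{j,y\}$, Fact~\ref{fact:4.8} forces $|G\cap Z|\geq 3$. The possibilities $G\cap Z=\{x,j,i\}$ and $G\supset Z$ both yield $\{i,j\}\subset G$, contradicting $\mathcal G(i)\cap\mathcal G(j)=\emptyset$ (which holds because $(i,j)$ is of type~(c)); the remaining case $G\cap Z=\{x,j,y\}$ yields $\{x,y\}\subset G$, contradicting $\mathcal G(x)\cap\mathcal G(y)=\emptyset$ (from Lemma~\ref{lem:4.7} applied to $(x,y)$). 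So (A) holds.

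For (B), let $G\in\mathcal G$ contain exactly one of $x,j$. If $j\in G$ and $x\notin G$, then Fact~\ref{fact:4.8} together with the exclusion of $\{i,j\}\subset G$ pins down $G\cap Z=\{j,y\}$, whence $G'\cap Z=\{x,y\}$; then $G'\in\mathcal G$ would violate $\mathcal G(x)\cap\mathcal G(y)=\emptyset$. Symmetrically, if $x\in G$ and $j\notin G$, then $G\cap Z=\{i,x\}$ and $G'\cap Z=\{i,j\}$, ruled out by $(i,j)$ being of type~(c). Hence $G'\notin\mathcal G$ in both sub-cases; combined with those $G\in\mathcal G$ already containing $p$ (which are fixed by $S_{xj}$), this yields $S_{xj}(\mathcal G)\subset\mathcal S_p$, so $\{x,j\}$ is of type~(c). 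The statement for $\{i,y\}$ follows verbatim after swapping $(i,x)\leftrightarrow(j,y)$.

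The main obstacle is property (B), not (A): knowing only that $\{x,j\}\in\mathcal T_2(\mathcal G)$ from Corollary~\ref{cor:4.9} does not by itself make $S_{xj}(\mathcal G)$ a star, because a $G\in\mathcal G(\overline x,j)$ whose shift image already lies in $\mathcal G(x,\overline j)$ would remain in place after the shift and violate the star property. The decisive structural input is that $Z$ has only four elements and the two already established type-(c) pairs $(i,j)$ and $(x,y)$ each forbid one of the two "diagonal" 2-patterns of $G\cap Z$, leaving a unique residual pattern whose image under $S_{xj}$ automatically falls into a forbidden configuration.
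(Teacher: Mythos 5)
Your plan is a genuinely different route from the paper's: you attempt to verify the star property of $S_{pq}(\mathcal G)$ (with $p=\min\{x,j\}$, $q=\max\{x,j\}$) directly from the definition, whereas the paper rules out types~(a) and~(b) for $\{x,j\}$ and invokes the shifted-ad-extremis trichotomy. Unfortunately your version contains a recurring invalid inference that creates a real gap.

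The problem is the repeated claim that $\{i,j\}\subset G$ contradicts $\mathcal G(i)\cap\mathcal G(j)=\emptyset$ (and the parallel claims for $\{x,y\}$). In the paper's notation $\mathcal G(i)=\{G\setminus\{i\}:i\in G\in\mathcal G\}$, so $\mathcal G(i)\cap\mathcal G(j)=\emptyset$ forbids the existence of an $(\ell-1)$-set $E$ disjoint from $\{i,j\}$ with \emph{both} $E\cup\{i\}$ and $E\cup\{j\}$ in $\mathcal G$. A single $\ell$-set $G$ containing both $i$ and $j$ yields no such $E$: $G\setminus\{i\}$ still contains~$j$ and $G\setminus\{j\}$ still contains~$i$, so neither lies in $\mathcal G(i)\cap\mathcal G(j)$. (This distinction is precisely the one exploited in the proof of Fact~\ref{fact:4.8}, where $K$ and $\widetilde L$ differ only in the exchange $x\leftrightarrow y$.) Consequently Fact~\ref{fact:2.1}(i) for $(i,j)$ and $(x,y)$ gives you no control over sets with $|G\cap Z|\geq 3$.

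This sinks both (A) and, more importantly, (B). In (B), sub-case $j\in G$, $x\notin G$, Fact~\ref{fact:4.8} only yields $G\cap Z\in\bigl\{\{j,y\},\{i,j,y\}\bigr\}$; your exclusion of $\{i,j,y\}$ rests on the false step above. For a $G$ with $G\cap Z=\{i,j,y\}$, the candidate image $G'=(G\setminus\{j\})\cup\{x\}$ has $G'\cap Z=\{i,x,y\}$, which is \emph{not} forbidden by Fact~\ref{fact:4.8} or by any fact established before Corollary~\ref{cor:4.10}, so you cannot conclude $G'\notin\mathcal G$. Even in the sub-case $G\cap Z=\{j,y\}$, the contradiction you cite ($\mathcal G(x)\cap\mathcal G(y)=\emptyset$) is invalid for the same reason; the correct contradiction there is simply Fact~\ref{fact:4.8} itself, since $|G'\cap Z|=2$ and $G'\cap Z=\{x,y\}\notin\bigl\{\{i,x\},\{j,y\}\bigr\}$. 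The paper sidesteps all of this: it notes that $\mathcal F(x,j)$ is full (via Corollary~\ref{cor:4.9} and saturation), which kills type~(b), and that type~(a) would force $\{x,y,w_1,\dots,w_{\ell-2}\}\in\mathcal G$ or $\{i,j,w_1,\dots,w_{\ell-2}\}\in\mathcal G$ (fixed images of $H$ or $K$), both flatly contradicting Fact~\ref{fact:4.8}; type~(c) remains by the trichotomy. That indirect argument never needs to enumerate $G\cap Z$ for general $G$, which is what your direct verification would require and cannot complete with the facts available at this point.
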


\begin{proof}
By Corollary \ref{cor:4.9} and the maximality of $|\mathcal F| + |\mathcal G|$, both $\mathcal F(x, j)$ and $\mathcal F(i, y)$ are full.
This implies that $\{x, j\}$ and $\{i, y\}$ could not be of type (b).
Should, say, $\{x, j\}$ be of type (a), we infer according to whether $x < j$ or $j < x$ that $\left\{x, y, w_1, \ldots, w_{\ell - 2}\right\} = \mathcal G$ or $\left\{i, j, w_1, \ldots, w_{\ell - 2}\right\} \in \mathcal G$.
Both contradict Fact \ref{fact:4.8}.
\end{proof}

\setcounter{fact}{9}
\begin{fact}
\label{fact:4.11}
$Z$ is a transversal of $\mathcal F$.
\end{fact}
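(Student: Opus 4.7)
The plan is to argue by contradiction: suppose some $F \in \mathcal{F}$ satisfies $F \cap Z = \emptyset$, so $F \subset W$. Because every pair contained in $W$ is of type~(a), the family $\mathcal{F}$ is invariant under the shifts $S_{w_r w_s}$ with $r < s$, and the usual iterative-shifting argument reduces us to the canonical case $F_0 := \{w_1, \ldots, w_k\} \in \mathcal{F}$.

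The contradiction will come from combining two structural facts. By Lemma~\ref{lem:4.7} the pair $\{x, y\}$ is of type~(c), so $\mathcal{G}(\overline{x}, \overline{y}) = \emptyset$, and by the maximality of $|\mathcal{F}| + |\mathcal{G}|$ the family $\mathcal{F}(x, y)$ is \emph{full}: every $k$-set containing $\{x, y\}$ lies in $\mathcal{F}$. Simultaneously, the Case~B hypothesis applied to the type-(c) pair $(i, j)$ asserts that $\mathcal{F}(\overline{i}, \overline{j})$ is a star; pick any common element $z$. Since $F_0 \in \mathcal{F}(\overline{i}, \overline{j})$, we must have $z \in F_0$, so $z = w_a$ for some $a \in \{1, \ldots, k\}$.

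To conclude, choose for this particular $a$ any $(k-2)$-subset $S_a \subset W \setminus \{w_a\}$; such $S_a$ exists because $|W \setminus \{w_a\}| = n - 5 \geq k - 2$ under the standing assumption $n \geq k + \ell + 2$. The set $F_a := \{x, y\} \cup S_a$ then lies in $\mathcal{F}$ by fullness and is disjoint from $\{i, j\}$, so $F_a \in \mathcal{F}(\overline{i}, \overline{j})$; yet $w_a \notin F_a$ directly contradicts $z = w_a \in F_a$. The main conceptual step---and what I expect to be the main obstacle in spotting the argument---is recognising that the two ``opposite'' type-(c) pairs $(i, j)$ and $(x, y)$ must be played off against each other: one supplies a star via Case~B, while the other populates that star with a cloud of sets $\{F_a\}_{a=1}^{k}$ whose common intersection is empty.
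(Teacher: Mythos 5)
Your proposal is correct and uses the same two ingredients as the paper: the fullness of $\mathcal F(x,y)$ (coming from $\{x,y\}$ being of type (c) and saturation) and the star property of $\mathcal F(\overline i, \overline j)$ (the Case B hypothesis). The paper's version is slightly more compressed — it first deduces $z(i,j) \in \{x,y\}$ directly from the fullness of $\mathcal F(x,y)$ and then observes that a set $F$ disjoint from $Z$ would lie in $\mathcal F(\overline i, \overline j)$ while missing $z(i,j)$ — but this is the same contradiction you produce. Your preliminary shift to $F_0 = \{w_1, \ldots, w_k\}$ is an unnecessary detour (any $F \subset W$ in $\mathcal F$ would directly give $z \in W$), but it is harmless.
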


\begin{proof}
Since $\mathcal F(x,y)$ is full $z(i, j) \in \{x, y\}$.

However, if $F \in \mathcal F$ satisfies $F \cap Z = \emptyset$ then $F \in \mathcal F(\overline i, \overline j)$ but $z(i, j) \notin F$, a contradiction.
\end{proof}

Now we are in a position to prove an important lemma.

\setcounter{lemma}{10}
\begin{lemma}
\label{lem:4.12}
Set $V = Z \cup \left(w_1, \ldots, w_{k + \ell - 4}\right)$, $|V| = k + \ell$.
Then for all $F \in \mathcal F$ and $G \in \mathcal G$,
$$
F \cap G \cap V \neq \emptyset.
$$
\end{lemma}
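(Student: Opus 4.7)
The plan is to argue by contradiction. Assume there exist $F\in\mathcal F$, $G\in\mathcal G$ with $F\cap G\cap V=\emptyset$ and set $T=F\cap G$. Cross-intersection forces $T\neq\emptyset$, while the assumption forces $T\cap Z=\emptyset$ and $T\cap\{w_1,\dots,w_{k+\ell-4}\}=\emptyset$, hence $T\subseteq\{w_{k+\ell-3},\dots,w_{n-4}\}$. I would first record two easy lower bounds: $|F\cap Z|\ge 1$ from Fact~\ref{fact:4.11} ($Z$ is a transversal of $\mathcal F$), and $|G\cap Z|\ge 2$ because Fact~\ref{fact:2.1}(ii) applied to the two type-(c) pairs $(i,j)$ (by hypothesis) and $\{x,y\}$ (by Lemma~\ref{lem:4.7}) gives $G\cap\{i,j\}\neq\emptyset\neq G\cap\{x,y\}$.

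The core step is a simultaneous shift exploiting that every pair inside $W$ is of type (a). Writing $T=\{w_{m_1},\dots,w_{m_{|T|}}\}$, for any $|T|$ pairwise distinct indices $a_1,\dots,a_{|T|}\in[k+\ell-4]$ with $w_{a_i}\notin G$, iterating the single-pair shifts $S_{w_{a_i}w_{m_i}}$ (each is legitimate because the $a_i$'s and $m_i$'s are pairwise distinct and each $(w_{a_i},w_{m_i})$-pair is of type (a)) produces
\[
G^{*}:=(G\setminus T)\cup\{w_{a_1},\dots,w_{a_{|T|}}\}\in\mathcal G.
\]
Enough admissible indices exist: $T\subseteq G\cap W$ together with $|G\cap W|\le\ell-|G\cap Z|\le\ell-2$ gives $|G\cap\{w_1,\dots,w_{k+\ell-4}\}|\le\ell-2-|T|$, so the number of $a\le k+\ell-4$ with $w_a\notin G$ is at least $k-2+|T|\ge|T|$ (valid for $k\ge 2$). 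Since $F\cap(G\setminus T)=\emptyset$, cross-intersection of $F$ with $G^{*}$ forces $F\cap\{w_{a_1},\dots,w_{a_{|T|}}\}\neq\emptyset$ for every such choice, which is equivalent to
\[
\bigl|\{a\le k+\ell-4:w_a\notin F\cup G\}\bigr|\le|T|-1.
\]

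The finish is a counting argument. Since $F$ and $G$ are disjoint on $\{w_1,\dots,w_{k+\ell-4}\}$ (as $F\cap G\cap V=\emptyset$), the previous bound upgrades to $|F\cap\{w_1,\dots,w_{k+\ell-4}\}|+|G\cap\{w_1,\dots,w_{k+\ell-4}\}|\ge k+\ell-3-|T|$. Adding the trivial $|F\cap\{w_{k+\ell-3},\dots\}|\ge|T|$ and its $G$-counterpart yields $|F\cap W|+|G\cap W|\ge k+\ell-3+|T|$. Rewriting the left-hand side as $k+\ell-|F\cap Z|-|G\cap Z|$ gives
\[
|F\cap Z|+|G\cap Z|+|T|\le 3,
\]
contradicting the first-paragraph bounds together with $|T|\ge 1$.

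I expect the main obstacle to be spotting and justifying the simultaneous shift in the middle step: a single-element shift produces no contradiction when $|T|=1$, while collectively shifting all of $T$ requires both a sufficient supply of free indices in $[k+\ell-4]$ and a careful verification that iterated pair-shifts actually land inside $\mathcal G$. The degenerate case $k=\ell=2$ needs a separate one-line argument: then $V=Z$, and $|G\cap Z|\ge 2=|G|$ forces $G\subseteq Z$, whence $\emptyset\neq F\cap G\subseteq Z\subseteq V$ contradicts the assumption directly.
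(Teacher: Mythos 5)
Your proof is correct, and it takes a genuinely different route from the paper's. The paper minimizes $|F\cap G|$ over all violating pairs, picks one common element $v\in F\cap G$ (which must lie outside $V$), finds a single free $w$ among $w_1,\dots,w_{k+\ell-4}$, and shifts only $F$ by $S_{wv}$ to obtain a violating pair with strictly smaller intersection --- a short contradiction-by-minimality that never needs to count anything. You instead fix any violating pair, shift \emph{all} of $T=F\cap G$ out of $G$ in one compound move, deduce from cross-intersection of $F$ with every resulting $G^{*}$ the bound $|\{a: w_a\notin F\cup G\}|\le|T|-1$, and convert this into the numerical contradiction $|F\cap Z|+|G\cap Z|+|T|\le 3$. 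The paper's extremal-choice argument is leaner; yours trades the minimization for an explicit double count and has the advantage of absorbing the boundary cases: when $\ell=2$ (including $k=\ell=2$) the bound $|G\cap W|\le\ell-2=0$ already clashes with $\emptyset\neq T\subseteq G\cap W$, so your separate one-liner for $k=\ell=2$ is in fact subsumed, whereas the paper disposes of $\ell=2$ only by pointing back to Fact~\ref{fact:4.8} in a remark after the proof. The two ingredients common to both proofs are the same: $Z$ is a transversal of $\mathcal F$ (Fact~\ref{fact:4.11}), $|G\cap Z|\ge 2$ via Fact~\ref{fact:2.1}(ii) applied to the two type-(c) pairs, and the fact that every pair inside $W$ is of type~(a) so that shifts within $W$ preserve membership.
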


\begin{proof}
Arguing for contradiction choose $F \in \mathcal F$, $G \in \mathcal F$ with $F \cap G \cap V = \emptyset$ and among such sets
$|F \cap G|$ is as small as possible.
Choose $v \in F \cap G$.
Then $F \cap Z \neq \emptyset$ implies $\left|F \cap \left(w_1, \ldots, w_{k + \ell - 4}\right)\right| \leq k - 2$ and by $|G \cap Z| \geq 2$, $\left|G \cap \left(w_1, \ldots, w_{k + \ell - 4}\right)\right| \leq \ell - 3$.

Consequently we can pick $w \in \left(w_1, \ldots, w_{k + \ell - 4}\right)$ with $w \notin F \cup G$.
By shiftedness $F' := \left(F \setminus \{z\}\right) \cup \{w\}$ is in $\mathcal F$.
However, $F' \cap G \cap V = \emptyset$ and $|F' \cap G| < |F \cap G|$, a contradiction.
\end{proof}

Seeing $\ell - 3$ in the proof the careful reader might wonder, what about the case $\ell = 2$~?
Actually, that case is readily settled by Fact \ref{fact:4.8}.
If $\ell = 2$ it implies $\mathcal G = \left\{\{i, x\}, \{j, y\}\right\}$ and \eqref{eq:4.1} follows together with uniqueness.

Let us use Lemma \ref{lem:4.12} to prove \eqref{eq:4.1} in an important special case.

\setcounter{proposition}{11}
\begin{proposition}
\label{prop:4.13}
Suppose that $\mathcal G(n) = \emptyset$ and $k > \ell$.
Then \eqref{eq:4.1} holds.
Moreover, inequality is strict unless $|\mathcal G| = 2$.
\end{proposition}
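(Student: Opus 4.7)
The plan is to split $\mathcal F$ according to whether a set contains the element $n$. Since $\mathcal G(n) = \emptyset$ gives $\mathcal G \subset {[n-1]\choose \ell}$, both pairs $\bigl(\mathcal F(\overline n), \mathcal G\bigr)$ and $\bigl(\mathcal F(n) - n, \mathcal G\bigr)$ are cross-intersecting on $[n-1]$, of uniformities $(k, \ell)$ and $(k-1, \ell)$ respectively, where $\mathcal F(n) - n := \{F \setminus \{n\} : n \in F \in \mathcal F\}$. Non-triviality of $\mathcal F$ forces $\mathcal F(\overline n) \neq \emptyset$ (else $\mathcal F$ would be a star with center $n$). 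If moreover $\mathcal F(n) = \emptyset$, then $\mathcal F \subset {[n-1]\choose k}$ and the inductive hypothesis for $(\mathcal F, \mathcal G)$ on $[n-1]$ together with the strict inequality $h(n-1, k, \ell) < h(n, k, \ell)$ (valid for $k > \ell \geq 2$) closes the case; henceforth I assume $\mathcal F(n) \neq \emptyset$ as well.

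The arithmetic driving the proof is the Pascal identity applied termwise to the three binomial summands in the definition of $h$, giving
$$
h(n-1, k, \ell) + h(n-1, k-1, \ell) = h(n, k, \ell) + 2.
$$
Applying Theorem~\ref{th:4.1} inductively to each pair on $[n-1]$---by induction on $n$ for $\bigl(\mathcal F(\overline n), \mathcal G\bigr)$ at dimensions $(k, \ell)$ (valid since the reduction in Case~A allows us to assume $n \geq k + \ell + 2$, hence $n - 1 > k + \ell$), and by induction on $k + \ell$ for $\bigl(\mathcal F(n) - n, \mathcal G\bigr)$ at dimensions $(k - 1, \ell)$ (valid since $k - 1 \geq \ell$ by $k > \ell$, so $(k-1) + \ell < k + \ell$)---and summing, I obtain $|\mathcal F| + 2|\mathcal G| \leq h(n, k, \ell) + 2$. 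Since $|\mathcal G| \geq 2$, this rearranges to $|\mathcal F| + |\mathcal G| \leq h(n, k, \ell)$, with equality forcing $|\mathcal G| = 2$ and equality in both inductive bounds, matching the extremal pair $(\mathcal F_0, \mathcal G_0)$.

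The main obstacle is verifying that both $\mathcal F(\overline n)$ and $\mathcal F(n) - n$ are non-trivial on $[n-1]$, as required by Theorem~\ref{th:4.1}. By the maximality of $|\mathcal F|$ these are the full families of $k$- and $(k-1)$-transversals of $\mathcal G$ in $[n-1]$. Suppose $\mathcal F(\overline n)$ were a star with center $c$; the non-triviality of $\mathcal F$ yields some $F' \in \mathcal F(n)$ with $c \notin F'$, so $F' \setminus \{n\}$ is a $(k-1)$-transversal of $\mathcal G$ avoiding $c$, and extending it by any $e \in [n-1] \setminus F' \setminus \{c\}$ (possible since $n > k + 1$) produces a $k$-transversal in $[n-1]$ avoiding $c$, which by saturation lies in $\mathcal F(\overline n)$---a contradiction. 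The analogous argument for $\mathcal F(n) - n$ is more delicate, since non-triviality of $\mathcal G$ does not directly furnish a $(k-1)$-transversal avoiding an arbitrary putative center $c'$; here I would invoke the Case~B structure---Fact~\ref{fact:4.11} that $Z = \{i, j, x, y\}$ is a transversal of $\mathcal F$, the explicit sets $K, H \in \mathcal G$ with $|K \cap H| = \ell - 2$ supplied by Lemma~\ref{lem:4.7}, the shifted-on-$W$ property, and Lemma~\ref{lem:4.12}---to construct the desired transversal explicitly and derive the contradiction.
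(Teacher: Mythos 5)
Your plan reproduces the paper's argument almost verbatim: split $\mathcal F$ at the element $n$ into $\mathcal F(\overline n)$ and the link $\mathcal F(n)$, observe that each pair with $\mathcal G$ is cross-intersecting and non-trivial on $[n-1]$, apply Theorem~\ref{th:4.1} inductively to both, and combine via the termwise Pascal identity $h(n-1,k,\ell)+h(n-1,k-1,\ell)=h(n,k,\ell)+2$ together with $|\mathcal G|\geq 2$ --- this is exactly the paper's computation. Your observation that cross-intersection of $\mathcal F(n)$ with $\mathcal G$ is immediate from $\mathcal G(n)=\emptyset$ (so $n\notin G$ for every $G\in\mathcal G$) is both correct and simpler than the paper's appeal to Lemma~\ref{lem:4.12}. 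The one step you leave as a promissory note --- non-triviality of $\mathcal F(n)$ --- is precisely the step the paper disposes of in its first sentence, and you already have the needed fact in hand: in Case~B both $\mathcal F(i,j)$ and $\mathcal F(x,y)$ are full, and $n\notin Z$ (since $n$ lies in no member of $\mathcal G$ while $i,x\in K$ and $j,y\in H$), so for any candidate center $c\in[n-1]$ one can pick a $k$-set through $\{i,j,n\}$ (if $c\notin\{i,j\}$) or through $\{x,y,n\}$ (if $c\in\{i,j\}$) that avoids $c$; such a set is in $\mathcal F$, hence $\cap\mathcal F(n)=\emptyset$. The same one-liner also covers $\mathcal F(\overline n)$, though your separate saturation argument for it is valid too. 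So there is no genuine gap --- you just need to write out that fullness sentence instead of deferring to ``the Case~B structure.''
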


\begin{proof}
Note that the fullness of $\mathcal F(i,j)$ and $\mathcal F(x,y)$ imply that $\mathcal F(n)$ is non-trivial.
In view of Lemma \ref{lem:4.12}, $\mathcal F(n)$ and $\mathcal G$ are CI.
The same is true for $\mathcal F(\overline n)$ and $\mathcal G$.
Thus
\begin{align*}
|\mathcal F(\overline n)| + |\mathcal G| - 2 &\leq {n - 1\choose k} - 2 {n - \ell - 1\choose k} + {n - 2\ell - 1\choose k},\\
|\mathcal F(n)| + |\mathcal G| - 2 &\leq {n - 1\choose k} - 2 {n - \ell - 1\choose k - 1} + {n - 2\ell - 1\choose k - 1}.
\end{align*}

Adding these yields
$|\mathcal F| + |\mathcal G| \leq {n \choose k} - 2 {n - \ell\choose k} + {n - 2\ell\choose k} + 2 - (|\mathcal G| - 2)$.
\end{proof}

As useful as Proposition \ref{prop:4.13} might look, it is not needed in the rest of the proof.
We believe that it might be useful in other situations.
On the other hand one can use Lemma \ref{lem:4.12} to settle the case $k = \ell$ as well.

\setcounter{claim}{12}

\begin{claim}
\label{cl:4.14}
If $\mathcal G(n) = \emptyset$ then $G \subset V$ for all $G \in \mathcal G$.
\end{claim}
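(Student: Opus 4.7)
The plan is to derive a contradiction from the assumption that some $G \in \mathcal G$ fails to lie in $V$, by combining Lemma~\ref{lem:4.12} with the saturation identity $\mathcal G = \mathcal T^{(\ell)}(\mathcal F)$ and applying the hypothesis $\mathcal G(n) = \emptyset$ twice. As a first step I would check that $n \notin V$. Since both $K = \{i, x, w_1, \ldots, w_{\ell - 2}\}$ and $H = \{j, y, w_1, \ldots, w_{\ell - 2}\}$ lie in $\mathcal G$ while $\mathcal G(n) = \emptyset$, neither contains $n$, so $n \notin Z \cup \{w_1, \ldots, w_{\ell - 2}\}$. Writing $n = w_{n - 4}$ (the largest element of $W$) and using $n \geq k + \ell + 2 = 2\ell + 2$ in the case $k = \ell$ of interest, we get $n - 4 > 2\ell - 4$, so $n \notin \{w_1, \ldots, w_{2\ell - 4}\}$ either. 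Hence $n \notin V$.

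Second, I would invoke saturation: by the maximality of $|\mathcal F| + |\mathcal G|$, any $\ell$-set that intersects every $F \in \mathcal F$ must already lie in $\mathcal G$, since adding it preserves the CI property and cannot destroy non-triviality. Thus $\mathcal G = \mathcal T^{(\ell)}(\mathcal F)$. Now suppose some $G \in \mathcal G$ has $G \not\subset V$, so $|G \cap V| \leq \ell - 1$. Lemma~\ref{lem:4.12} says $(G \cap V) \cap F \neq \emptyset$ for every $F \in \mathcal F$, i.e.\ $G \cap V$ is itself a transversal of $\mathcal F$. Pick $S \subset [n] \setminus (G \cap V)$ with $n \in S$ and $|S| = \ell - |G \cap V|$ (possible since $n \notin V \supset G \cap V$ and $|S| \geq 1$), and set $T := (G \cap V) \cup S$. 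As an $\ell$-superset of the transversal $G \cap V$, the set $T$ still meets every $F \in \mathcal F$, so $T \in \mathcal T^{(\ell)}(\mathcal F) = \mathcal G$; but $n \in T$ contradicts $\mathcal G(n) = \emptyset$.

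The only conceptual hurdle is recognizing that Lemma~\ref{lem:4.12} is stronger than it first appears: it produces the sub-$\ell$ transversal $G \cap V$ for free, after which saturation mechanically upgrades every $\ell$-superset of $G \cap V$ into $\mathcal G$, and the hypothesis $\mathcal G(n) = \emptyset$ supplies exactly the obstruction that rules out any padding by $n$. Everything else is bookkeeping about $Z$, $V$, and the location of $n$.
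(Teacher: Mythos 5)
Your proof is correct and follows essentially the same route as the paper: use Lemma~\ref{lem:4.12} to extract a sub-$\ell$-size transversal of $\mathcal F$ (the paper uses $G \setminus \{u\}$, you use $G \cap V$), pad it with $n$ to form an $\ell$-set, and invoke maximality/saturation to force that set into $\mathcal G$, contradicting $\mathcal G(n) = \emptyset$. The first paragraph establishing $n \notin V$ is unnecessary overhead, since $\mathcal G(n) = \emptyset$ already gives $n \notin G$, hence $n \notin G \cap V$ directly.
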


\begin{proof}
Suppose that $u \in G \setminus V$ for some $G \in \mathcal G$.
Then Lemma \ref{lem:4.12} implies $G \setminus \{u\} \cap F \neq \emptyset$ for all $F \in \mathcal F$.
Consequently the same holds for $(G \setminus \{u\}) \cup \{n\} =: G'$.
Now the maximality of $|\mathcal G| + |\mathcal F|$ implies $G' \in \mathcal G$.
Hence $\mathcal G(n) \neq \emptyset$.
\end{proof}

If $k = \ell$ and $\mathcal G(n) = 0$ then Claim \ref{cl:4.14} and Lemma \ref{lem:4.4} imply \eqref{eq:4.1} together with uniqueness.

Recall that for $a \in \{i, x\}$, $b \in \{j, y\}$, $(a, b)$ is of type (c) and $\mathcal F(\overline a, \overline b)$ is trivial.
Let $z = z(a, b)\in F$ for all $F \in \mathcal F(\overline a, \overline b)$.
This implies that $\{a, b, z\} \cap F \neq \emptyset$ for all $F \in \mathcal F$.
That is, for each of the four choices of $(a, b)$ there is a $T$, $(a, b) \subset T \in {Z \choose 3}$ where $T$ is a transversal of~$\mathcal F$.
Using this for $(a, b) = (i, j)$ and $\{x, y\}$ we infer that at least two of the four sets in ${Z\choose 3}$ are transversals of $\mathcal F$.
Could it be three or four?

\setcounter{proposition}{13}
\begin{proposition}
\label{prop:4.15}
Exactly two members, $T, T' \in {Z\choose 3}$ are transversals of $\mathcal F$.
Moreover, either $\{T, T'\} = \bigl\{Z \setminus \{x\}, Z\setminus \{i\}\bigr\}$ or $\{T, T'\} = \bigl\{Z \setminus \{y\}, Z \setminus \{j\}\bigr\}$.
\end{proposition}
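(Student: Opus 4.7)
My plan is to recast the question in terms of which singletons appear as $F\cap Z$. Since Fact~\ref{fact:4.11} gives $F\cap Z\neq\emptyset$ for every $F\in\mathcal F$, the three-element set $Z\setminus\{a\}$ is a transversal of $\mathcal F$ precisely when no $F\in\mathcal F$ satisfies $F\cap Z=\{a\}$. Let $D\subseteq Z$ collect these absent singletons; the proposition becomes $|D|=2$ and $D\in\bigl\{\{i,x\},\{j,y\}\bigr\}$.

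The first step establishes $|D|\geq 2$ and the two possible $2$-element candidates. For each type-(c) pair $(a,b)\in\{i,x\}\times\{j,y\}$ the family $\mathcal F(\overline a,\overline b)$ is a star by the Case~B hypothesis, and its center lies in $Z\setminus\{a,b\}$ by the same argument that gave $z(i,j)\in\{x,y\}$ in Fact~\ref{fact:4.11} (using fullness of $\mathcal F$ on the antipodal pair). Non-emptiness of the star is ensured, because $\mathcal F(\overline a,\overline b)=\emptyset$ would make $\mathcal G(a,b)$ full and thus produce $G\in\mathcal G$ with $G\cap Z=\{a,b\}$, violating Fact~\ref{fact:4.8}. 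If one element of $Z\setminus\{a,b\}$ belongs to the center, then the other fails to occur as $F\cap Z$ and hence lies in $D$. Iterating over the four pairs, $D$ must be a vertex cover of the $4$-cycle on $Z$ with edges $\{x,y\},\{i,j\},\{x,j\},\{i,y\}$, and the only $2$-element vertex covers of this cycle are $\{i,x\}$ and $\{j,y\}$.

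The second step, and the crux of the proof, is to rule out $|D|\geq 3$. Using the symmetries $i\leftrightarrow x$, $j\leftrightarrow y$ and $\{i,x\}\leftrightarrow\{j,y\}$ I reduce to the case $D\supseteq\{x,y,i\}$, so that $\{j\}$ is the only singleton that may occur. The four star centers are then pinned down, yielding in particular $\mathcal F(\overline i,\overline j)=\left\{F\in{[n]\choose k}:F\cap Z=\{x,y\}\right\}$ and an analogous rigid identity for $\mathcal F(\overline x,\overline j)$. I then exhibit an $F\in\mathcal F$ with $F\cap Z=\{i\}$ via saturation: take $F=\{i\}\cup S$ with $S\subseteq W$, $|S|=k-1$ and $\{w_1,\ldots,w_{\ell-2}\}\subseteq S$. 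By Fact~\ref{fact:4.8} every $G\in\mathcal G$ has $G\cap Z\in\bigl\{\{i,x\},\{j,y\}\bigr\}$ or $|G\cap Z|\geq 3$; $F$ meets $G$ via $i$ in the first case and via an element of $G\cap Z$ in the third. For the remaining case $G\cap Z=\{j,y\}$, shiftedness of $\mathcal G$ on $W$ together with the minimality of $H$ forces $G$ to contain $\{w_1,\ldots,w_{\ell-2}\}$, which is a subset of $S$; hence $F\cap G\neq\emptyset$. Saturation then places $F$ in $\mathcal F$, contradicting $i\in D$. The case $|D|=4$ is excluded by the identical construction.

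The main obstacle is precisely the last cross-intersection verification: showing that a single $S\subseteq W$ meets every $G\in\mathcal G$ with $G\cap Z=\{j,y\}$ simultaneously. This is not automatic; it requires a careful use of shiftedness of $\mathcal G$ within $W$, the specific form of the ``smallest'' such $G$ (namely $H$), and the extremality of $|\mathcal F|+|\mathcal G|$. Carrying out this verification cleanly is where the bulk of the argument lies, and once completed the proposition follows from the structural information collected in the first step.
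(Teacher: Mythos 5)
Your Step 1 is a correct (and nicely packaged) reformulation: $Z\setminus\{a\}$ is a $3$-transversal of $\mathcal F$ iff $a\in D$, and since for each type-(c) pair $(a,b)$ the non-trivial star $\mathcal F(\overline a,\overline b)$ has its centre in $Z\setminus\{a,b\}$, the set $D$ is a vertex cover of the $4$-cycle $i$--$j$--$x$--$y$--$i$, whose $2$-element covers are exactly $\{i,x\}$ and $\{j,y\}$. This matches the content of the paper's first paragraph of the proof.

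Step 2 however does not work, and this is a genuine flaw, not merely a verification you deferred. Suppose $i\in D$, i.e.\ $T:=Z\setminus\{i\}$ is a $3$-transversal of $\mathcal F$. Then saturation (Claim~\ref{cl:2.2}) forces \emph{every} $\ell$-set containing $T$ into $\mathcal G$; in particular $\mathcal G$ contains $\ell$-sets of the form $T\cup A$ for arbitrary $A\in\binom{W}{\ell-3}$, none of which contains $i$. A set $F=\{i\}\cup S$ with $S\subset W$, $|S|=k-1$ cannot possibly meet all of these (for $\ell=3$ the set $T$ itself already lies in $\mathcal G$ and is disjoint from $F$; for $\ell\geq4$ take $A$ in the tail of $W$ disjoint from $S$). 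So $F$ is never admitted by saturation, and the contradiction you want never materialises. Your intermediate assertion that ``shiftedness of $\mathcal G$ on $W$ together with the minimality of $H$ forces $G$ to contain $\{w_1,\ldots,w_{\ell-2}\}$'' is also false: shiftedness on $W$ is a downward-closure property and puts no upper constraint whatsoever on which sets may lie in $\mathcal G(\{j,y\},Z)$; the maximality of $|K\cap H|$ does not help either, since a $G$ with $G\cap Z=\{j,y\}$ and $G\setminus Z=\{w_2,\ldots,w_{\ell-1}\}$ has $|K\cap G|<\ell-2$ and so does not violate it.

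The paper rules out $|D|\geq3$ by working inside $\mathcal G$ rather than trying to enlarge $\mathcal F$: if $a,b\in D$ with $\{a,b\}$ a type-(c) edge, then saturation gives $G_a:=(Z\setminus\{a\})\cup\{w_1,\dots,w_{\ell-3}\}\in\mathcal G$ and $G_b:=(Z\setminus\{b\})\cup\{w_1,\dots,w_{\ell-3}\}\in\mathcal G$, and the single $(\ell-1)$-set $G_a\setminus\{b\}=G_b\setminus\{a\}$ lies in $\mathcal G(a)\cap\mathcal G(b)$, contradicting Fact~\ref{fact:2.1}(i). Since any $3$-element subset of $Z$ contains one of the edges $\{i,j\}$ or $\{x,y\}$, this immediately gives $|D|\leq2$ and no saturation-of-$\mathcal F$ argument is needed. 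You should replace your Step 2 by this argument.
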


\begin{proof}
If $T \in {Z \choose 3}$ is a transversal of $\mathcal F$ then saturatedness implies that all $\ell$-sets containing $T$ are in $\mathcal G$.
In particular, $G_T := T \cup \{w_1, \ldots, w_{\ell - 3}\}$.
For a transversal $T \in {Z \choose 3}$ define $z(T)$ by $T = Z\setminus \{z(T)\}$.
If for two transversals $T, T'$, $\{z(T), z(T')\} = \{i, j\}$ or $\{x, y\}$ then $G(T)$ and $G(T')$ contradict $\mathcal G(i) \cap \mathcal G(j) \neq \emptyset$ or $\mathcal G(x) \cap \mathcal G(y) \neq \emptyset$, respectively.
This implies that we could not have three or more such $T$.
Finally, if $\{z(T), z(T')\} = \{i, y\}$ or $\{j, x\}$ then there is no transversal of size $3$ containing $\{i, y\}$ or $\{j, x\}$, respectively.
This concludes the proof.
\end{proof}

Let us suppose that $T = \{i, x, j\}$ and $T' = \{i, x, y\}$.

\setcounter{claim}{14}
\begin{claim}
\label{cl:4.16}
There exist $F, F' \in \mathcal F$ such that $F \cap Z = \{i\}$ and $F' \cap Z = \{x\}$.
\end{claim}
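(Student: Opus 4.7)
The plan is to deduce Claim 4.16 almost immediately from Proposition 4.15 together with Fact 4.11. The assumption singles out $T=\{i,x,j\}=Z\setminus\{y\}$ and $T'=\{i,x,y\}=Z\setminus\{j\}$ as the only two $3$-subsets of $Z$ that are transversals of $\mathcal F$. Consequently the remaining two $3$-subsets of $Z$, namely $Z\setminus\{i\}=\{j,x,y\}$ and $Z\setminus\{x\}=\{i,j,y\}$, are \emph{not} transversals of $\mathcal F$.

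The first step is then purely a matter of unpacking definitions: from "$\{j,x,y\}$ is not a transversal of $\mathcal F$" one obtains an $F\in\mathcal F$ with $F\cap\{j,x,y\}=\emptyset$, and from "$\{i,j,y\}$ is not a transversal of $\mathcal F$" one obtains an $F'\in\mathcal F$ with $F'\cap\{i,j,y\}=\emptyset$. In both cases the condition restricts $F\cap Z$ (resp.\ $F'\cap Z$) to a subset of a single element of $Z$.

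The second step is to invoke Fact 4.11, which asserts that $Z$ itself is a transversal of $\mathcal F$. This forces $F\cap Z\neq\emptyset$ and $F'\cap Z\neq\emptyset$, and combined with the containments from the previous step one concludes $F\cap Z=\{i\}$ and $F'\cap Z=\{x\}$ (after relabelling if necessary to match the statement). This is the witness the claim requires.

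I do not foresee a genuine obstacle here: the statement is essentially the contrapositive of "every $3$-subset of $Z$ other than $T,T'$ is a transversal", together with the global transversality of $Z$. The only subtlety is to verify that Proposition 4.15 has indeed been applied in the correct case (the specific pair $\{T,T'\}=\{Z\setminus\{y\},Z\setminus\{j\}\}$ under the convention $T=\{i,x,j\}$, $T'=\{i,x,y\}$), so that the two non-transversals are precisely $Z\setminus\{i\}$ and $Z\setminus\{x\}$ and the claimed intersections with $Z$ come out as $\{i\}$ and $\{x\}$ rather than, say, $\{j\}$ and $\{y\}$.
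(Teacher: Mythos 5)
Your argument is correct and is essentially identical to the paper's: use Proposition \ref{prop:4.15} to identify $\{j,x,y\}$ and $\{i,j,y\}$ as non-transversals, extract witnesses $F,F'$ disjoint from them, and invoke Fact \ref{fact:4.11} to pin down $F\cap Z=\{i\}$ and $F'\cap Z=\{x\}$.
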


\begin{proof}
By Proposition \ref{prop:4.15}, there is some $F \in \mathcal F$ satisfying $F \cap \{x, j, y\} = \emptyset$.
By Fact \ref{fact:4.11}, $F \cap Z = \{i\}$.
Considering the non-transversal $\{i, j, y\}$ we obtain $F' \in \mathcal F$ with $F' \cap Z = \{x\}$.
\end{proof}

Since $T \cap H = \{j\}$, $\mathcal G(j)$ is non-trivial.
Let us show:

\setcounter{claim}{15}
\begin{claim}
\label{cl:4.17}
$\mathcal F(\overline j)$ is non-trivial.
\end{claim}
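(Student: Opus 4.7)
The plan is a contradiction argument: assume $\mathcal F(\overline j)$ is a star with common element $z$, and show that no value of $z$ is admissible. First I invoke Claim 4.16 to produce $F, F' \in \mathcal F$ with $F \cap Z = \{i\}$ and $F' \cap Z = \{x\}$. Both belong to $\mathcal F(\overline j)$ because neither contains $j$, so $z$ must lie in $F \cap F'$. Since $F \cap F' \cap Z \subset \{i\} \cap \{x\} = \emptyset$, this rules out $z \in Z$, forcing $z \in W = [n] \setminus Z$.

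Second, I exhibit a member of $\mathcal F(\overline j)$ that misses $z$. By Corollary 4.10 the pair $(i, y)$ is of type (c), so by the maximality of $|\mathcal F| + |\mathcal G|$ the family $\mathcal F(i, y)$ is full: every $k$-set containing $\{i, y\}$ belongs to $\mathcal F$. Since $n \geq k + \ell + 2$, the set $[n] \setminus \{i, y, j, z\}$ still has $n - 4 \geq k - 2$ elements, so one may choose a $k$-set $F^*$ with $\{i, y\} \subset F^* \subset [n] \setminus \{j, z\}$. Fullness yields $F^* \in \mathcal F$; since $j \notin F^*$, $F^* \in \mathcal F(\overline j)$; yet $z \notin F^*$ by construction, contradicting that $z$ lies in every member of $\mathcal F(\overline j)$.

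The only delicate ingredient is the first step, pinning $z$ down to $W$, and it rests essentially on the structural information packaged in Proposition 4.15 and Claim 4.16 (namely, having two concrete sets of $\mathcal F$ whose traces on $\{i, x\}$ are disjoint). Once $z \in W$ is established, the fullness of $\mathcal F(i, y)$ combined with the standing hypothesis $n \geq k + \ell + 2$ makes the construction of a $z$-avoiding member of $\mathcal F(\overline j)$ essentially automatic.
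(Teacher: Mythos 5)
Your proof is correct and uses exactly the same two ingredients as the paper's --- the sets $F, F'$ from Claim~\ref{cl:4.16} and the fullness of $\mathcal F(i,y)$ coming from Corollary~\ref{cor:4.10} --- merely deployed in the opposite order (you first pin the putative common element into $W$ via $F, F'$, then kill it with fullness; the paper first uses fullness to restrict the candidates and then rules them out with $F, F'$). In fact your choice of the full pair is the right one: the paper's printed ``$\mathcal F(i,x)$ is full'' appears to be a slip, since $H=\{j,y,w_1,\dots,w_{\ell-2}\}\in\mathcal G$ meets neither $i$ nor $x$, whereas $\mathcal F(i,y)$ (or $\mathcal F(x,y)$) genuinely is full by Corollaries~\ref{cor:4.9}--\ref{cor:4.10}.
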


\begin{proof}
Since $\mathcal F(i, x)$ is full, the only candidates for membership in $\cap \mathcal F(\overline j)$ are $i$ and $x$.
However, $F, F'$ from Claim \ref{cl:4.16} are in $\mathcal F(\overline j)$ and $F \cap F' \cap Z = \emptyset$.
\end{proof}

Applying the induction hypothesis to $\mathcal F(\overline j)$ and $\mathcal G(j)$ yields
\beq
\label{eq:4.13}
|\mathcal F(\overline j)| + |\mathcal G(j)| \leq {n - 1\choose k} - 2 {n - \ell \choose k} + {n - 2\ell + 1\choose k} + 2.
\eeq
Here we distinguish two cases, namely $k > \ell$ and $k = \ell$.
If $k > \ell$, i.e., $k - 1 \geq \ell$ then we apply \eqref{eq:1.7} to the non-empty CI pair $\mathcal F(j)$ and $\mathcal G(\overline j)$:
\beq
\label{eq:4.14}
|\mathcal F(j)| + |\mathcal G(\overline j)| \leq {n - 1\choose k - 1} - {n - \ell - 1\choose k - 1} + 1.
\eeq

The sum of \eqref{eq:4.13} and \eqref{eq:4.14} is:
$$
|\mathcal F| + |\mathcal G| \leq {n\choose k} - 2{n - \ell \choose k} + {n - 2\ell\choose k} + 2 - \left(\!{n \!-\! \ell \! -\! 1\choose k - 1} - {n - 2\ell\choose k - 1} \! -\! 1\!\right).
$$
That is, to prove \eqref{eq:4.1} we need to show,
$$
{n - \ell - 1\choose k - 1} - {n - 2\ell \choose k - 1} = {n - \ell - 2\choose k - 2} + {n - \ell - 3\choose k - 2} + \ldots {n - 2\ell \choose k - 2} > 1.
$$
This is true for $n \geq k + \ell + 1$, $k \geq 3$ by ${n - \ell - 2\choose k - 2} \geq {k - 1\choose k - 2} = k - 1 > 1$.

The second case is $k = \ell$.

Since $\{i, x, y\}$ is a transversal of $\mathcal F$,
$\mathcal F(j)| = |\mathcal F_1| + |\mathcal F_2|$ where
$$
\mathcal F_1 = \bigl\{F \in \mathcal F, j \in F, F \cap \{i, x\} \neq \emptyset\bigr\} \ \ \text{ and } \ \ \mathcal F_2 = \mathcal F(\{y, j\}, Z).
$$
For $G \in \mathcal G(\overline j)$, Fact \ref{fact:4.8} implies $\{i, x\} \subset G$.
Thus
$$
|\mathcal G(\overline j)| = \bigl|\mathcal G(\{i, x, y\}, Z)\bigr| + \bigl|\mathcal G(\{x, y\}, Z)\bigr|.
$$
Obviously,
$$
\gathered
|\mathcal F_1| \leq {n - 1\choose k - 1} - {n - 3\choose k - 1},\\
\bigl|\mathcal G(\{i, x, y\}, Z)\bigr| \leq {n - 4\choose k - 3}.
\endgathered
$$
The important observation is that $\mathcal F(\{y, j\}, Z)$ and $\mathcal G(\{i, x\}, Z)$ are cross-intersecting $(k - 2)$-graphs on $[n] \setminus Z$.
By \eqref{eq:2.2}, $\bigl|\mathcal F(\{y, j\}, Z)\bigr| + \bigl|\mathcal G(\{i, x\}, Z)\bigr| \leq {n - 4\choose k - 2}$.
Thus we infer
\beq
\label{eq:4.15}
|\mathcal F(j)| + |\mathcal G(\overline j)| \leq {n - 1\choose k - 1} - {n - 3\choose k - 1} + {n - 3\choose k - 2}.
\eeq
Adding this to \eqref{eq:4.13} yields
$$
|\mathcal F| + |\mathcal G| \leq \! {n\choose k} - 2{n\! -\! k\choose k} + {n \!-\! 2k\choose k} + 2 - {n\! -\! 2k \choose k - 1} - \left(\!{n \! -\! 3\choose k\! -\! 1} - {n\! -\! 3\choose k\! -\! 2}\!\right),
$$
proving \eqref{eq:4.1} with strict inequality for $n > 2k$.
\hfill $\square$

\section{The new proof of Theorem \ref{th:1.7}}
\label{sec:5}

Since simultaneous shifting maintains cross-intersection we may assume that $\mathcal F$ and $\mathcal G$ are initial families.
For $G \in \mathcal G$ define the quantity $p(G)$ as the maximal integer $p$ with the property
\beq
\label{eq:5.1}
\bigl|G \cap [2p + k - \ell]\bigr| \geq p.
\eeq

Note that \eqref{eq:5.1} is always satisfied for $p = 0$.
This implies $0 \leq p \leq \ell$.

In the case $k = \ell$, should $p(G) = 0$ hold for some $G$, then $(3, 5, 7, \ldots, 2k + 1) \prec G$ follows.
Indeed, otherwise if $G = (x_1, \ldots, x_k)$ then for some $1 \leq p \leq k$, $x_p \leq 2_p$ and thereby $|G \cap [2p]| \geq p$ would hold.

Thus $p(G) = 0$ and shiftedness imply $(3, 5, \ldots, 2k + 1) \in \mathcal G$.
We claim that $p(F) > 0$ for all $F \in \mathcal F$.
In the opposite case we infer that $(3, 5, \ldots, 2k + 1) \in \mathcal F$.
However, $\mathcal F$ is initial, yielding $(2, 4, \ldots, 2k) \in \mathcal F$ which contradicts cross-intersection.

In the $k = \ell$ case, if necessary we interchange $\mathcal F$ and $\mathcal G$.
Then we can suppose that $p(G) > 0$ for all $G \in \mathcal G$.

Let us now define the map $\varphi : \mathcal G \to {[n]\choose k}$ by
$\varphi(G) = G \Delta [2p(G) + k - \ell]$ \ ($\Delta$ denotes symmetric difference).

\begin{lemma}
\label{lem:5.1}
{\rm (i)} \ \ \ $|\varphi(G)| = k$,\\
\hspace*{40mm} {\rm (ii)} \ \ $\varphi$ is an injection with $\varphi(G) \notin \mathcal F$,\\
\hspace*{40mm} {\rm (iii)} \  $\varphi(G) \cap [\ell] \neq \emptyset$ for $G \neq [\ell]$.
\end{lemma}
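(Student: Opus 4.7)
The plan is to leverage the maximality in the definition of $p(G)$ together with the shifted property of $\mathcal{G}$. Write $I_p := [2p+k-\ell]$ throughout, so that $\varphi(G) = G \Delta I_{p(G)}$. For (i), maximality of $p := p(G)$ forces $|G \cap I_p| = p$ \emph{exactly}: any extra element would give $|G \cap I_{p+1}| \geq |G \cap I_p| \geq p+1$, contradicting $p(G) = p$. Consequently $|\varphi(G)| = |G| + |I_p| - 2|G \cap I_p| = \ell + (2p+k-\ell) - 2p = k$.

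For injectivity in (ii), suppose $\varphi(G_1) = \varphi(G_2) =: F$ with $p_i := p(G_i)$ and, WLOG, $p_1 \leq p_2$. Taking symmetric differences yields $G_1 = G_2 \Delta J$ with $J := I_{p_2} \setminus I_{p_1}$. A cardinality count using $|G_1| = |G_2| = \ell$ gives $|G_2 \cap J| = p_2 - p_1$, and together with $|G_2 \cap I_{p_2}| = p_2$ this yields $|G_2 \cap I_{p_1}| = p_1$. Then $G_1 \cap I_{p_1} = G_2 \cap I_{p_1}$ and $G_1 \cap J = J \setminus G_2$ have sizes $p_1$ and $p_2 - p_1$, so $|G_1 \cap I_{p_2}| = p_2$, which forces $p(G_1) \geq p_2$. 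This contradicts $p_1 < p_2$, so $p_1 = p_2$ and $G_1 = G_2$.

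For $\varphi(G) \notin \mathcal{F}$, I would exhibit a set $G' \in \mathcal{G}$ disjoint from $F := \varphi(G)$ and then invoke cross-intersection. Writing $G = \{x_1 < \cdots < x_\ell\}$, so that $x_1,\dots,x_p$ are exactly the elements of $G \cap I_p$, set $G' := \{x_1, \dots, x_p\} \cup \{y_1, \dots, y_{\ell-p}\}$, where $y_1 < \cdots < y_{\ell - p}$ enumerate the smallest elements of $[2p+k-\ell+1,\, n] \setminus G$. The maximality of $p$ yields $|G \cap [2p+k-\ell+1,\, 2(p+i)+k-\ell]| \leq i - 1$ for every $i \geq 1$, which forces $y_i \leq x_{p+i}$; hence $G' \prec G$, so $G' \in \mathcal{G}$ by the shifted property. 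By construction $G' \cap F = \emptyset$, and cross-intersection rules out $F \in \mathcal{F}$.

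For (iii), one writes $\varphi(G) \cap [\ell]$ explicitly as $\bigl(([\ell] \cap I_p) \setminus G\bigr) \cup \bigl((G \cap [\ell]) \setminus I_p\bigr)$. Requiring both terms to vanish forces $G \cap [\ell] = [\ell] \cap I_p$; in the case $[\ell] \subset I_p$ this gives $G = [\ell]$, and the only other possibility pins down $k = \ell$ together with $p(G) = 0$ and $G \cap [\ell] = \emptyset$, which is excluded by the convention recorded just before the lemma (after swapping $\mathcal{F}$ and $\mathcal{G}$ if necessary, $p(G) > 0$ for every $G \in \mathcal{G}$ in the $k = \ell$ case). The main obstacle is the $\varphi(G) \notin \mathcal{F}$ half of (ii): constructing $G'$ and checking $G' \prec G$ is where the maximality of $p$ is really used, and without the shifted hypothesis on $\mathcal{G}$ there would be no way to conclude $G' \in \mathcal{G}$.
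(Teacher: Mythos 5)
Your proof is correct and takes essentially the same approach as the paper: you exploit maximality of $p(G)$ to force $|G \cap [2p+k-\ell]| = p$ exactly, and for part (ii) you push $G$ down via shiftedness to a member of $\mathcal G$ disjoint from $\varphi(G)$. One small improvement worth noting: by taking the $y_i$ to be the smallest elements of $[2p+k-\ell+1,n]\setminus G$ (rather than the fixed even positions $k-\ell+2(p+1),\dots,k-\ell+2\ell$ as in the paper), your $G'$ is automatically disjoint from $\varphi(G)$, so you avoid the paper's second shifting step that pushes $\varphi(G)$ down within $\mathcal F$.
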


\begin{proof}
The maximal choice of $p$ in \eqref{eq:5.1} implies $\bigl|G \cap [2p(G) + k - \ell]\bigr| = p(G)$.
Thus $\bigl|G \Delta [2p(G) + k - \ell]\bigr| = |G| + k - \ell = k$, proving (i).

Let us show $\varphi(G) \neq \varphi(G')$ for $G \neq G' \in \mathcal G$.
If $p(G) = p(G')$ then this is evident.
Suppose that $p(G) > p(G')$.
Then $\varphi(G) \cap [2p(G) + k - \ell] = p(G) + k - \ell$.
However $\bigl|G' \cap 2p(G) + (k - \ell)\bigr| < p(G)$ implies $\varphi(G') \cap [2p(G) + k - \ell] =
p(G') + k - \ell + \bigl|G' \cap [2p(G') + k - \ell + 1, 2p(G) + k - \ell]\bigr| < p(G) + k - \ell$.
This shows that $\varphi$ is injective.

To prove $\varphi(G) \notin \mathcal F$ first note that the maximal choice of $p = p(G)$ implies that for $G = (x_1, \ldots, x_\ell)$,
$x_{p + i} > 2p + k - \ell + 2i$, $p < i \leq \ell$.
Using shiftedness for $\mathcal G$ we infer $\left(G \cap [2p + k - \ell] \cup \{k - \ell + 2(p + 1), k - \ell + 2(p + 2), \ldots, k - \ell + 2\ell\}\right) \in \mathcal G$.
If $\varphi(G) \in \mathcal F$ then the shiftedness of $\mathcal F$ implies in the same way
$$
\bigl([2p + k - \ell] \setminus G \cup \{k - \ell + 2p + 1, k - \ell + 2p + 3, \ldots, k - \ell + 2\ell - 1\}\bigr) \in \mathcal F.
$$
However these two sets are disjoint, a contradiction.

To prove (iii) is easy.
If $G \neq [\ell]$ then let $x$ be the minimal element of $[\ell] \setminus G$.
Now $[x - 1] \subset G$ implies $p(G) \geq x - 1$ and $2p(G) + k - \ell \geq x$ because $p(G) + k - \ell \geq 1$ either by $k > \ell$ or by $k = \ell$ and $p(G) \geq 1$.
Hence $x \in \varphi(G)$.
\end{proof}

Let us deduce Theorem \ref{th:1.7} from the lemma.
By shiftedness $[\ell] \in \mathcal G$.
Set $\mathcal H = \left\{H \in {[n]\choose k} : H \cap [\ell] \neq \emptyset \right\}$.
By cross-intersection $\mathcal F \subset \mathcal H$.
By the lemma $\varphi(\mathcal G \setminus \{[\ell]\}) \cap \mathcal F = \emptyset$ and $\varphi(\mathcal G) \subset \mathcal H$ as well.
Consequently
\beq
\label{eq:5.2}
|\mathcal F| + |\mathcal G| - 1 \leq |\mathcal H| = {n \choose k} - {n - \ell\choose k} \ \text{ proving \eqref{eq:1.7}}.
\eeq

Let us show that the inequality is strict if $|\mathcal G| > 1$ unless $k = \ell = 2$.
By shiftedness $[\ell + 1] \setminus \{\ell\} =: G_\ell \in \mathcal G$.
Obviously, $p(G_\ell) = \ell$.
Thus $\varphi(G_\ell) = (\ell, \ell + 2, \ell + 3, \ldots, k + \ell) =: H_0 \notin \mathcal F$ by Lemma \ref{lem:5.1} (ii).

\begin{proof}
Define $H_1 = (\ell, \ell + 2) \cup [\ell + 4, k + \ell + 1]$.
Note that $k \geq 3$ implies $[\ell + 4, k + \ell + 1] \neq \emptyset$.
Since $H_0 \prec H_1$, $H_1 \notin \mathcal F$.
Should equality hold in \eqref{eq:1.7}, that is, in \eqref{eq:5.2}, there is some $G_1 \in \mathcal G$ with $\varphi(G_1) = H_1$.

Now $H_1 = G_1 \Delta\bigl[2\ell(G_1) + k - \ell\bigr]$ implies $H_1 \Delta \bigl[2\ell(G_1) + k - \ell\bigr] = G_1$.

Using $\ell(G_1) + k - \ell > 0$, $1 \in G_1$ follows.
This implies $\ell(G_1) \geq 1$ and $2\ell(G_1) + k - \ell \geq 2$.
Using also $[\ell - 1] \cap H_1 = \emptyset$ we can prove successively $\ell(\mathcal G_1) > \ell - 1$ and therefore $\ell(\mathcal G_1) = \ell$.
However $H_1 \Delta [k + \ell] = [\ell - 1] \cup \{\ell + 1, \ell + 3\}$ is an $(\ell + 1)$-set contradicting $H_1 = \varphi(G_1)$.
This concludes the proof.
\end{proof}

We should mention that in the case $k = \ell = 2$, $H_1 = (2,4)$ and $k + \ell = 4$.
Thus $H_1 = \varphi(G_1)$ with $G_1 = (1,3)$. As a matter of fact, setting
$\mathcal F = \mathcal G = \{(1, i) : 2 \leq i \leq n\}$ gives equality in \eqref{eq:1.7}.

\section{Concluding remarks}
\label{sec:6}

In the present paper we considered the problem of determining the maximum of $|\mathcal F| + |\mathcal G|$ for families $\mathcal F, \mathcal G \subset {[n]\choose k}$ where $\mathcal F$ and $\mathcal G$ are cross-intersecting and non-trivial.

Recently, in a joint paper with Jian Wang \cite{FW} we proved the following result concerning the product $|\mathcal F||\mathcal G|$.

\begin{theorem}[\cite{FW}]
\label{th:6.1}
Let $k \geq 6$, $n > 9k$ and suppose that $\mathcal F, \mathcal G \subset {[n]\choose k}$ are cross-intersecting and non-trivial.
Then
\beq
\label{eq:6.1}
|\mathcal F||\mathcal G| \leq \left({n - 1\choose k - 1} - {n - k - 1\choose k - 1} + 1\right)^2.
\eeq
\end{theorem}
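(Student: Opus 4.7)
The plan is to reduce to initial families by simultaneous shifting (ad extremis, as developed in Section \ref{sec:4}), which preserves both sizes (hence the product), the cross-intersection property, and the non-triviality of $\mathcal F$ and $\mathcal G$. After this reduction, for each pair $1 \leq i < j \leq n$, either the shift leaves both families fixed or the shift would collapse one family to a star, at which point the structural conclusions of Fact \ref{fact:2.1} apply (all members of that family meet $\{i,j\}$, and its trace on $i$ and on $j$ are disjoint).

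Next, I would dispose of the easy case: if both $\mathcal F$ and $\mathcal G$ happen to be intersecting, then, being non-trivial, each satisfies the Hilton--Milner bound (Theorem \ref{th:1.2}), namely $|\mathcal F|, |\mathcal G| \leq {n - 1 \choose k - 1} - {n - k - 1 \choose k - 1} + 1$, and the product bound \eqref{eq:6.1} is immediate. Uniqueness for $k \geq 6$ then follows from the known uniqueness in Theorem \ref{th:1.2} (note that $k \geq 6$ rules out the triangle family $\mathcal T(n,3)$).

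The heart of the proof is the remaining case where, say, $\mathcal G$ is not intersecting, so $\mathcal G$ contains disjoint $G_1, G_2$. Then every $F \in \mathcal F$ meets both, giving $|\mathcal F| \leq h(n,k) := {n \choose k} - 2{n - k \choose k} + {n - 2k \choose k}$, and by Theorem \ref{th:1.5} the sum bound $|\mathcal F| + |\mathcal G| \leq h(n,k) + 2$ is available. The obstacle is that AM--GM on this sum is asymptotically far weaker than the square of the Hilton--Milner quantity, so one must exploit the asymmetry of the situation: two disjoint members of $\mathcal G$ squeeze the $F$'s into meeting a pair of fixed $k$-sets, and further, the matching number $\nu(\mathcal G)$ itself controls $|\mathcal G|$ from above via an Erd\H{o}s-matching-style argument.

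The main technical obstacle, which I would attack by induction on $n$, is to establish that in this non-intersecting case one actually has $|\mathcal G|$ bounded roughly by ${n - 2 \choose k - 2}$ (one order in $k$ below Hilton--Milner in leading constant), so that $|\mathcal F| \cdot |\mathcal G| \leq h(n,k) \cdot O\!\left({n - 2 \choose k - 2}\right)$ is much smaller than the square of the Hilton--Milner bound. The regime $n > 9k$ and $k \geq 6$ is what makes this quantitative bound go through cleanly; the inductive step would peel off a vertex using the shadow inequality Lemma \ref{lem:2.4} together with the monotonicity Lemma \ref{lem:2.5}, and the base case $n = 9k+1$ can be handled by direct counting. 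I do not expect the reduction to initial families to be the sticking point; rather, the sharp quantitative bound on $|\mathcal G|$ in the non-intersecting case, together with matching the extremizers back to $\mathcal H(n,k)$, is where the real work lies.
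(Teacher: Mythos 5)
This theorem is stated in the paper only as a citation of the forthcoming reference \cite{FW}; the paper itself provides no proof of it. So there is no internal proof to compare against, and your proposal has to be judged on its own terms.

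On those terms there is a genuine gap, and not a small one. The easy subcase you dispose of (both $\mathcal F$ and $\mathcal G$ intersecting, apply Theorem~\ref{th:1.2} to each) is fine, and shifting ad extremis is indeed available, but the plan for the remaining case does not work as stated. You claim that when $\mathcal G$ is not intersecting one should be able to get $|\mathcal G|$ down to roughly ${n-2 \choose k-2}$. That is simply false as a blanket claim: take $\mathcal F = \{A_1, A_2\}$ with $A_1, A_2$ disjoint $k$-sets and $\mathcal G$ the family of all $k$-sets meeting both of them; then $\mathcal G$ is cross-intersecting with $\mathcal F$, non-intersecting for $n$ large, and $|\mathcal G| = h(n,k) - 2 \approx k^2 {n-2 \choose k-2}$, which is a factor of $k^2$ larger than what you need. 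The product is still tiny in this example, but only because $|\mathcal F| = 2$; what is actually true is a trade-off between $|\mathcal F|$ and $|\mathcal G|$, not a uniform $O\!\left({n-2 \choose k-2}\right)$ bound on $|\mathcal G|$. Your plan neither states nor proves this trade-off. Moreover, the tool you invoke for it, an ``Erdős-matching-style argument'' controlling $|\mathcal G|$ via $\nu(\mathcal G)$, goes in the wrong direction: all you know is $\nu(\mathcal G) \geq 2$, and matching-number bounds give an upper bound that increases with $\nu$, so from $\nu(\mathcal G) \geq 2$ you learn nothing. The constraints on $|\mathcal G|$ have to come from the cross-intersecting interaction with a nontrivial $\mathcal F$, and that argument is the entire content of \cite{FW}; your sketch flags it but does not supply it. Finally, none of your steps use $n > 9k$ or $k \geq 6$ in any concrete way, whereas these thresholds signal delicate estimates far beyond Hilton--Milner squared or $h(n,k)$ times a crude bound, so anything that does not naturally produce such a range is unlikely to be the right skeleton.
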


Note that in the above range \eqref{eq:6.1} implies \eqref{eq:1.2}, that is, the Hilton--Milner Theorem.

Note that \eqref{eq:6.1} can be proved easily for $k = 2$, $n \geq 4$.
However, we do not know whether it holds for all $(n, k)$ satisfying $n \geq 2k \geq 4$.

\frenchspacing

\end{document}